\newtheorem{theorem}{Theorem}[section]
\newtheorem{lemma}[theorem]{Lemma}
\newtheorem{corollary}[theorem]{Corollary}
\newtheorem{proposition}[theorem]{Proposition}
\newtheorem{sublemma}[theorem]{Sublemma}
\newtheorem{conjecture}[theorem]{Conjecture}
\theoremstyle{definition}
\newtheorem{definition}[theorem]{Definition}
\newtheorem{example}[theorem]{Example}
\newtheorem{question}[theorem]{Question}
\theoremstyle{remark}
\newtheorem{remark}[theorem]{Remark}
\numberwithin{equation}{section}
\definecolor{gray}{rgb}{.5,.5,.5}
\definecolor{black}{rgb}{0,0,0}
\definecolor{blue}{rgb}{0,0,1}
\definecolor{red}{rgb}{1,0,0}
\definecolor{green}{rgb}{0,1,0}
\definecolor{yellow}{rgb}{1,1,.4}
\begin{document}

\title{Kauffman's clock lattice as a graph of perfect matchings:  a formula for its height}

\author{Moshe Cohen}
\address{Department of Mathematics and Computer Science, Bar-Ilan University, Ramat Gan 52900, Israel}
\email{cohenm10@macs.biu.ac.il}

\author{Mina Teicher}
\address{Department of Mathematics and Computer Science, Bar-Ilan University, Ramat Gan 52900, Israel}
\email{teicher@macs.biu.ac.il}

\date{August 2012}

\begin{abstract}
We give an algorithmic computation for the height of Kauffman's clock lattice obtained from a knot diagram with two adjacent regions starred and without crossing information specified.
We show that this lattice is more familiarly the graph 
 of perfect matchings of a bipartite graph 
  obtained from the knot diagram by overlaying the two dual Tait graphs 
 of the knot diagram.  
%
This setting also makes evident applications to Chebyshev or harmonic knots, whose related bipartite graph is the popular grid graph, and to discrete Morse functions.  Furthermore we prove structural properties of the bipartite graph in general.
\end{abstract}

\maketitle





\section{Introduction}
\label{sec:Intro}


There is a bijection between the set of all knot (and link) diagrams and the set of all signed plane graphs $G$.  Spanning tree expansions of $G$ have been used to produce several models of use in knot theory:  Kauffman \cite{Kauff} gives one for the Alexander polynomial; Thistlethwaite \cite{Th} for the Jones polynomial (related to work on the signed Tutte polynomial by Kauffman \cite{Kauff:sign} and extended to work on the Bollob{\'a}s-Riordan-Tutte polynomial by Dasbach, Futer, Kalfagianni, Lin, and Stoltzfus \cite{DaFuKaLiSt}); Greene \cite{Greene:tr} (of a different flavor) for the Heegaard Floer homology of the branched double cover of a knot; Ozsv{\'a}th and Szab{\'o} \cite{OzSz:tr} and Baldwin and Levine \cite{BalLev} (of this different flavor; analogous to different unpublished work by Ozsv{\'a}th and Szab{\'o}) give some for knot Floer homology;  and Wehrli \cite{We:sp}, Champarnekar and Kofman \cite{ChKo:sp} (independently), and Roberts \cite{Rob:tr} (of this different flavor; see also remarks by Jaeger \cite{Jag:tr}) for Khovanov homology.

There is a another bijection between the set of (rooted) spanning trees (or arborescences) of a plane graph $G$ and the set of perfect matchings (or dimer coverings) of a related plane bipartite graph $\Gamma$ that has been explored in previous work by the first author \cite{Co:jones} and the first author with Dasbach and Russell \cite{CoDaRu}, as well as in work by Kenyon, Propp, and Wilson \cite{KenPropp}, who say about this bijection:
\begin{quotation}
This theorem, along with its proof, is a generalization of a result of Temperley (1974) which is discussed in problem 4.30 of \cite[pp. 34, 104, 243-244]{Lov}.  
The unweighted undirected generalization was independently discovered by Burton and Pemantle \cite{BurPem}, 
who applied it to infinite graphs, and also by F. Y. Wu, who included it in lecture notes for a course.
\end{quotation}
The related graph $\widehat{\Gamma}$ also appeared in work by Huggett, Moffatt, and Virdee \cite{HugVir}.  The graph $\Gamma$ is currently being studied by Kravchenko and Polyak \cite{KraPol} for knots on a torus in relation to cluster algebras.  
Dimers themselves have been studied extensively, as well;  see for example Kenyon's lecture notes \cite{Ken} on the subject.  

By the end of this present paper the authors hope that the reader will prefer the perfect matching model for $\Gamma$ below to the spanning tree model for $G$.  In support of this we offer evidence that previous work in knot theory can be translated into concepts that are more regularly studied by graph theorists.

The primary example of this considered below is Kauffman's clock lattice $\mathcal{L}$ \cite{Kauff}, which we translate into the graph $\mathcal{G}$ of perfect matchings of the plane bipartite graph $\Gamma$.  This perspective offers beneficial insight to both sides:  well-studied combinatorial tools can now be applied to knots, and some basic topological structure makes $\mathcal{G}$ easier to understand by directing its edges.  
  In short this analogy allows for the height of the lattice to be seen as the diameter of $\mathcal{G}$, a topic of interest in work by Hernando, Hurtado, and Noy \cite{HerHurNoy} and Athanasiadis and Roichman \cite{AthYuv}.

The work in this present paper is done for knot projections without crossing information; this corresponds to the unweighted graph $\Gamma$.  One may obtain this crossing information by weighting the graph in one of several different ways, including Kauffman's ``black and white holes'' and the first author's previous work on this subject.  It is currently unclear to to the authors whether there is one weighting that is more useful than all the others in every context.

\bigskip

\textbf{Results.}  The main result of this paper, Theorem \ref{conj:sum}, states that this height can be computed combinatorially from the graph $\Gamma$ by counting the number of (square) faces within certain cycles $\{C_i\}$ that emerge in the discussion below.  Specifically these cycles are constructed in Theorem \ref{prop:concentriccircles} and arise from the unique minimum and unique maximum elements in the clock lattice $\mathcal{L}$ in Theorem \ref{thm:clocked}.

Subection \ref{subsec:harmonic} gives an application of the main result to grid graphs, which appear often in graph theory literature.  These are actually the balanced overlaid Tait graphs for \emph{harmonic knots}.



Conjecturally, the authors of the present paper believe there is a relationship between the number of cycles of $\{C_i\}$ and the minimum number of local maxima in this particular knot diagram.  Of course this provides an upper bound on the bridge number of the knot.  This relationship is motivated by Abe's two main results in \cite{Abe}, discussed at the end of Section \ref{sec:PerfectMatchingGraph}, and reaffirmed by new work on harmonic knots.


Lastly, Subsection \ref{subsec:DMT} and specifically Proposition \ref{prop:DMT} establishes the correspondence between perfect matchings of $\Gamma$ and discrete Morse functions on a 2-complex of the 2-sphere whose 1-skeleton is the (unsigned) plane graph $G$ with a pair of specifically chosen critical cells.


\bigskip

\textbf{Organization.}  The next Section \ref{sec:BOTG} gives the construction for the balanced overlaid Tait graph $\Gamma$ and introduces the Periphery Proposition \ref{prop:PropertyK} which appears several times throughout the paper.  The graph $\mathcal{G}$ of perfect matchings of $\Gamma$ is defined in Section \ref{sec:PerfectMatchingGraph}.  Some useful notions from graph theory are discussed in Section \ref{sec:graphtheory}:  connectivity and elementary graphs.

The main results begin in Section \ref{sec:partition} with some operations that are used in the main construction Theorem \ref{prop:concentriccircles}.  Further structural properties like \emph{leaves}, \emph{accordions}, and \emph{party hats} are discussed in Section \ref{sec:2valent} (specifically Theorem \ref{thm:blackvertices}), and some reduction moves are introduced to simplify $\Gamma$.  The proof of the Main Theorem \ref{conj:sum} in Section \ref{sec:graphofpm} is split into several lemmas based on these moves.

Finally several examples are discussed in Section \ref{sec:examples}, including a subsection on harmonic knots and a subsection on discrete Morse theory.

\bigskip

\textbf{Acknowledgements.} The first author was partially supported by the Oswald Veblen Fund and by the Minerva Foundation of Germany.  Inspiration for this project arose from three places:  the Combinatorics group at Bar-Ilan University and specifically a seminar talk by Roy Ben-Ari on part of his Masters of Science thesis \cite{BenAri} under the supervision of Ron Adin and Yuval Roichman; a preprint \cite{Abe} by Yukiko Abe of Tokyo Institute of Technology containing some results of her Masters thesis; and the first author's graduate work \cite{CoDaRu, Co:jones} at Louisiana State University on the balanced overlaid Tait graph together with his familiarity with Kauffman's clock lattice.  The first author would also like to thank LSU VIGRE for sponsoring the first Baton Rouge Young Topologists Research Retreat in January 2012 whose central theme was the graph $\Gamma$ discussed below, Kate Kearney with whom he co-organized the workshop, and Cody Armond who contributed to many helpful conversations throughout.

\section{The balanced overlaid Tait graph}
\label{sec:BOTG}

A \emph{knot} $K$ is a circle $S^1$ embedded in $S^3=\mathbb{R}^3\cup\{\infty\}$.  A \emph{link} is the embedding of several copies of $S^1$.  A knot or link \emph{diagram} $D$ is the projection of the knot or link onto $\mathbb{R}^2$ with under- and over-crossing information.  A theorem by Reidemeister in 1926 (see for example \cite{Lic}) states that two diagrams represent the same knot if and only if there is a sequence of the three Reidemeister moves taking one diagram to the other.

The knot diagram considered without crossing information is a 4-regular plane graph called the \emph{projection graph} (or the \emph{universe} $U$ according to Kauffman \cite{Kauff}).  By Euler's formula, there are two more faces than vertices; Kauffman chooses two adjacent faces to omit and marks these $*$ with stars.  He then considers \emph{states}:  bijections between the set of vertices and the set of all un-starred faces.  The state itself is depicted by placing markers at a corner of each crossing and in each face.


Ultimately, Kauffman uses the states on a universe to produce the Alexander polynomial $\Delta_K(t)$ of the knot $K$, which since its finding in 1923 has remained one of the most important classical knot invariants.  It is precisely due to the Alexander module that the two starred faces must be adjacent.  There are at least sixteen equivalent definitions of the Alexander polynomial, some of this redundancy owing perhaps to the different ways in which we can define Kauffman's states.  One such way involving the construction below can be found in \cite{CoDaRu}.

A state can be realized as a rooted spanning tree of a plane graph $G$ obtained from a diagram together with the complementary rooted spanning tree of the plane dual $G^*$ to this graph.  One can obtain this signed \emph{Tait graph} $G$ from a diagram by checkerboard-coloring its regions, taking the black regions to be the vertices, and taking signed edges corresponding to the crossings as in Figure  \ref{fig:SignedTaitGraphSignsTIKZ}.

\begin{figure}[h]
\begin{center}

\begin{tikzpicture}
	\fill[gray!50!white] (0,0) -- +(.5,.5) -- +(1,0) -- cycle;
	\fill[gray!50!white] (0,1) -- +(.5,-.5) -- +(1,0) -- cycle;
	\draw[-] (1,0) -- +(-1,1);
	\fill[color=white] (.4,.4) rectangle +(.2,.2);
	\draw[-] (0,0) -- +(1,1);

	\fill[gray!50!white] (2,0) -- +(.5,.5) -- +(1,0) -- cycle;
	\fill[gray!50!white] (2,1) -- +(.5,-.5) -- +(1,0) -- cycle;
	\draw[-] (2,0) -- +(1,1);
	\fill[color=white] (2.4,.4) rectangle +(.2,.2);
	\draw[-] (3,0) -- +(-1,1);

	\draw (-1.5,.5) node {positive};
	\draw (4.5,.5) node {negative};
	
\end{tikzpicture}
	\caption{Crossings determine the sign of the edges in the signed Tait graph.}
	\label{fig:SignedTaitGraphSignsTIKZ}
\end{center}
\end{figure}
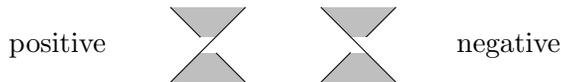

Overlaying $G$ with its plane dual $G^*$ (which is the Tait graph corresponding to the white regions), one obtains the \emph{overlaid Tait graph} $\widehat{\Gamma}$.  This graph is bipartite:  its black vertex set corresponds to the intersections of an edge of $G$ with its dual edge in $G^*$, and its white vertex set corresponds to the vertices of both the Tait graph $G$ and its plane dual $G^*$.  That is, $V(\widehat{\Gamma})=[E(G)\cap E(G^*)]\sqcup [V(G)\sqcup V(G^*)]$.  The edges of this graph are the half-edges of both Tait graphs.  A similar notion is found in work by Huggett, Moffatt, and Virdee \cite{HugVir}.

All of the black vertices of $\widehat{\Gamma}$ are four-valent, as these correspond with vertices of the universe $U$, and all of the faces of $\widehat{\Gamma}$ are square, as these correspond to edges of the universe $U$, as in Fig. \ref{fig:SquareFace}.

\begin{figure}[h]
\begin{center}
\begin{pspicture}(0,0)(4,2)

\pcline[linewidth=1 pt, linecolor=lightgray]{->}(0,1)(4,1)
\pcline[linewidth=1 pt, linecolor=lightgray]{-}(1,0)(1,2)
\pcline[linewidth=1 pt, linecolor=lightgray]{-}(3,0)(3,2)

\pscircle[linewidth=1pt, linecolor=black, fillstyle=solid, fillcolor=black](1,1){.15}
\pscircle[linewidth=1pt, linecolor=black, fillstyle=solid, fillcolor=black](3,1){.15}

\pcline[linewidth=1 pt]{-}(1,1)(2,0)
\pcline[linewidth=1 pt]{-}(3,1)(2,0)
\pcline[linewidth=1 pt]{-}(1,1)(2,2)
\pcline[linewidth=1 pt]{-}(3,1)(2,2)

\pscircle[linewidth=1pt, linecolor=black, fillstyle=solid](2,0){.15}
\pscircle[linewidth=1pt, linecolor=black, fillstyle=solid](2,2){.15}

\end{pspicture}
	\caption{A square face of the overlaid Tait graph $\widehat{\Gamma}$.}
	\label{fig:SquareFace}
\end{center}
\end{figure}
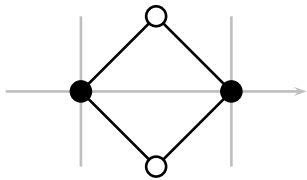

In order to consider perfect matchings, delete the two starred white vertices to obtain the \emph{balanced overlaid Tait graph} $\Gamma$ that is the central graph for the present paper.  For more details of this construction see \cite{Co:jones} 
 or consider the following alternative definition.

\begin{definition}
The \emph{balanced overlaid Tait graph} $\Gamma$ is a bipartite graph that can be obtained from a universe $U$ as follows.  Let every four-valent vertex in the universe $U$ be a black vertex in $\Gamma$.  Select two adjacent faces of $U$ and mark them $*$ by stars.  Let every non-starred face of $U$ be a white vertex in $\Gamma$.  A black vertex is adjcent to a white vertex whenever the vertex and face of $U$ are incident.
\end{definition}

\begin{remark}
Since $U$ is a plane graph, so is $\Gamma$.  Furthermore, all faces of $\Gamma$ are square except for the infinite face.  Let the boundary cycle of this infinite face be called the \emph{periphery}.  All black vertices not on the periphery are still four-valent.  The remaining black vertices (exactly those affected by the deletion of the two starred white vertices in $\widehat{\Gamma}$) have valency determined by the following Periphery Proposition.
\end{remark}


A crossing is called \emph{nugatory} if there exists a circle in the projection plane meeting the diagram transversely at that crossing but not meeting the diagram at any other point. Specifically it can be easily removed by twisting some part of the diagram.

\begin{proposition}
\label{prop:PropertyK}
\textbf{Periphery Proposition.}  
The balanced overlaid Tait graph $\Gamma$ for a diagram with no nugatory crossings has the following property:  two of the black vertices on the periphery have valence two; the rest have valence three.
\end{proposition}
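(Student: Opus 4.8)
The plan is to compute the valence of each black vertex of $\Gamma$ by comparing it with its valence in $\widehat{\Gamma}$, where every black vertex is four-valent. Passing from $\widehat{\Gamma}$ to $\Gamma$ deletes the two starred white vertices $F_1^*$ and $F_2^*$, so a black vertex sitting at a crossing $v$ of the universe $U$ loses exactly one incident edge for each corner of $v$ that lies in a starred face. Thus its valence in $\Gamma$ equals $4$ minus the number of corners of $v$ contained in $F_1^*\cup F_2^*$, and the black vertices lying on the periphery are precisely those that lose at least one edge, i.e.\ those incident to a starred face. It therefore suffices to show that this corner count equals $2$ for exactly two crossings and equals $1$ for every other crossing incident to a starred face.

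First I would translate the hypothesis ``no nugatory crossings'' into a statement about corners. A crossing is nugatory exactly when the corresponding edge of the Tait graph $G$ is a loop or a bridge, equivalently a loop of $G$ or of $G^*$; in either case a pair of \emph{opposite} (same-colour) corners at that crossing belong to a single face. Since adjacent corners at a crossing always have opposite checkerboard colours and hence lie in distinct faces, the absence of nugatory crossings is equivalent to the four faces around every crossing being pairwise distinct. Consequently each crossing has at most one corner in any given face, so a black vertex loses at most one edge to each of $F_1^*$ and $F_2^*$ and its valence in $\Gamma$ is at least two.

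Next I would pin down the crossings meeting both starred faces. The two starred faces are adjacent across a single edge $e$ of $U$; since $e$ separates them they receive opposite checkerboard colours, and since a loop at a crossing would be a monogon (a nugatory kink) the two endpoints $v_1,v_2$ of $e$ are distinct. At each of $v_1,v_2$ the two corners flanking $e$ lie one in $F_1^*$ and one in $F_2^*$, and by the previous paragraph there are no further starred corners there, so $v_1$ and $v_2$ each lose exactly two edges and become two-valent. For any other crossing $v$ incident to both $F_1^*$ and $F_2^*$, its corner in $F_1^*$ and its corner in $F_2^*$ have opposite colours, hence are adjacent and are separated by an edge of $U$ bordering both starred faces; as $e$ is the only such edge, $v\in\{v_1,v_2\}$, a contradiction. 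Every remaining periphery crossing therefore meets exactly one starred face in exactly one corner, loses exactly one edge, and is three-valent, which is the desired conclusion.

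The main obstacle is the local bookkeeping at the shared edge and, in particular, the verification that no crossing besides $v_1$ and $v_2$ touches both starred faces; this is where both uses of the no-nugatory hypothesis are essential, first to forbid a repeated corner (a pinched face) and second, together with the assumption that the starred faces meet along a single edge, to forbid a second separating edge that would create a third and fourth two-valent vertex. Making the ``at most one corner per face'' and ``single separating edge'' statements precise, and confirming that exactly the edge-losing black vertices constitute the periphery, are the steps I expect to require the most care.
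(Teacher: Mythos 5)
Your proof is correct and reaches the conclusion by a genuinely different route from the paper's. The paper first rules out periphery black vertices of valence one (Lemma \ref{lem:noblackleaves}) and valence four (Lemma \ref{lem:nofourvalentblack}), and then obtains $n_2=2$ from a global count: Euler's formula combined with summing degrees over the black vertices and over the faces. You instead argue locally at each crossing: the valence in $\Gamma$ is four minus the number of corners lying in starred faces, the no-nugatory hypothesis forces at most one corner of any crossing in any single face, and the adjacency of the two starred faces along an edge $e$ identifies the two-valent vertices as precisely the endpoints of $e$. Your version buys more in one direction: it locates the two two-valent vertices explicitly (which is exactly what the reconstruction in Proposition \ref{prop:UniqueUniverse} relies on), whereas the paper's count only establishes how many there are. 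The paper's counting argument, on the other hand, transfers essentially verbatim to the interior cycles $C_i$ in Lemma \ref{lem:cyclesPropertyK}, where there is no longer a pair of starred faces of $U$ to point to. One caveat applies equally to both arguments: your step ``as $e$ is the only such edge'' assumes the two starred faces share exactly one edge of $U$. This is automatic for prime-like diagrams but can fail otherwise --- starring the two merged faces of a connected-sum diagram gives two shared edges, four two-valent black vertices, and a disconnected $\Gamma$ --- and the paper's proof silently needs the same restriction, since its appeal to Euler's formula presupposes $\Gamma$ connected. Since you flag the single-shared-edge hypothesis explicitly, this is an imprecision in the statement being proved rather than a gap in your argument.
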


To make this more obvious we employ two lemmas to show that there can be no black vertices on the periphery of degree one or four.

\begin{lemma}
\label{lem:noblackleaves}
The balanced overlaid Tait graph $\Gamma$ for a diagram with no nugatory crossings has no black leaves.
\end{lemma}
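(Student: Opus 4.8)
The plan is to track exactly which of the four edges at a black vertex survive the deletion of the two starred white vertices, and to show that at most two of them are ever deleted. Recall from the construction that a black vertex of $\Gamma$ corresponds to a four-valent vertex $v$ of the universe $U$, and that in the overlaid graph $\widehat{\Gamma}$ before deletion it carries four half-edges, one for each corner at $v$, joining it to the four faces $A,B,C,D$ of $U$ met in cyclic order around $v$. Under the checkerboard coloring that produces the Tait graph $G$ and its dual $G^*$, these four faces alternate color, so that $A,C$ are the two black regions incident to $v$ (the endpoints of the edge of $G$ through $v$) and $B,D$ are the two white regions (the endpoints of the dual edge of $G^*$).

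First I would establish the key local fact: if the diagram has no nugatory crossing at $v$, then the faces $A,B,C,D$ are pairwise distinct. Because adjacent corners carry opposite colors, one automatically has $A\neq B$, $B\neq C$, $C\neq D$, and $D\neq A$; the only possible coincidences are between the two same-colored opposite corners, namely $A=C$ or $B=D$. But $A=C$ means the edge of $G$ through $v$ is a loop, and $B=D$ means the edge of $G^*$ through $v$ is a loop, equivalently that the edge of $G$ is a bridge. In either case one can draw a simple closed curve that passes through $v$, runs through the single shared region, and meets the diagram nowhere else, exhibiting $v$ as a nugatory crossing. Under the hypothesis no such curve exists, so $A,B,C,D$ are four distinct white vertices of $\widehat{\Gamma}$.

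The counting then finishes the argument. The graph $\Gamma$ is obtained from $\widehat{\Gamma}$ by deleting exactly two white vertices, the two starred faces. The four edges at our black vertex lead to the four distinct white vertices $A,B,C,D$, so at most two of these edges are removed by the deletion, at most one for each of the two starred faces that happens to occur among $A,B,C,D$. Hence the black vertex retains valence at least $4-2=2$, and in particular it is never a leaf. Since $v$ was an arbitrary four-valent vertex of $U$, the graph $\Gamma$ has no black leaves.

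I expect the only genuine obstacle to be the careful justification of the local fact in the middle paragraph, specifically turning a coincidence of two opposite faces into an explicit nugatory circle and, conversely, reading a nugatory circle as such a coincidence. This requires being precise about the definition of transverse intersection at a crossing and about the fact that a curve confined to a single region of the diagram, closed up through the crossing, meets the diagram only there. The remaining valence bookkeeping is routine, and it already points toward the Periphery Proposition, since the surviving valence of each black vertex is precisely $4$ minus the number of starred faces appearing among $A,B,C,D$.
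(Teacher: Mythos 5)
Your proof is correct and is essentially the paper's argument read in contrapositive form: both hinge on the observation that two corners of a crossing can lie in the same face only if they are opposite corners, which forces a nugatory crossing, and then count how many of the four edges at a black vertex can be lost when the two starred white vertices are deleted. The paper runs this as a pigeonhole contradiction (a leaf would require three deleted edges hitting only two deleted white vertices), while you argue directly that the four incident faces are distinct so at most two edges are deleted; your version has the mild bonus of already establishing valence at least two, which the paper only extracts later in the Periphery Proposition.
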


\begin{proof}
Suppose by way of contradiction that there is a black leaf in $\Gamma$.  This black vertex must be four-valent in the overlaid Tait graph $\widehat{\Gamma}$, and so it has three additional edges.  Only two white vertices were deleted from $\widehat{\Gamma}$, and so at least two of these three edges must be incident with the same white vertex.  However the black vertex corresponds to a crossing in the diagram, and so only opposite edges can be incident with the same white vertex.  This results in a nugatory crossing, a contradiction.
\end{proof}

\begin{lemma}
\label{lem:nofourvalentblack}
The balanced overlaid Tait graph $\Gamma$ for a diagram with no nugatory crossings has no four-valent black vertex on the periphery.
\end{lemma}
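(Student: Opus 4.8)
The plan is to argue by contradiction: assume some black vertex $v$ is four-valent in $\Gamma$ yet lies on the periphery, and then show that $v$ is in fact an interior vertex. The key is to track exactly which faces of $\widehat{\Gamma}$ are affected when the two starred white vertices are deleted.

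First I would record how valence changes under the deletion. The vertex $v$ corresponds to a crossing of $U$, and its four incident white vertices in $\widehat{\Gamma}$ are the four faces of $U$ meeting that crossing. Since $\Gamma$ is obtained from $\widehat{\Gamma}$ only by deleting the two starred white vertices together with their incident edges, the valence of $v$ drops by exactly the number of its white neighbors that are starred. Hence $v$ being four-valent in $\Gamma$ forces \emph{none} of its four white neighbors to be starred.

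Next I would examine the four faces of $\widehat{\Gamma}$ incident to $v$. Each is a square (all faces of $\widehat{\Gamma}$ are square, as in Figure \ref{fig:SquareFace}), and in such a square the two white corners are adjacent to $v$ while the single corner opposite $v$ is a black vertex, namely a neighboring crossing. Running cyclically around $v$, the white corners occurring in these four squares are precisely the four white neighbors of $v$. Because none of these is starred, no square incident to $v$ loses a corner, and every bounding edge of each such square survives in $\Gamma$ (the two half-edges at $v$, and the two half-edges at the surviving white corners, since an edge disappears only when one of its endpoints is deleted). Thus all four squares persist as bounded square faces of $\Gamma$.

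Finally, since the four faces of $\Gamma$ incident to $v$ are exactly these four bounded squares, the infinite face is not among them, so $v$ is not incident to the infinite face and hence not on the periphery, contradicting the assumption. The only delicate point is the bookkeeping of the third step: one must confirm that a square incident to $v$ can merge into the infinite face only by losing a corner, that its only deletable corners are white ones (black corners are never starred), and that both of those white corners are neighbors of $v$ (the diagonal corner being black). Granting this, the conclusion is immediate. As an aside, the same bookkeeping gives the converse—a black vertex adjacent to a starred vertex does lie on the periphery—which together with Lemma \ref{lem:noblackleaves} yields the valence-two-or-three dichotomy asserted in the Periphery Proposition.
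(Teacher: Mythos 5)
Your proof is correct, but it follows a genuinely different route from the paper's. The paper assumes $v_1$ is four-valent \emph{and} on the periphery, looks only at the one corner of $v_1$ facing the infinite face, observes that the square $u_1v_1u_2v_2$ of $\widehat{\Gamma}$ spanning that corner survives (because $v_2$ is black and is never deleted), concludes that the periphery must then be exactly that single square, and finally kills this degenerate case with an Euler-characteristic count ($2|E|=4|F|$ and $|E|=4n$ force $|F|=2n$ and hence $n+2$ white vertices, contradicting balancedness). You instead argue locally at all four corners: four-valence in $\Gamma$ means no white neighbor of $v$ was starred, so all four incident squares of $\widehat{\Gamma}$ keep their full boundaries and persist as faces of $\Gamma$, whence $v$ is not even incident to the infinite face. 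Your version buys several things: it proves the stronger statement that a four-valent black vertex is an interior vertex of $\Gamma$; it avoids the global counting step and does not use the balanced condition at all; and, as you note, the same bookkeeping characterizes the periphery as exactly the black vertices adjacent to a starred vertex. What it costs is a small reliance on the convention (implicit in the paper's remark that all faces of $\Gamma$ are square except the infinite one) that the infinite face of $\Gamma$ is the face created by the deletion of the two starred white vertices; granting that, your ``a face can only merge into the infinite face by losing a white corner, and the white corners of a square at $v$ are neighbors of $v$'' step is exactly right. The paper's route, by contrast, works directly from the periphery-cycle structure without needing to identify the infinite face with the merged region, at the price of the extra counting argument.
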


\begin{proof}
Suppose by way of contradiction that there is a four-valent black vertex $v_1$ on the periphery of $\Gamma$; then there are two white neighbors $u_1$ and $u_2$ of $v_1$ that are also neighbors of $v_1$ on the periphery.  Since all faces of the overlaid Tait graph $\widehat{\Gamma}$ are square, there must be some black vertex $v_2$ such that these vertices form a square face in $\widehat{\Gamma}$ that lives in the infinite face of $\Gamma$.  However, no black vertices were deleted from $\widehat{\Gamma}$ to obtain $\Gamma$, and so $v_2$ must be in $\Gamma$.

This can only be the case when the periphery of $\Gamma$ is itself the square with these four vertices.  Here every face of $\Gamma$ is a square, so by counting the edges around each face $2|E|=4|F|$.  Let $n$ be the number of black vertices; then by counting the edges around each black vertex, $|E|=4n$.  Together these facts give $|F|=2n$.

Since this graph is a plane graph, Euler's formula gives that the number of white vertices must be $n+2$, contradicting the condition that $\Gamma$ is balanced.  In fact, these properties describe $\widehat{\Gamma}$.
\end{proof}

\begin{proof}[Proof of the Periphery Proposition \ref{prop:PropertyK}]
Let $n_i$ be the number of black vertices with valency $i$; by construction $i\leq 4$ and by Lemma \ref{lem:noblackleaves} $i\geq2$.  Since the graph $\Gamma$ is balanced, $|V|=2(n_2+n_3+n_4)$.  Summing the edges around each black vertex we obtain $|E|=\sum in_i=2n_2+3n_3+4n_4$.  Since this is a plane graph, Euler's formula gives $|F|=2-|V|+|E|=2-2n_2-2n_3-2n_4+2n_2+3n_3+4n_4=2+n_3+2n_4$.

By Lemma \ref{lem:nofourvalentblack} there are no four-valent black vertices on the periphery; then its length is $2(n_2+n_3)$.  Summing the edges around each face we obtain $2|E|=\sum if_i=4(|F|-1)+(2(n_2+n_3))(1)=4|F|-4+2n_2+2n_3$.  Substituting for $|F|$ as above, we obtain $n_2=2$.
\end{proof}

\begin{remark}
Although the Periphery Proposition \ref{prop:PropertyK} appears itself to be a slightly unnatural restriction, it follows by the argument above that it comes directly from the more natural conditions of $\Gamma$ being plane bipartite and having black vertices of degree at most four.
\end{remark}

The balanced overlaid Tait graph $\Gamma$ completely determines the universe $U$.  Crossing information can be obtained by choosing a certain weighting on the graph.

\begin{proposition}
\label{prop:UniqueUniverse}
One can obtain a unique universe $U$ from the balanced overlaid Tait graph $\Gamma$.
\end{proposition}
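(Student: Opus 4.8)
The plan is to reverse the construction of $\Gamma$ from $U$, recovering first the vertices and then the edges of $U$, and to argue that no choice is ever made along the way, so that the result is forced. The black vertices of $\Gamma$ are declared to be the crossings of $U$; this is immediate, since by definition the black vertices of $\Gamma$ are exactly the four-valent vertices of the universe. It then remains to recover the edges of $U$ and to verify that the resulting plane graph is four-valent and unique. I would organize the edge recovery according to how many of the two flanking faces of an edge are starred, which is exactly the data that governs which square faces of $\widehat{\Gamma}$ survive the deletion.

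An edge of $U$ whose two adjacent faces are both non-starred corresponds to a finite square face of $\Gamma$, as in Figure \ref{fig:SquareFace}: being bipartite, such a face reads black--white--black--white around its boundary, and its two opposite black corners are precisely the two endpoints of that edge. Hence, joining the two black corners of each finite square face of $\Gamma$ recovers every edge of $U$ that borders no starred region. The edges still missing are exactly those on the boundaries of the two starred faces, whose squares were destroyed when the two starred white vertices were deleted; these must be read off the periphery.

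For the periphery I would invoke the Periphery Proposition \ref{prop:PropertyK}. Since $\Gamma$ is bipartite and the periphery bounds the infinite face, it is a closed walk alternating black and white. An edge of $U$ separating a starred face from a non-starred face $w$ appears along this walk as a black--white--black subpath $b,w,b'$: its destroyed square had the deleted starred vertex and $w$ as its two white corners, so removing the starred vertex leaves exactly this path. Thus, joining the two black vertices flanking each white vertex encountered along the periphery walk recovers every edge that borders exactly one starred region. The single remaining edge is the edge $e_0$ shared by the two adjacent starred faces; its square had \emph{both} starred vertices as white corners, so it leaves no white vertex on the periphery. Here the Periphery Proposition is essential: its two valence-two black vertices are exactly the two crossings incident to both starred faces, i.e.\ the endpoints of $e_0$, so $e_0$ is recovered by joining these two distinguished vertices.

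Finally I would check completeness and uniqueness together through an incidence count: every black vertex acquires exactly four edges --- four squares at an interior valence-four vertex; two squares and two periphery joins at a valence-three vertex; and one square, two periphery joins, and the edge $e_0$ at each valence-two vertex --- so the reconstructed $U$ is four-valent, as a universe must be. Uniqueness is then automatic, since each stage is dictated by the plane graph $\Gamma$ and the Periphery Proposition with no free choice. The main obstacle is the careful handling of the shared edge $e_0$: unlike every other missing edge it is not flanked by a surviving white vertex, and proving that the two valence-two vertices of the Periphery Proposition are forced to be its endpoints --- which relies on the two starred faces being adjacent and on the absence of nugatory crossings --- is the crux of the argument.
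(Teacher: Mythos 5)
Your proposal is correct and follows essentially the same route as the paper: reverse the construction, recover the edges of $U$ from the square faces of $\Gamma$, and use the Periphery Proposition \ref{prop:PropertyK} to show that the data destroyed by deleting the two starred white vertices (equivalently, the edges of $U$ bordering a starred face) is forced, with the two valence-two black vertices pinning down the edge shared by the two starred faces. The only cosmetic difference is that the paper first reconstructs $\widehat{\Gamma}$ by re-inserting the two white vertices in the unique planar way and then reads $U$ off its square faces, whereas you read the missing edges of $U$ directly off the periphery walk; the underlying forcing argument is the same.
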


\begin{proof}
Viewing the universe $U$ as a four-valent graph, the edges of this graph correspond to square faces of $\widehat{\Gamma}$, specifically traversing from black vertex to black vertex through the face.

Thus it is enough to show that one can uniquely produce the overlaid Tait graph $\widehat{\Gamma}$ from the balanced overlaid Tait graph $\Gamma$.

Identify the periphery with the unit circle such that the two black vertices on it that have valence two are at position $1$ and $-1$.  Add two new white vertices at $2i$ and $-2i$ that are adjacent to both of these black vertices.  Furthermore, the white vertex at $2i$ (or $-2i$) is adjacent to every black vertex on the upper hemisphere (or lower hemisphere, respectively) of the periphery. 

One can see this is unique because exactly two white vertices need to be added in a planar way adjacent to the two black two-valent vertices on the periphery, and the rest of the black vertices on the periphery are three-valent.
\end{proof}

Thus this is the graph $\Gamma$ that we will consider, given a specific diagram $D$ with two specified adjacent starred regions for a specific knot $K$.

\section{The graph of perfect matchings}
\label{sec:PerfectMatchingGraph}

We now construct the graph $\mathcal{G}$ of perfect matchings of the bipartite graph $\Gamma$ above.  Unless otherwise specified, we assume $\Gamma$ has the properties as mentioned above and can be obtained from a diagram $D$ of a knot $K$.

We formally take the vertices of $\mathcal{G}$ to be the perfect matchings of $\Gamma$, although the reader may choose to interpret these vertices instead as states of a universe $U$ as in \cite{Kauff} and \cite{Abe}.  An edge in $\mathcal{G}$ corresponds to a \emph{flip move} of perfect matchings, that is, where all but two of the edges of each perfect matching agree, and these four edges create a square face.

The reader may instead consider $\mathcal{G}$ as the \emph{clock lattice} $\mathcal{L}$ constructed in \cite{Kauff} and \cite{Abe}.  Here the edges are directed according to the \emph{clock move} as in Figure \ref{fig:clockmove}.  Given the square face from the flip move as a cycle in the plane oriented counterclockwise, the perfect matching whose edges on this oriented cycle go from white to black is the tail of the directed edge, and the one that goes from black to white is the head.

\begin{figure}[h]
\begin{center}
\begin{tikzpicture}

\draw[lightgray, shift={(0,3)}] (0,1) -- (3,1);
\draw[lightgray, shift={(0,3)}] (.5,0) -- (.5,2);
\draw[lightgray, shift={(0,3)}] (2.5,0) -- (2.5,2);

\draw[shift={(0,3)}] (4,1) node {$\rightarrow$};

\draw[lightgray, shift={(0,3)}] (5,1) -- (8,1);
\draw[lightgray, shift={(0,3)}] (5.5,0) -- (5.5,2);
\draw[lightgray, shift={(0,3)}] (7.5,0) -- (7.5,2);

\fill[color=black, shift={(0,3)}] (.5,1) -- (1,1) arc (0:90:.5) -- cycle;
\fill[color=black, shift={(0,3)}] (2.5,1) -- (2,1) arc (180:270:.5) -- cycle;
\fill[color=black, shift={(0,3)}] (5.5,1) -- (5.5,.5) arc (270:360:.5) -- cycle;
\fill[color=black, shift={(0,3)}] (7.5,1) -- (7,1) arc (180:90:.5) -- cycle;

\draw[lightgray] (0,1) -- (3,1);
\draw[lightgray] (.5,0) -- (.5,2);
\draw[lightgray] (2.5,0) -- (2.5,2);
\draw (.5,1) -- (1.5,2);
\draw[dashed] (1.5,2) -- (2.5,1);
\draw (2.5,1) -- (1.5,0);
\draw[dashed] (1.5,0) -- (.5,1);

\draw (4,1) node {$\rightarrow$};

\draw[lightgray] (5,1) -- (8,1);
\draw[lightgray] (5.5,0) -- (5.5,2);
\draw[lightgray] (7.5,0) -- (7.5,2);
\draw[dashed] (5.5,1) -- (6.5,2);
\draw (6.5,2) -- (7.5,1);
\draw[dashed] (7.5,1) -- (6.5,0);
\draw (6.5,0) -- (5.5,1);

	\fill[color=white] (1.5,2) circle (3pt);
	\draw (1.5,2) circle (3pt);

	\fill[color=black] (.5,1) circle (3pt);

	\fill[color=white] (1.5,0) circle (3pt);
	\draw (1.5,0) circle (3pt);

	\fill[color=black] (2.5,1) circle (3pt);

	\fill[color=white] (6.5,2) circle (3pt);
	\draw (6.5,2) circle (3pt);

	\fill[color=black] (5.5,1) circle (3pt);

	\fill[color=white] (6.5,0) circle (3pt);
	\draw (6.5,0) circle (3pt);

	\fill[color=black] (7.5,1) circle (3pt);

\end{tikzpicture}
	\caption{The clock move.}
	\label{fig:clockmove}
\end{center}
\end{figure}
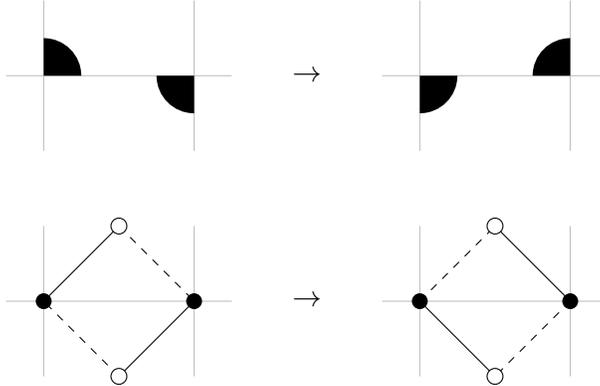

Kauffman proves the following for the clock lattice of a universe.

\begin{theorem}
\label{thm:clocklattice:Kauff}
\cite[Clock Theorem 2.5.]{Kauff}
Let $U$ be a universe and $\delta$ the set of states of $U$ for a given choice of adjacent fixed stars.  Then $\delta$ has a unique clocked state and a unique counterclocked state.  Any state in $\delta$ can be reached from the clocked (counterclocked) state by a series of clockwise (counterclockwise) moves.  Hence any two states in $\delta$ are connected by a series of state transpositions.
\end{theorem}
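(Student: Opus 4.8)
The plan is to recover all three assertions at once by exhibiting the clock lattice $\mathcal{L}$ as the Hasse diagram of a distributive lattice on the perfect matchings of $\Gamma$, built from a \emph{height function}. Fix a reference perfect matching $M_0$ of $\Gamma$, orient every edge of $\Gamma$ from its white endpoint to its black endpoint, and regard each matching as a $0/1$ flow on these oriented edges. For any matching $M$ the difference $M-M_0$ has zero divergence at every vertex (each of $M$ and $M_0$ contributes exactly one unit of outflow at each white vertex and one unit of inflow at each black vertex), so it is a circulation; equivalently $M\triangle M_0$ is a disjoint union of cycles alternating between $M$ and $M_0$. Integrating this circulation across the edges of $\Gamma$ assigns to each face an integer $h_M$, well defined once we normalize $h_M$ to vanish on the infinite face bounded by the periphery. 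Since by the earlier remark every bounded face of $\Gamma$ is a square, $h_M$ is an integer function on the squares of $\Gamma$.

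First I would record the local effect of a move. A flip move rotates the matching on a single square face $f$, replacing its two matched edges by the complementary pair; this alters the circulation only around $\partial f$, hence changes $h_M$ by $\pm1$ at the single face $f$ and leaves $h_M$ unchanged on every other face. Moreover the sign is dictated by the orientation of $\partial f$, so the two senses of the clock move in Figure \ref{fig:clockmove} correspond precisely to decrementing versus incrementing one coordinate of $h_M$. Thus clock moves are exactly the unit steps of the integer-valued function $h_M$.

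Next I would establish the lattice statement: the realizable height functions, ordered pointwise, form a distributive lattice whose meet and join are the coordinatewise minimum and maximum. The content is that $\min(h_M,h_{M'})$ and $\max(h_M,h_{M'})$ are again realized by genuine perfect matchings. This rests on the fact that the integer functions arising as height functions are cut out by \emph{local} admissibility conditions, one per face, constraining the height gradient around each square of $\Gamma$; conditions of this box/Lipschitz type are stable under coordinatewise $\min$ and $\max$, exactly as in Thurston's height-function description of domino tilings. Granting this, the lattice has a unique maximum and a unique minimum element, and by the first step these correspond to the unique clocked and unique counterclocked states asserted by the theorem.

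The remaining claims are then formal. In a finite distributive lattice every element is joined to the top by a saturated chain, each of whose covering relations is, by the first step, a single clockwise move; symmetrically every element is joined to the bottom by counterclockwise moves. This yields the reachability of any state from the clocked (respectively counterclocked) state. Connectivity of $\mathcal{G}$ by state transpositions then follows by routing two arbitrary states through the top element. I expect the genuine obstacle to be the lattice step, and specifically the verification that coordinatewise $\min$ and $\max$ preserve realizability; the cleanest route is to phrase the height-function constraints as local inequalities around each black and white vertex and to check their stability face by face, using the Periphery Proposition \ref{prop:PropertyK} to control the boundary squares created when the two starred white vertices were deleted.
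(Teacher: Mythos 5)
The first thing to say is that the paper does not prove Theorem \ref{thm:clocklattice:Kauff} at all: it is quoted from Kauffman, whose own proof is a direct combinatorial induction locating ``unclocked'' squares via trails. So your route is necessarily different from the source, and it is a legitimate, well-known alternative: it is Thurston's height-function method for dominoes as generalized by Propp and R\'emila to perfect matchings of plane bipartite graphs. Your setup is correct in its details — matchings as unit flows oriented white to black, differences of matchings as circulations, circulations on a plane graph as face potentials normalized on the outer face, flips as $\pm1$ changes at a single face, and realizability expressed by unit-length interval constraints on differences of adjacent face heights, which are indeed preserved by coordinatewise $\min$ and $\max$. What this buys over Kauffman's argument is a clean explanation of \emph{why} a distributive lattice appears at all, which is exactly the structural point the surrounding paper wants.

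The gap is the sentence ``The remaining claims are then formal.'' Closure of the realizable height functions under coordinatewise $\min$ and $\max$ gives a unique pointwise-largest and pointwise-smallest element, but it does \emph{not} give that covering relations in this lattice are unit steps, i.e.\ single flips: the two-element set $\{(0,0),(1,1)\}\subset\mathbb{Z}^2$ is closed under $\min$ and $\max$, yet its unique covering relation changes two coordinates simultaneously. Your first step shows that a flip is a unit step, not the converse. The missing lemma is: \emph{every matching whose height function is not the pointwise maximum admits at least one face at which an upward flip is possible} (and dually for the minimum). Without it you get neither reachability by clock moves nor even uniqueness of the clocked state, since ``clocked'' means flip-extremal, and a priori there could be flip-extremal states other than the lattice extremes; so the gap infects all three assertions of the theorem. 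This lemma is the combinatorial heart of the Clock Theorem. In your formulation it amounts to the following: for realizable $h<h'$ the set $T=\{f: h(f)<h'(f)\}$ has every boundary edge tight in the direction permitting an increase, and one must extract from this a single face of $T$ \emph{all four} of whose boundary edges are tight; the usual proof passes to an innermost cycle of $M\triangle M'$ and runs an extremal planarity argument inside it — essentially Kauffman's trail argument in disguise. Your framework makes this step tractable, but it is genuine work, not a formal consequence of distributivity, and the appeal to the Periphery Proposition \ref{prop:PropertyK} does not supply it.
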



Denote the unique minimum by $\widehat{0}$ and the unique maximum by $\widehat{1}$ of the connected lattice $\mathcal{L}$; these are also called the \emph{clocked} and \emph{counterclocked} states, respectively, in the graph $\mathcal{G}$.  Let $h$ be the height of the lattice.

The \emph{diameter} of a graph is the maximum of the shortest distance, or number of edges, between any two vertices taken over all pairs of vertices.

\begin{proposition}
\label{prop:height}
The height $h$ of the clock lattice $\mathcal{L}$ is indeed the diameter of the graph $\mathcal{G}$.
\end{proposition}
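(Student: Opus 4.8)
The plan is to view $\mathcal{G}$ as the undirected Hasse diagram of the lattice $\mathcal{L}$ and then prove the two inequalities $\mathrm{diam}(\mathcal{G}) \ge h$ and $\mathrm{diam}(\mathcal{G}) \le h$ separately. By construction an edge of $\mathcal{G}$ is a single flip move, which is exactly one clock move, i.e. a covering relation of $\mathcal{L}$; so a shortest path in $\mathcal{G}$ between two states is a shortest sequence of (undirected) clock moves joining them, and $\mathcal{G}$ is precisely the covering graph of $\mathcal{L}$. The first thing I would do is fix a rank function $\rho$ on $\mathcal{L}$ with $\rho(\widehat{0}) = 0$ and $\rho(\widehat{1}) = h$, so that every edge of $\mathcal{G}$ changes $\rho$ by exactly $1$; the existence of such a $\rho$ is the assertion that $\mathcal{L}$ is graded, addressed below.

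Granting the rank function, the lower bound is immediate. Since each edge of $\mathcal{G}$ changes $\rho$ by $1$, any path from $\widehat{0}$ to $\widehat{1}$ has length at least $\rho(\widehat{1}) - \rho(\widehat{0}) = h$, while a maximal chain of $\mathcal{L}$ (which exists and has length $h$ by gradedness, and lives in $\mathcal{G}$ by the identification above) realizes this bound. Hence $d(\widehat{0},\widehat{1}) = h$, where $d$ denotes shortest-path distance in $\mathcal{G}$, and so $\mathrm{diam}(\mathcal{G}) \ge h$. For the upper bound I would use meets and joins: for any two states $A$ and $B$, concatenating a descending saturated chain from $A$ to $A \wedge B$ with an ascending saturated chain from $A \wedge B$ to $B$ gives a path in $\mathcal{G}$ of length $\rho(A) + \rho(B) - 2\rho(A \wedge B)$. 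By \emph{modularity} of the rank function, $\rho(A) + \rho(B) = \rho(A \wedge B) + \rho(A \vee B)$, so this length equals $\rho(A \vee B) - \rho(A \wedge B) \le \rho(\widehat{1}) - \rho(\widehat{0}) = h$, the inequality coming from $\widehat{0} \le A \wedge B \le A \vee B \le \widehat{1}$ and monotonicity of $\rho$. Thus $d(A,B) \le h$ for every pair, giving $\mathrm{diam}(\mathcal{G}) \le h$, and combining the two bounds yields $\mathrm{diam}(\mathcal{G}) = h$.

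The hard part, and the only genuinely knot/dimer-theoretic input, is justifying that $\mathcal{L}$ is graded with a \emph{modular} rank function; the Clock Theorem \ref{thm:clocklattice:Kauff} supplies connectivity and the unique extreme states $\widehat{0}, \widehat{1}$, but not a priori that all maximal chains share one length or that meets and joins behave modularly. I would obtain both by passing to the dimer height function attached to the planar bipartite graph $\Gamma$: a perfect matching $M$ determines an integer height $h_M$ on the faces of $\Gamma$, the clock order is the pointwise order $h_A \le h_B$, a flip move changes $h_M$ at a single face by one unit, and $\widehat{0}$ and $\widehat{1}$ are exactly the pointwise minimum and maximum. This realizes $\mathcal{L}$ as a distributive lattice, which is automatically graded with modular rank $\rho(A) = \sum_f \big(h_A(f) - h_{\widehat{0}}(f)\big)$ after normalizing so that one flip changes the summed height by one. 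The same height function even gives the upper bound directly and transparently, since then $d(A,B) = \sum_f |h_A(f) - h_B(f)| \le \sum_f \big(h_{\widehat{1}}(f) - h_{\widehat{0}}(f)\big) = h$ because $h_{\widehat{0}}(f) \le h_A(f), h_B(f) \le h_{\widehat{1}}(f)$ at every face $f$. Pinning down this normalization, so that graph distance equals the summed height difference, is the routine but essential verification that underlies the whole argument.
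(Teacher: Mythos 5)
Your proof is correct, but it takes a different and considerably heavier route than the paper's. The paper's upper bound is a one-line trick: for any two states $x,y$ it routes one path through $\widehat{0}$ (length $h(x)+h(y)$) and one through $\widehat{1}$ (length $2h-(h(x)+h(y))$); since these sum to $2h$, the shorter one has length at most $h$. This needs only that $\mathcal{L}$ is graded, i.e.\ that the distance from $x$ down to $\widehat{0}$ is $h(x)$ and up to $\widehat{1}$ is $h-h(x)$ -- a fact the paper uses silently. You instead route through the meet $A\wedge B$ and invoke modularity of the rank function to get $\rho(A)+\rho(B)-2\rho(A\wedge B)=\rho(A\vee B)-\rho(A\wedge B)\leq h$; that is valid but strictly stronger input than necessary, since the $\widehat{0}$/$\widehat{1}$ detour never mentions meets or joins. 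Where your write-up genuinely adds value is in flagging and justifying gradedness: the identification of $\mathcal{L}$ with the distributive lattice of dimer height functions (flip changes the height at one face by one unit, $\widehat{0}$ and $\widehat{1}$ are the pointwise extremes, and $d(A,B)=\sum_f|h_A(f)-h_B(f)|$) is the standard way to make rigorous what the paper takes for granted, and it yields the upper bound directly without even passing through a third state. So: same statement, same reliance on gradedness, but your argument supplies the missing foundation at the cost of importing the height-function machinery, while the paper's is shorter and needs no modularity.
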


\begin{proof}
Since $h$ is the distance between $\widehat{0}$ and $\widehat{1}$, it is enough to show that the distance between any other two elements is no greater than $h$.

Choose any two elements $x$ and $y$ with heights $h(x)$ and $h(y)$, respectively.  Then there are always at least two paths between $x$ and $y$:  one through $\widehat{0}$ and another through $\widehat{1}$.  These two paths have distances $h(x)+h(y)$ and $2h-(h(x)+h(y))$.  Thus if $h(x)+h(y)>h$, the second path is less than $h$.
\end{proof}

We may call $h+1$ the \emph{clock number of the diagram} $p(D)$ for a diagram $D$ with chosen starred regions.  Note that this number is dependent on the actual diagram of the knot given and is not invariant over all diagrams.

%
%
%
%

To turn this into a knot invariant, Abe \cite{Abe} takes the minimum of $p(D)$ over all diagrams $D$ of a knot $K$ and calls this the \emph{clock number} $p(K)$ of the knot.  The two main theorems of this work by Abe are that $p(K)\geq c(K)$, the crossing number of the knot, with equality when $K$ is a two-bridge knot.  The two-bridge knots are well-understood as the closures of rational tangles.

A \emph{bridge} is one of the arcs in a diagram; thus it consists only of over-crossings.  
The \emph{bridge index} $br(K)$ of a knot $K$ is the minimum number of disjoint bridges which together include all over-crossings, considering all diagrams.  An equivalent definition for the bridge index uses a Morse function and counts the number of local maxima of the knot, after taking the minimum over all diagrams.



\section{Notions from graph theory}
\label{sec:graphtheory}


\subsection{Connectivity}
\label{subsec:connect}

The unordered pair $\{A,B\}$ of vertex subsets is a $k$-\emph{separation} of a graph if $A\cup B$ gives the entire vertex set, $|A\cap B|=k$, and the graph has no edge between $A\backslash B$ and $B\backslash A$.  Equivalently a subset $X$ of vertices and edges is said to \emph{separate} two vertex sets $A$ and $B$ if every $A$-$B$ path in the graph contains a vertex or edge from $X$.  The following theorem will be useful below:

\begin{theorem}
\label{thm:Menger}
\cite[Theorem 3.3.1. (Menger 1927)]{Diest}
Let $G=(V,E)$ be a graph and let $A,B\subseteq V$.  Then the minimum number of vertices separating $A$ from $B$ in $G$ is equal to the maximum number of disjoint $A$-$B$ paths in $G$.
\end{theorem}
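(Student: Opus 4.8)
The plan is to prove Menger's theorem by establishing the two inequalities separately, with essentially all of the work in the harder one. Write $k$ for the minimum number of vertices separating $A$ from $B$. The easy direction, that the maximum number of disjoint $A$-$B$ paths is at most $k$, is immediate: any family of pairwise disjoint $A$-$B$ paths must each meet every separator, and disjointness forces them to use distinct separator vertices, so a separator of size $k$ caps the family at $k$ paths. The substance is the reverse inequality, that $G$ contains $k$ pairwise disjoint $A$-$B$ paths.

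I would establish this by induction on the number of edges $|E(G)|$. In the base case $E(G)=\emptyset$, the only $A$-$B$ paths are the trivial length-zero paths at vertices of $A\cap B$; this set is itself the unique minimal separator, so $k=|A\cap B|$ and the $k$ trivial paths suffice. For the inductive step, fix an edge $e=xy$. If the minimum $A$-$B$ separator in $G-e$ still has size $k$, then induction yields $k$ disjoint $A$-$B$ paths in $G-e$, and these remain disjoint $A$-$B$ paths in $G$. Otherwise $G-e$ has a separator $S$ with $|S|\le k-1$; since $S$ fails to separate in $G$, some $A$-$B$ path in $G$ avoids $S$ and hence must traverse $e$, forcing $x,y\notin S$. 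A short check then shows that both $S\cup\{x\}$ and $S\cup\{y\}$ separate $A$ from $B$ in $G$ (an $A$-$B$ path either avoids $e$ and meets $S$, or uses $e$ and so contains $x$ and $y$), so minimality forces $|S|=k-1$ and makes both of these size-$k$ separators.

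The heart of the argument is then a splitting-and-gluing construction. Using the size-$k$ separator $S\cup\{x\}$ I would carve out the ``$A$-side'' subgraph $G_A$ lying between $A$ and $S\cup\{x\}$, and symmetrically use $S\cup\{y\}$ to carve out the ``$B$-side'' subgraph $G_B$ between $S\cup\{y\}$ and $B$. Each of these has strictly fewer edges than $G$ (in particular $e$ belongs to neither), so induction supplies $k$ disjoint $A$-to-$(S\cup\{x\})$ paths in $G_A$ and $k$ disjoint $(S\cup\{y\})$-to-$B$ paths in $G_B$. These two families meet the common vertices of $S$ exactly once each and can be concatenated there, while the two leftover paths -- one ending at $x$, one starting at $y$ -- are joined across the edge $e$. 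The result is $k$ pairwise disjoint $A$-$B$ paths.

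I expect the main obstacle to be making this splitting precise and legitimate: defining $G_A$ and $G_B$ so that each genuinely has fewer edges (guarding against the degenerate situation where one side collapses, for instance when the separator meets $A$ or $B$), verifying that the endpoints of the induced path families line up correctly on $S\cup\{x\}$ and $S\cup\{y\}$, and confirming that the concatenated paths are internally disjoint rather than merely sharing separator vertices. An alternative route that sidesteps much of this casework is to reduce to the max-flow min-cut theorem -- model $A$ and $B$ as source and sink, split each vertex into an in/out pair joined by a unit-capacity edge, and read off that an integral maximum flow decomposes into the required disjoint paths while a minimum cut yields a separator of equal size -- but that merely relocates the difficulty into the flow machinery.
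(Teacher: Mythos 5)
The paper gives no proof of this statement: it is imported verbatim from Diestel as a classical result, so there is nothing internal to compare against. Your proposal is the standard induction on $|E(G)|$ --- delete an edge $e=xy$, obtain the two size-$k$ separators $S\cup\{x\}$ and $S\cup\{y\}$, apply induction on each side, and glue the two path systems across $e$ --- which is precisely the proof of Theorem 3.3.1 in the cited reference, and your outline is sound, including your correct identification of the two points that need care (that every $A$--$(S\cup\{x\})$ separator on the $A$-side is itself an $A$--$B$ separator of $G$ and hence has size at least $k$, and that the concatenated paths are internally disjoint because a crossing outside $S\cup\{x,y\}$ would yield an $A$--$B$ path in $G-e$ avoiding $S$).
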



A knot $K$ is called \emph{prime} in standard terminology if when it is written as a connect sum $K=K_1\# K_2$, either $K_1$ or $K_2$ must be the unknot.  Below we say that a knot diagram $D$ is \emph{prime-like} if the diagram cannot be written as a conncect sum of diagrams $D=D_1\#D_2$ where there are crossings in both $D_1$ and $D_2$.


\begin{lemma}
\label{lem:cutvertices}
The following are equivalent for a diagram $D$ for a knot $K$ with no nugatory crossings:
\begin{enumerate}
	\item the diagram $D$ is not prime-like;
	\item the Tait graph $G$ has a cutvertex;
	\item the dual Tait graph $G^*$ has a cutvertex; and
	\item the overlaid Tait graph $\widehat{\Gamma}$ has a 2-separation:  namely the two cutvertices of $G$ and $G^*$ above separate the graph.
\end{enumerate}
Furthermore, there is an arc of the knot diagram incident with the regions associated to both cutvertices.
\end{lemma}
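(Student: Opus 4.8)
The plan is to route everything through a single geometric object: a simple closed curve $\gamma$ in the projection sphere $S^2$ meeting the diagram $D$ transversely in exactly two points and having crossings of $D$ on both of its sides. The existence of such a $\gamma$ is, up to isotopy, precisely what it means for $D$ to admit a connect-sum splitting with crossings in both summands, so I take statement (1) to be equivalent to the existence of such a $\gamma$. I will then show that one and the same curve manufactures the cutvertices of (2) and (3) and the $2$-separation of (4), and conversely that each of (2)--(4) supplies such a $\gamma$.

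For the forward direction, note first that since $\gamma$ meets $D$ in only two points, each of the two arcs into which these points cut $\gamma$ lies in the interior of a single region of $U$; call these regions $F_B$ and $F_W$. Crossing an arc of $D$ reverses the checkerboard colour, so $F_B$ and $F_W$ have opposite colours --- say $F_B$ black and $F_W$ white --- and the two arcs of $D$ met by $\gamma$ are exactly the edges of $U$ incident to both $F_B$ and $F_W$; this already yields the \emph{furthermore} clause. Because $\gamma$ separates $S^2$ with crossings on each side and $F_B$ is the only black region straddling $\gamma$, every path in $G$ from an inside black vertex to an outside black vertex must pass through the vertex $F_B$; hence $F_B$ is a cutvertex of $G$, and symmetrically $F_W$ is a cutvertex of $G^*$. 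Here the no-nugatory hypothesis is essential: a nugatory crossing is exactly a loop of $G$ or of $G^*$, and its absence guarantees that every crossing joins two \emph{distinct} black regions (and two distinct white regions), so that each side of $\gamma$ genuinely contains a black vertex other than $F_B$ and the deletion of $F_B$ truly disconnects $G$.

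To obtain the $2$-separation of $\widehat{\Gamma}$ in (4), I reroute $\gamma$ so that it passes exactly through the two \emph{white} vertices $F_B$ and $F_W$ of $\widehat{\Gamma}$. The two edges of $U$ met by $\gamma$ correspond to two square faces of $\widehat{\Gamma}$, and in each such square the two black vertices (the crossings) and the two white vertices ($F_B$ and $F_W$) lie on the two diagonals. Choosing $\gamma$ to cross each square along its white--white diagonal makes $\gamma$ meet $\widehat{\Gamma}$ only at the vertices $F_B,F_W$ and cross none of its edges. Thus $\widehat{\Gamma}\setminus\{F_B,F_W\}$ is split by $\gamma$ into an inside and an outside with no edges between them, which is a $2$-separation whose separating pair is exactly the two cutvertices of $G$ and $G^*$ found above.

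It remains to produce $\gamma$ from each of (2)--(4), and this is where I expect the real work to lie. Given a cutvertex $v=F_B$ of $G$, I will invoke the standard fact that in a connected plane graph $v$ is a cutvertex precisely when some face --- a white region $F_W$ of $U$ --- carries $v$ at least twice on its boundary walk; equivalently $F_B$ and $F_W$ are adjacent along at least two distinct arcs of $U$, and this $F_W$ is simultaneously a cutvertex of $G^*$, linking (2) and (3). Threading $\gamma$ through $F_B$ and $F_W$ across two such shared arcs chosen to separate the two blocks at $v$ yields the desired curve and hence the connect-sum splitting; and (4)$\Rightarrow$(2) follows since, $\widehat{\Gamma}$ being bipartite, any $G$-path avoiding $F_B$ would lift to a $\widehat{\Gamma}$-path avoiding both $F_B$ and $F_W$, contradicting the $2$-separation. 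The construction from (3) is identical after interchanging the roles of black and white, giving (1)$\Leftrightarrow$(3). The main obstacle will be the planar-topology bookkeeping that guarantees the curve meets $D$ in \emph{exactly} two points rather than re-crossing the diagram, together with the careful use of the no-nugatory hypothesis to rule out the degenerate configurations --- loops, or a side carrying no crossings --- that would otherwise break the equivalence.
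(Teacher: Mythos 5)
Your proposal is correct and follows essentially the same route as the paper: both arguments hinge on a simple closed curve meeting the diagram in exactly two points, passing through one black and one white region, which simultaneously realizes the connect-sum splitting, the cutvertices of $G$ and $G^*$, and the $2$-separation of $\widehat{\Gamma}$ (this is exactly the content of the paper's Figure~\ref{fig:cutvertices}). In fact your write-up is more detailed than the paper's --- notably the checkerboard-colouring argument identifying $F_B$ and $F_W$ and the face-boundary-walk characterization of cutvertices for the converse --- and the ``bookkeeping'' you flag as remaining is no less complete than the paper's own one-sentence treatment of that step.
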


\begin{proof}
The implication $(1)\Rightarrow(2)$ holds by Figure \ref{fig:cutvertices}.  The converse also holds because a circle around one component of the graph meeting only at the cutvertex is the same circle that encloses one of the diagrams in the connect sum.

The implication $(2)\Leftrightarrow(3)$ holds by Figure \ref{fig:cutvertices} and the fact that the Tait graph is unchanged by ambient isotopy of the knot diagram on a sphere.

The implication $(2)\Leftrightarrow(4)$ holds because the circle from above is also the circle that encloses one of the sets in the 2-separation of $\widehat{\Gamma}$ meeting it only at the two cutvertices of $G$ and $G^*$.

Lastly, if the two cutvertices were not incident with a single arc of the knot diagram, then there would be another region in between them, violating all of the above.
\end{proof}


%


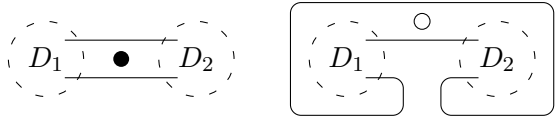
\begin{figure}[h]
\begin{center}
\begin{tikzpicture}

\draw[loosely dashed] (0,0) circle (.5cm);
\draw[loosely dashed] (2,0) circle (.5cm);
\draw[loosely dashed] (4,0) circle (.5cm);
\draw[loosely dashed] (6,0) circle (.5cm);

\draw (0,0) node {$D_1$};
\draw (2,0) node {$D_2$};
\draw (4,0) node {$D_1$};
\draw (6,0) node {$D_2$};

\draw (.25,.25) -- (1.75,.25);
\draw (.25,-.25) -- (1.75,-.25);
\draw (4.25,.25) -- (5.75,.25);

\draw[rounded corners] (4.25,-.25) -- (4.75,-.25) -- (4.75,-.75) -- (3.25,-.75) -- (3.25,.75) -- (6.75,.75) -- (6.75,-.75) -- (5.25,-.75) -- (5.25,-.25) -- (5.75,-.25);

	\fill[color=white] (5,.5) circle (3pt);
	\draw (5,.5) circle (3pt);
	\fill[color=black] (1,0) circle (3pt);


\end{tikzpicture}
	\caption{Cutvertices in $G$ and $G^*$.}
	\label{fig:cutvertices}
\end{center}
\end{figure}

\begin{remark}
\label{rem:obtainingGammaConnectivity}
Observe that since the two cutvertices of $G$ and $G^*$ separate $\widehat{\Gamma}$, the deletion of both of these vertices makes $\Gamma$ disconnected while the deletion of one of these vertices makes $\Gamma$ 1-connected.
\end{remark}

\begin{proposition}
\label{prop:2connG}
The Tait graph $G$ for a prime-like knot diagram with no nugatory crossings is 2-connected.  This also holds for its dual $G^*$.
\end{proposition}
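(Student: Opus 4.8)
The plan is to obtain 2-connectivity by separately establishing that $G$ is connected and that $G$ has no cutvertex, and then invoking the standard fact — a consequence of Menger's Theorem \ref{thm:Menger} — that a connected graph is 2-connected exactly when it has no cutvertex (equivalently, when every pair of vertices is joined by two internally disjoint paths). The no-cutvertex half is essentially handed to us by the previous section, so the genuine content is the connectivity check together with care at the degenerate end of the definition.

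First I would verify that $G$ is connected. Since $K$ is a knot, the diagram $D$, and hence the universe $U$, is a connected $4$-regular plane graph. The universe is the medial graph $M(G)$ of the Tait graph $G$ (and, identically, of its dual $G^*$), and the medial graph of a plane graph is connected if and only if the underlying graph is: edges lying in different components share no vertex and so contribute vertices to different components of the medial graph. As $U$ is connected, both $G$ and $G^*$ are therefore connected.

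For the absence of cutvertices I would appeal directly to Lemma \ref{lem:cutvertices}. By the equivalence of its conditions $(1)$ and $(2)$, a diagram with no nugatory crossings fails to be prime-like exactly when $G$ has a cutvertex; taking contrapositives, our prime-like hypothesis forces $G$ to have no cutvertex. The equivalence of $(1)$ and $(3)$ yields the same statement for $G^*$. Combined with the connectivity established above, each of $G$ and $G^*$ is connected with no cutvertex, hence 2-connected, proving both halves of the proposition at once.

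The step needing the most care — and where I expect the only real subtlety — is the degenerate boundary of the definition rather than the core argument. The hypothesis of no nugatory crossings is precisely what excludes bridges: a nugatory crossing yields a loop in one Tait graph and a bridge in its dual, and such a bridge would break 2-connectivity and create a cutvertex; this is exactly why Lemma \ref{lem:cutvertices} (and hence the proposition) assumes it. One should also set aside the trivial crossingless diagram, and confirm that small multigraphs arising as Tait graphs are covered: for example one checkerboard coloring of the standard trefoil gives two vertices joined by three edges, which has no cutvertex and admits two internally disjoint paths between its vertices, so it is 2-connected under the Menger formulation we use. I would state this formulation explicitly so that such cases are included rather than ruled out on a vertex-count technicality.
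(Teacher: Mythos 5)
Your proposal follows the paper's own proof exactly: connectivity of $G$ and $G^*$ from the diagram being a single knot component, and absence of cutvertices from Lemma \ref{lem:cutvertices} via the prime-like hypothesis, giving 2-connectivity. The extra remarks on the medial graph and on degenerate small cases are fine elaborations but do not change the route.
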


\begin{proof}
The Tait graph $G$ must be connected because the diagram is for a knot with a single component.  By Lemma \ref{lem:cutvertices} there can be no cutvertices.
\end{proof}

The result that $\Gamma$ is 2-connected is Proposition \ref{prop:Gamma2conn} at the end of the next subsection.  In order to show this, we introduce the following possibly unfamiliar idea.


\bigskip

\subsection{Elementary graphs}
\label{subsec:elementary}  
We make specific mention of the following definitions, as they may be unfamiliar to many readers.

\begin{definition}
\label{def:elementary}
An edge of any graph is \emph{allowed} if it lies in some perfect matching of the graph and \emph{forbidden} otherwise.  A graph is \emph{elementary} if its allowed edges form a connected subgraph of the graph.
\end{definition}

Recall that a \emph{vertex covering} is a subset of vertices such that every edge has at least one endpoint in the vertex subset.  Denote by $\nu(X)$ the set of \emph{neighbors} of a subset $X$ of vertices.  Let $K_2$ be the complete graph on two vertices: that is, a single edge.

\begin{theorem}
\label{thm:elementary:LP}
\cite[Theorem 4.1.1.]{LovPlum}
Given a bipartite graph with a bipartition $(U,W)$ of the vertex set, the following are equivalent:
\begin{enumerate}
	\item the graph is elementary;
	\item the graph has exactly two vertex coverings, namely $U$ and $W$;
	\item $|U|=|W|$ and for every non-empty proper subset $X$ of $U$, $|\nu(X)|\geq|X|+1$;
	\item the graph is $K_2$, or there are at least four vertices and for any $u\in U$, $w\in W$, the graph with these two vertices deleted has a perfect matching; and
	\item the graph is connected and every edge is allowed.
\end{enumerate}
\end{theorem}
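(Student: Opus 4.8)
The whole theorem is best organized around condition (5) as a hub, with the unifying tool being an auxiliary digraph read off from a fixed perfect matching together with Hall's and K\"onig's theorems as the bipartite backbone. Fix a perfect matching $M$ and build a digraph $D$ on $V(G)$ by orienting each edge of $M$ from its $W$-endpoint to its $U$-endpoint and each non-matching edge from its $U$-endpoint to its $W$-endpoint. The key lemma I would isolate first is that a non-matching edge $uw$ is allowed if and only if there is an $M$-alternating cycle through it, equivalently a directed cycle of $D$ through the arc $u\to w$, equivalently $u$ and $w$ lie in a common strong component of $D$; matching edges are always allowed and trivially keep their endpoints in one strong component. Consequently the subgraph of allowed edges is spanning, and it is connected exactly when $D$ has a single strong component, i.e.\ when $D$ is \emph{strongly connected}. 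This dictionary converts the statement ``the graph is elementary'' into the purely digraph-theoretic condition that $D$ is strongly connected, which is the engine behind every implication.

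With this in hand, the crux implication (1)$\Rightarrow$(5) becomes nearly immediate: if $D$ is strongly connected then for every non-matching edge $uw$ there is a directed path $w\to\cdots\to u$ closing a directed cycle through $u\to w$, so \emph{every} edge is allowed, and connectedness of $G$ follows from connectedness of the spanning allowed subgraph; the reverse (5)$\Rightarrow$(1) is trivial since if all edges are allowed they form $G$, which is connected. I would then run the subcycle (5)$\Rightarrow$(3)$\Rightarrow$(4)$\Rightarrow$(5). For (5)$\Rightarrow$(3): a perfect matching forces $|U|=|W|$, Hall gives $|\nu(X)|\ge|X|$, and if equality held for a proper nonempty $X$ then $M$ would match $X$ bijectively onto $\nu(X)$; any edge from $\nu(X)$ to $U\setminus X$ could then not lie in a perfect matching (a counting obstruction), contradicting ``every edge allowed,'' while the absence of such an edge would disconnect $X\cup\nu(X)$ from the rest, contradicting connectivity---so the inequality is strict. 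For (3)$\Rightarrow$(4): given strict Hall and any $u\in U,\,w\in W$, the bipartite graph $G-u-w$ still satisfies Hall (each $X$ loses at most the one neighbor $w$, and strict Hall absorbs that loss), hence has a perfect matching; adding back $uw$ shows $uw$ is allowed, and the $K_2$ clause covers $n=1$. Then (4)$\Rightarrow$(5) is short: ``$G-u-w$ has a perfect matching for all $u,w$'' makes every edge allowed and, by a balancing argument on hypothetical components (a same-component deletion forces $|U_i|=|W_i|$ while a cross-component one forces $|U_i|=|W_i|\pm1$), forces connectivity.

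The remaining equivalence (5)$\Leftrightarrow$(2) I would handle through the correspondence between minimum vertex covers and out-closed vertex sets of the digraph obtained from $D$ by contracting each matching edge to a single node. A minimum vertex cover picks exactly one endpoint of each matching edge; declaring a matching edge ``of type $W$'' when its $W$-endpoint is chosen, the covering condition on each non-matching arc translates into the type-$W$ nodes being closed under out-arcs. In a strongly connected digraph the only such out-closed sets are $\emptyset$ and everything, yielding precisely the two covers $U$ and $W$; conversely a nontrivial out-closed set (which exists whenever $D$ fails to be strongly connected, i.e.\ whenever (5) fails) produces a third minimal cover, giving the contrapositive of (2)$\Rightarrow$(5).

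\textbf{Main obstacle and caveats.} The genuinely non-obvious point is that (1), which only asserts \emph{connectivity} of the allowed subgraph, already forces \emph{all} edges to be allowed; naively a connected spanning allowed subgraph could coexist with forbidden edges, and it is exactly the strong-component characterization above (strong, not merely weak, connectivity of $D$) that rules this out---so I expect the alternating-cycle lemma to carry the real weight. Two bookkeeping caveats I would flag rather than belabor: condition (2) must be read as the \emph{minimal} (equivalently minimum) vertex coverings, since any superset of $U$ is also a covering, so ``exactly two'' cannot mean all coverings; and the degenerate cases need the standing assumption of no isolated vertices (and the explicit $K_2$ clause for $n=1$) to guarantee that a perfect matching actually exists and that conditions like (3) are not satisfied vacuously by two isolated vertices.
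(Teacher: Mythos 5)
The paper does not actually prove this statement: it is quoted as \cite[Theorem 4.1.1.]{LovPlum} from Lov\'asz and Plummer's \emph{Matching Theory}, so there is no in-paper proof to compare against. Judged on its own terms, your proposal contains one genuine gap, located exactly at the point you yourself single out as the crux.

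The false step is the clause ``matching edges are always allowed and trivially keep their endpoints in one strong component.'' In your digraph $D$ a matching edge $uw$ contributes only the single arc $w\to u$; for $u$ and $w$ to lie in a common strong component you need a directed path from $u$ back to $w$, i.e.\ an $M$-alternating cycle through $uw$, and this can fail: for the path $u_1w_1u_2w_2$ with $M=\{u_1w_1,u_2w_2\}$ all four strong components are singletons, yet both matching edges are allowed. Because this claim is wrong, your asserted dictionary ``the allowed subgraph is connected exactly when $D$ is strongly connected'' does not follow from the alternating-cycle lemma, and its forward direction is precisely the hard content of $(1)\Rightarrow(5)$, namely that connectivity of the allowed subgraph already forces every edge to be allowed. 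The dictionary is in fact true, but it needs an argument you do not supply; one way is via sink components of the condensation of $D$: any strong component with at least two vertices is closed under $M$ (each $u\in U$ has a unique in-arc, its matching arc, and each $w\in W$ has a unique out-arc, its matching arc), hence is matched-balanced; a matched-balanced sink component $T$ admits no allowed edge leaving it (non-matching edges out of $T\cap U$ would be outgoing arcs, non-matching edges into $T\cap W$ from outside join distinct strong components and are forbidden, and no matching edge crosses), so the allowed subgraph is disconnected unless $T=V$; and a singleton sink $\{u\}$ forces $u$ to be pendant, whence every other edge at its partner is forbidden and the allowed subgraph splits off that matched pair unless $G=K_2$. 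With that lemma restored, the rest of your plan is sound: $(5)\Rightarrow(3)\Rightarrow(4)\Rightarrow(5)$ works as written (strict Hall survives deleting one vertex from each side, and the component-balancing argument in $(4)\Rightarrow(5)$ is correct), and your treatment of $(2)$ via out-closed sets of the contracted digraph is the right mechanism, though you should either show that in an elementary bipartite graph every \emph{minimal} vertex cover is minimum or run the out-closed-set argument for minimal covers directly, since $(2)$ (as in Lov\'asz--Plummer) concerns minimal covers and not only minimum ones.
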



In order to prove the next main theorem for this subsection, one must remove any nugatory crossings from the knot diagram before taking the associated balanced overlaid Tait graph.

\begin{theorem}
\label{thm:elementary}
The balanced overlaid Tait graph $\Gamma$ for a prime-like knot diagram with no nugatory crossings is an elementary graph.
\end{theorem}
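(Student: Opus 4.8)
The plan is to verify one of the equivalent conditions in the elementary–graph characterization, Theorem \ref{thm:elementary:LP}. Writing $(U,W)$ for the black/white bipartition of $\Gamma$, the balance condition already gives $|U|=|W|$, so the natural target is condition (5): that $\Gamma$ is \emph{connected} and that \emph{every edge is allowed}. Both pieces should follow from the hypothesis that the diagram is prime-like, which by Lemma \ref{lem:cutvertices} and Proposition \ref{prop:2connG} is exactly the statement that the Tait graph $G$ and its dual $G^{*}$ are $2$-connected. Crucially, I would use only the $2$-connectivity of $G$ and $G^{*}$, never that of $\Gamma$ itself, since the latter is derived \emph{from} this theorem downstream and invoking it here would be circular.

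For connectivity I would work through the perfect-matching/spanning-tree dictionary. The white vertices of $\Gamma$ are $V(G)\setminus\{r\}$ together with $V(G^{*})\setminus\{r^{*}\}$, where $r,r^{*}$ are the two starred (deleted) vertices, and the black vertices are the edges of $G$. The subgraph of $\Gamma$ spanned by the black vertices and the white vertices coming from $V(G)$ is precisely $G$ with each edge subdivided once and with $r$ removed; since $G$ is $2$-connected, $G-r$ is connected, and (using that there are no loops, as there are no nugatory crossings) every black vertex incident to $r$ remains attached via its other endpoint. Hence this subgraph is connected and contains \emph{all} black vertices. The symmetric argument with $G^{*}$ produces a second connected subgraph containing all black vertices, and since the two overlap in the entire black vertex set, $\Gamma$ is connected. (Alternatively one can phrase this via Lemma \ref{lem:cutvertices} and Remark \ref{rem:obtainingGammaConnectivity}: a disconnection of $\Gamma$ would be a separating structure forcing a cutvertex, i.e.\ a non-prime-like decomposition.)

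The heart of the argument is showing every edge is allowed. Under the bijection between perfect matchings of $\Gamma$ and (rooted) spanning-tree/cotree pairs $(T,T^{*})$ of $(G,G^{*})$ rooted at $r,r^{*}$, a black vertex $b$ corresponding to an edge $e_b$ of $G$ is matched to the child endpoint of $e_b$ in $T$ when $e_b\in T$, and otherwise to the child face of $e_b^{*}$ in $T^{*}$. So to realize a prescribed edge $(b,w)$ with $w\in V(G)\setminus\{r\}$, I would build a spanning tree of $G-w$ rooted at $r$ (which exists since $w\neq r$ and $w$ is not a cutvertex, by $2$-connectivity) and then attach $w$ as a leaf through $e_b$; the resulting $T$ makes $b$ matched exactly to $w$. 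Here the absence of loops guarantees the far endpoint of $e_b$ lies in $G-w$, so the construction is legitimate. The case $w\in V(G^{*})\setminus\{r^{*}\}$ is identical with the roles of $G$ and $G^{*}$ interchanged. Existence of at least one perfect matching, needed to make the correspondence non-vacuous, is guaranteed by Kauffman's Clock Theorem \ref{thm:clocklattice:Kauff}. With connectivity and the allowedness of every edge established, condition (5) of Theorem \ref{thm:elementary:LP} is satisfied and $\Gamma$ is elementary.

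I expect the main obstacle to be the \textbf{every-edge-allowed} step: one must invoke the perfect-matching/spanning-tree bijection correctly (including the rooting at the starred vertices and the behavior of the boundary vertices of valence two and three from the Periphery Proposition \ref{prop:PropertyK}) and confirm that the leaf-attachment tree indeed forces the desired matching edge. A secondary subtlety is keeping the whole argument free of $\Gamma$'s own $2$-connectivity. Should the reliance on the prior-work bijection be undesirable, an alternative is to verify condition (3) directly: any failure $|\nu(X)|=|X|$ for a proper nonempty $X\subseteq U$ would be a tight (Hall-critical) set, which one would translate via Lemma \ref{lem:cutvertices} into a connect-sum separation of the diagram, contradicting prime-likeness; Menger's Theorem \ref{thm:Menger} is well suited to formalizing that separation.
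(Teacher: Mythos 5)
Your proposal is correct, and it follows the same overall strategy as the paper: verify condition (5) of Theorem \ref{thm:elementary:LP} by passing through the perfect-matching/rooted-spanning-tree bijection of \cite[Proposition 4.8]{Co:jones} and using only the $2$-connectivity of $G$ and $G^{*}$ from Proposition \ref{prop:2connG} (correctly avoiding the circularity with Proposition \ref{prop:Gamma2conn}). Where you genuinely diverge is in the key step of realizing a prescribed edge $(b,w)$ of $\Gamma$ inside some arborescence. The paper first notes that $e_b$ is neither a loop nor a bridge (no nugatory crossings), so it lies on a cycle $C$, and then invokes Menger's Theorem \ref{thm:Menger} to produce two disjoint paths from the neighbors of the starred vertex to $C$, splicing them with an arc of $C$ through $e_b$ to get a cycle through both $e_b$ and the root; deleting an edge at either end of that cycle realizes either orientation of $e_b$ in a rooted spanning tree. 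Your construction is more direct and more elementary: since $G$ is $2$-connected, $G-w$ is connected, so you span it rooted at $r$ and attach $w$ as a leaf through $e_b$, immediately forcing $b$ to be matched to $w$ (the no-loop hypothesis guaranteeing the far endpoint of $e_b$ survives in $G-w$). This bypasses Menger entirely and also handles in one stroke the boundary case where $e_b$ is incident to the starred vertex, which the paper must address separately by remarking that the wrong orientation was deleted in passing from $\widehat{\Gamma}$ to $\Gamma$. A further point in your favor is that you explicitly establish connectivity of $\Gamma$ (via the two subdivided Tait graphs overlapping in the full black vertex set), a part of condition (5) that the paper's proof leaves implicit. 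The one thing to pin down carefully if you write this up is the orientation convention in the bijection --- that a tree edge is matched to its child (non-root-side) endpoint --- since your leaf-attachment argument silently relies on it; with that convention fixed, the argument is complete.
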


\begin{proof}
Consider some edge $\varepsilon\in E(\Gamma)$ in the balanced overlaid Tait graph.  By Theorem \ref{thm:elementary:LP} (5) we must show that this edge is allowed, that is, that it belongs to some perfect matching of $\Gamma$.  Note that $\varepsilon$ is also an edge in the overlaid Tait graph $\widehat{\Gamma}$ before two white vertices are deleted.

According to 
\cite[Proposition 4.8]{Co:jones}, there is a bijection between perfect matchings of the balanced overlaid Tait graph $\Gamma$ and rooted spanning trees of one of the Tait graphs $G$ or its dual $G^*$.  Furthermore, this work gives a bijection between the edge $\varepsilon\in E(\Gamma)$ and a directed edge $\vec{e}\in E(G)\cup E(G^*)$ in one of the two Tait graphs, as in Figure \ref{fig:CrossingTaitOverlaidTaitAGAIN}.  

\begin{figure}[h]
\begin{center}
\begin{pspicture}(0,0)(11,4)

\pscircle[linewidth=1pt, linecolor=black, fillstyle=solid, fillcolor=black](1,3){.15}
\pcline[linewidth=.75 pt, linestyle=dotted]{-}(0,3)(1,3)
\pcline[linewidth=.75 pt, linestyle=dotted]{-}(1,3)(2,3)
\pcline[linewidth=.75 pt, linestyle=dotted]{-}(1,2)(1,3)
\pcline[linewidth=.75 pt, linestyle=solid]{-}(1,3)(1,4)
\pscircle[linewidth=1pt, linecolor=black, fillstyle=solid](0,3){.15}
\pscircle[linewidth=1pt, linecolor=black, fillstyle=solid](2,3){.15}
\pscircle[linewidth=1pt, linecolor=black, fillstyle=solid](1,2){.15}
\pscircle[linewidth=1pt, linecolor=black, fillstyle=solid](1,4){.15}

\pscircle[linewidth=1pt, linecolor=black, fillstyle=solid, fillcolor=black](4,3){.15}
\pcline[linewidth=.75 pt, linestyle=dotted]{-}(3,3)(4,3)
\pcline[linewidth=.75 pt, linestyle=dotted]{-}(4,3)(5,3)
\pcline[linewidth=.75 pt, linestyle=solid]{-}(4,2)(4,3)
\pcline[linewidth=.75 pt, linestyle=dotted]{-}(4,3)(4,4)
\pscircle[linewidth=1pt, linecolor=black, fillstyle=solid](3,3){.15}
\pscircle[linewidth=1pt, linecolor=black, fillstyle=solid](5,3){.15}
\pscircle[linewidth=1pt, linecolor=black, fillstyle=solid](4,2){.15}
\pscircle[linewidth=1pt, linecolor=black, fillstyle=solid](4,4){.15}

\pscircle[linewidth=1pt, linecolor=black, fillstyle=solid, fillcolor=black](7,3){.15}
\pcline[linewidth=.75 pt, linestyle=solid]{-}(6,3)(7,3)
\pcline[linewidth=.75 pt, linestyle=dotted]{-}(7,3)(8,3)
\pcline[linewidth=.75 pt, linestyle=dotted]{-}(7,2)(7,3)
\pcline[linewidth=.75 pt, linestyle=dotted]{-}(7,3)(7,4)
\pscircle[linewidth=1pt, linecolor=black, fillstyle=solid](6,3){.15}
\pscircle[linewidth=1pt, linecolor=black, fillstyle=solid](8,3){.15}
\pscircle[linewidth=1pt, linecolor=black, fillstyle=solid](7,2){.15}
\pscircle[linewidth=1pt, linecolor=black, fillstyle=solid](7,4){.15}

\pscircle[linewidth=1pt, linecolor=black, fillstyle=solid, fillcolor=black](10,3){.15}
\pcline[linewidth=.75 pt, linestyle=dotted]{-}(9,3)(10,3)
\pcline[linewidth=.75 pt, linestyle=solid]{-}(10,3)(11,3)
\pcline[linewidth=.75 pt, linestyle=dotted]{-}(10,2)(10,3)
\pcline[linewidth=.75 pt, linestyle=dotted]{-}(10,3)(10,4)
\pscircle[linewidth=1pt, linecolor=black, fillstyle=solid](9,3){.15}
\pscircle[linewidth=1pt, linecolor=black, fillstyle=solid](11,3){.15}
\pscircle[linewidth=1pt, linecolor=black, fillstyle=solid](10,2){.15}
\pscircle[linewidth=1pt, linecolor=black, fillstyle=solid](10,4){.15}

\pcline[linewidth=1 pt]{-}(.5,0)(1.5,1)
\pcline[linewidth=1 pt]{-}(1.5,0)(.5,1)
\pcline[linewidth=1.5 pt]{->}(1,0)(1,1)

\pcline[linewidth=1 pt]{-}(3.5,0)(4.5,1)
\pcline[linewidth=1 pt]{-}(4.5,0)(3.5,1)
\pcline[linewidth=1.5 pt]{<-}(4,0)(4,1)

\pcline[linewidth=1 pt]{-}(6.5,0)(7.5,1)
\pcline[linewidth=1 pt]{-}(7.5,0)(6.5,1)
\pcline[linewidth=1.5 pt]{<-}(6.5,.5)(7.5,.5)

\pcline[linewidth=1 pt]{-}(9.5,0)(10.5,1)
\pcline[linewidth=1 pt]{-}(10.5,0)(9.5,1)
\pcline[linewidth=1.5 pt]{->}(9.5,.5)(10.5,.5)

\end{pspicture}
	\caption{The correspondence between edges $\varepsilon$ in the overlaid Tait graph $\widehat{\Gamma}$ and directed edges $e$ in the (directed) Tait graph $G$.}
	\label{fig:CrossingTaitOverlaidTaitAGAIN}
\end{center}
\end{figure}
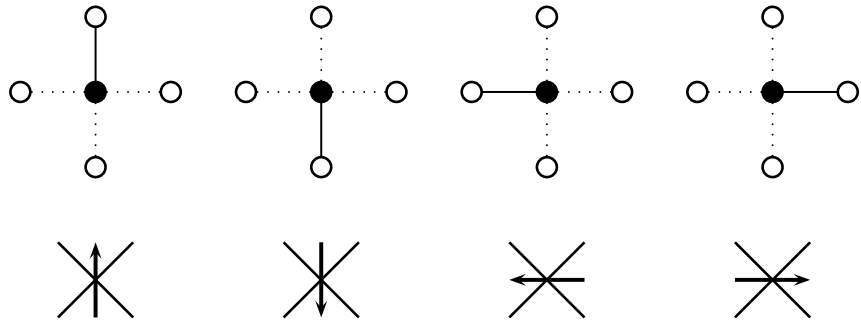

Thus it is enough to show that the directed edge $\vec{e}\in E(G)\cup E(G^*)$ belongs to some rooted spanning tree of either $G$ or its dual $G^*$.  If there are no nugatory crossings, then this edge cannot be a loop or a bridge.  So the undirected edge $e$ belongs to some un-rooted spanning tree.


To see that both orientations of this edge occur, it is enough to show that there is a cycle containing both $e$ and the starred vertex where $e$ is not incident to the starred vertex.  Then an edge can be removed from either end of the cycle to produce either orientation on the remaining edges of the cycle.  Observe that if $e$ is incident with the starred vertex then the edge of $\widehat{\Gamma}$ corresponding to the wrong orientation of the edge of $G$ was deleted to produce $\Gamma$.

Since $e$ is not a loop or bridge, it belongs to some cycle $C$.  Since by Proposition \ref{prop:2connG} the Tait graph $G$ is 2-connected, there can be no 1-separation, and so a separating set must be of size at least two.  Apply Menger's Theorem \ref{thm:Menger} with $A$ being the neighbors of the starred vertex and $B$ being the cycle $C$; then there must be two disjoint $A$-$B$ paths $P_1$ and $P_2$ in $G$.  In particular, the endpoints in the cycle $C$ cannot be the same, and so these endpoints partition $C$ into two paths, one of which contains $e$, say $P_3$.  Then $P_1\cup P_2\cup P_3$ is a cycle containing both $C$ and the starred vertex, as desired.
\end{proof}

In particular, this will show that $\Gamma$ for a prime-like knot diagram with no nugatory crossings is 2-connected.  First we need another possibly unfamiliar definition.

\begin{definition}
\label{extendable}
A graph $\Gamma$ is said to be \emph{$n$-extendable} if it is connected, has a set of $n$ independent lines, and
every set of $n$ independent lines in $\Gamma$ extends to (i.e. is a subset of) a perfect matching of $\Gamma$.
\end{definition}

Then by Theorem \ref{thm:elementary:LP} (5) an elementary bipartite graph is 1-extendable.

\begin{lemma}
\cite[Lemma 3.1]{Plum}
\label{lem:1ext2conn}
Every 1-extendable graph (that is not $K_2$) is 2-connected.
\end{lemma}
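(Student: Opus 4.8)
The plan is to argue by contradiction, using 1-extendability to force two incompatible parity computations at a hypothetical cutvertex. First I would record the easy structural facts: since $\Gamma$ is 1-extendable it has at least one edge, and that edge extends to a perfect matching, so $\Gamma$ has a perfect matching and in particular an even number of vertices; being connected and distinct from $K_2$, it therefore has at least four vertices. Thus to establish 2-connectedness (the graph is already connected by Definition \ref{extendable}) it suffices to show $\Gamma$ has no cutvertex. Suppose for contradiction that $v$ is a cutvertex, and let $C_1,\dots,C_k$ with $k\geq 2$ be the connected components of $\Gamma-v$.

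The key observation concerns how $v$ can be matched. Because $\Gamma$ is connected and each $C_i$ is a component of $\Gamma-v$, every edge leaving $C_i$ in $\Gamma$ must go to $v$; in particular each $C_i$ contains at least one neighbor of $v$, and no edge runs directly between two distinct components. Consequently, in any perfect matching of $\Gamma$ the vertex $v$ is matched into exactly one component $C_j$, and every other component $C_i$ (with $i\neq j$) must be matched entirely within itself, which forces $|C_i|$ to be even.

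Now I would deploy 1-extendability to ``move'' the match of $v$ between two components and extract the contradiction. Fix two distinct components, say $C_1$ and $C_2$, and choose edges $vw_1$ and $vw_2$ with $w_1\in C_1$ and $w_2\in C_2$ (these exist by the previous paragraph). Extending the single edge $vw_1$ to a perfect matching puts $v$'s partner in $C_1$, so by the observation $C_2$ is matched internally and $|C_2|$ is even. Extending $vw_2$ to a perfect matching instead puts $v$'s partner in $C_2$; then the remaining $|C_2|-1$ vertices of $C_2$ are matched within $C_2$, forcing $|C_2|-1$ to be even, i.e. $|C_2|$ odd. This contradicts the parity just obtained, so no cutvertex can exist and $\Gamma$ is 2-connected.

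The only real idea is the one in the last paragraph: 1-extendability is exactly the leverage needed to realize $v$ matched into $C_1$ in one perfect matching and into $C_2$ in another, and pitting the two resulting parity counts for $|C_2|$ against each other yields the contradiction. I expect the main (and rather minor) obstacle to be stating the component-parity observation cleanly enough that both extensions can be read off from it; everything else is bookkeeping. Notably the argument never uses bipartiteness, so it holds for 1-extendable graphs in full generality, exactly as the cited statement claims.
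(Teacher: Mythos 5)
Your argument is correct. Note that the paper offers no proof of this lemma at all --- it is quoted verbatim from \cite{Plum} as Lemma 3.1 --- so there is nothing internal to compare against; your parity argument is the standard one and is complete. The two pivots both check out: (i) since every edge leaving a component $C_i$ of $\Gamma-v$ must end at the cutvertex $v$, any perfect matching matches each component other than the one containing $v$'s partner entirely within itself, and (ii) 1-extendability supplies two perfect matchings, one through $vw_1$ and one through $vw_2$, whose induced parities of $|C_2|$ disagree. The preliminary bookkeeping (existence of a perfect matching, connectedness from Definition \ref{extendable}, at least four vertices once $K_2$ is excluded) is also handled correctly, and you are right that bipartiteness is never needed.
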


This gives the desired result.

\begin{proposition}
\label{prop:Gamma2conn}
The balanced overlaid Tait graph $\Gamma$ for a prime-like knot diagram with no nugatory crossings is 2-connected.
\end{proposition}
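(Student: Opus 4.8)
The plan is to assemble the three facts established immediately above into a short chain of implications, so that no new argument is required. First I would invoke Theorem~\ref{thm:elementary}, whose hypotheses (the diagram is prime-like and has no nugatory crossings) coincide exactly with the standing assumptions here; it guarantees that $\Gamma$ is an elementary bipartite graph. By the observation immediately following Definition~\ref{extendable}---that is, by clause~(5) of Theorem~\ref{thm:elementary:LP}, which identifies being \emph{elementary} with being connected and having every edge allowed---every elementary bipartite graph is $1$-extendable. Hence $\Gamma$ is $1$-extendable.

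Next I would apply Lemma~\ref{lem:1ext2conn}, which asserts that every $1$-extendable graph other than $K_2$ is $2$-connected. To invoke it I must rule out the degenerate case $\Gamma=K_2$. This is immediate from the Periphery Proposition~\ref{prop:PropertyK}: every black vertex of $\Gamma$ has valence two or three, whereas both vertices of $K_2$ have valence one, so $\Gamma\neq K_2$. (Equivalently, a genuine knot diagram has at least one crossing and hence $\Gamma$ consists of more than a single edge.) The lemma then yields that $\Gamma$ is $2$-connected, as claimed.

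I do not expect a genuine obstacle in this proposition: its substance was already absorbed into Theorem~\ref{thm:elementary}, and the remaining work is purely formal. The only points needing care are to confirm that the hypotheses of the three cited results line up---each presumes a prime-like diagram with no nugatory crossings, which are precisely the standing assumptions---and to dispose of the trivial exception $\Gamma=K_2$ via the valence count above. With those checks in place the result follows by composition alone.
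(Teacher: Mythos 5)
Your proposal is correct and follows exactly the route the paper takes: Theorem~\ref{thm:elementary} gives that $\Gamma$ is elementary, Theorem~\ref{thm:elementary:LP}~(5) upgrades this to $1$-extendable, and Lemma~\ref{lem:1ext2conn} then yields $2$-connectivity (the paper leaves this chain implicit, stating only ``This gives the desired result''). Your explicit disposal of the $\Gamma=K_2$ case via the Periphery Proposition is a small but welcome addition that the paper omits.
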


\section{Partitioning the vertex set into leaves and cycles}
\label{sec:partition}


The main idea of this section is to partition the vertices of the graph $\Gamma=\Gamma_1$ into leaves $\ell\in L$ and cycles $C_i$, denoting by $\Gamma_i$ the  \emph{interior graph} within and including the cycle $C_i$.  The proof of the main theorem of this section, Theorem \ref{prop:concentriccircles}, will involve two induction steps:  first the existance of the next cycle and second that it satisfies the Periphery Proposition \ref{prop:PropertyK}.

These cycles $C_i$ emerge when the symmetric difference is taken of the clocked and counterclocked states $\widehat{0}$ and $\widehat{1}$ of Kauffman's clock lattice!

\bigskip

\subsection{Some final useful tools}
\label{subsec:lemmas}
  Again we make specific mention of the following definitions, as they may be unfamiliar to many readers.

%
%

\begin{definition}
\label{def:resonant}
A cycle (or a path) of $\Gamma$ is called \emph{$\mu$-alternating} if the edges of the cycle (or path) appear alternately in the perfect matching $\mu$ and $E(\Gamma)\backslash \mu$.  A face of a 2-connected plane bipartite graph is called \emph{resonant} if its boundary is a $\mu$-alternating cycle for some perfect matching $\mu$.
\end{definition}

The next theorem is the reason we will be concerned with proving graphs are 2-connected and elementary.

\begin{theorem}
\label{thm:elementary:ZhaZha}
\cite[Theorem 2.4.]{ZhaZha}
Given a plane bipartite graph with more than two vertices, each face (including the infinite face)
 is resonant if and only if the graph is elementary.
\end{theorem}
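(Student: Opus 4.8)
The plan is to reduce the topological notion of a \emph{resonant} face to the purely combinatorial condition that its boundary is a \emph{nice} (conformal) cycle, and then prove the two implications separately, leaning on the bipartite equivalences of Theorem \ref{thm:elementary:LP} and on the fact (Lemma \ref{lem:1ext2conn}) that an elementary bipartite graph on more than two vertices is $2$-connected. Write $G$ for the plane bipartite graph in question, with colour classes $U$ and $W$. Because $G$ is bipartite, every cycle is even; in particular the boundary cycle $Z$ of a face is an even cycle. First I would record the reduction: a face with boundary $Z$ is resonant if and only if $G-V(Z)$ has a perfect matching. Indeed, if $Z$ is $\mu$-alternating then the edges of $\mu$ lying on $Z$ form a perfect matching of $Z$, so the remaining edges of $\mu$ perfectly match $G-V(Z)$; conversely, given a perfect matching $M_0$ of $G-V(Z)$, adjoining either of the two perfect matchings of the even cycle $Z$ yields a perfect matching $\mu$ of $G$ for which $Z$ is $\mu$-alternating. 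Since resonance is only defined for a $2$-connected graph (so that each face is bounded by a single cycle), both sides of the stated equivalence may be taken to live in the $2$-connected setting.

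For the direction ``every face resonant $\Rightarrow$ elementary'' the argument is short. Each resonant face supplies a perfect matching $\mu$, and along its $\mu$-alternating boundary every edge lies either in $\mu$ or in the perfect matching $\mu \mathbin{\triangle} E(Z)$ obtained by switching along $Z$; hence every edge on the boundary of a face is allowed. In a $2$-connected plane graph each edge bounds exactly two faces, so \emph{every} edge of $G$ is allowed. As $G$ is also connected, Theorem \ref{thm:elementary:LP}(5) immediately yields that $G$ is elementary.

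The substantial direction is ``elementary $\Rightarrow$ every face resonant.'' By Lemma \ref{lem:1ext2conn} an elementary graph with more than two vertices is $2$-connected, so each face is bounded by an even cycle $Z$, and by the reduction above it suffices to produce a perfect matching of $G-V(Z)$ for every facial $Z$. My plan is to induct using a bipartite ear decomposition of an elementary bipartite graph (Lov\'asz--Plummer): $G$ is built from a single edge $K_2$ by successively adjoining odd ``ears,'' every intermediate graph remaining elementary, and in the plane each ear can be drawn inside a single face, subdividing it into two. The inductive hypothesis is that every face of the current elementary plane subgraph is resonant. One first checks that the faces left unchanged by an ear addition stay resonant: an odd ear contributes an even number of new internal vertices, which match among themselves along the ear, so any perfect matching of $G-V(Z')$ extends over the new vertices. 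The base case $K_2$ is vacuous.

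The main obstacle is precisely the remaining step of this induction: when an odd ear with endpoints $a,b$ is drawn across a resonant face, it splits the facial cycle $Z$ into two new facial cycles, and one must verify that \emph{both} inherit nice (alternating) boundaries, even though a naive ear need not respect the parity pattern of the matching already present on $Z$. The technical heart is therefore a planarity-plus-parity argument guaranteeing that the ears can be chosen, or the ambient matching adjusted, compatibly on the two arcs of $Z$. Here I would exploit $2$-connectivity together with Menger's Theorem \ref{thm:Menger} to route the requisite disjoint alternating paths, exactly in the spirit of the proof of Theorem \ref{thm:elementary}, thereby ensuring that $G-V(Z)$ can always be perfectly matched and closing the induction.
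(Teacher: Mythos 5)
This statement is quoted from Zhang and Zhang \cite[Theorem 2.4]{ZhaZha}; the paper gives no proof of its own, so your attempt can only be judged on its internal completeness. Your preliminary reduction (a face with boundary $Z$ is resonant iff $G-V(Z)$ has a perfect matching) and the direction ``every face resonant $\Rightarrow$ elementary'' are correct: switching along the alternating facial cycle shows every boundary edge is allowed, every edge of a $2$-connected plane graph lies on a face boundary, and Theorem \ref{thm:elementary:LP}(5) finishes it.

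The converse, which is the substance of the theorem, is not proved: you name the inductive step as ``the main obstacle'' and then only gesture at how it might be overcome. Two concrete things are missing. First, your induction needs an ear decomposition of the plane elementary bipartite graph in which every ear is attached inside a single face of the current plane subgraph (a so-called reducible face decomposition); the Lov\'asz--Plummer ear decomposition gives no such planarity guarantee, and the existence of a planarity-compatible one is itself a nontrivial theorem (indeed one of Zhang--Zhang's main results, proved alongside the statement you are trying to establish). Second, even granting such a decomposition, the step that both faces created by an ear are resonant is exactly where all the work lies, and invoking Menger ``in the spirit of Theorem \ref{thm:elementary}'' cannot close it: that argument only shows each edge individually lies in \emph{some} perfect matching, whereas resonance demands a single perfect matching that alternates around the entire facial cycle simultaneously --- a strictly stronger, global condition. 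As written, the hard direction remains an acknowledged gap rather than a proof.
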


Denote by $q(G)$ the number of components of a graph $G$ with an odd number of vertices.  This theorem shall be used below.

\begin{theorem}
\label{thm:odd}
\cite[Theorem 2.2.1. (Tutte 1947)]{Diest}
A graph $G$ has a perfect matching if and only if the number of odd components $q(G-S)\leq |S|$ for all subsets $S\subseteq V(G)$.
\end{theorem}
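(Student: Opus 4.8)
The plan is to prove both directions, with the forward implication routine and the reverse implication carrying all the difficulty. For necessity, suppose $G$ has a perfect matching $M$ and fix any $S\subseteq V(G)$. Each odd component of $G-S$ has an odd number of vertices, so $M$ cannot match all of them internally; hence at least one vertex of each odd component is matched by $M$ to a vertex of $S$. Distinct odd components use distinct vertices of $S$, so $q(G-S)\leq|S|$. This settles the ``only if'' direction immediately.

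For sufficiency I would argue by contradiction using an edge-maximal counterexample. Suppose $G$ satisfies $q(G-S)\leq|S|$ for all $S$ but has no perfect matching. First I would observe that adding an edge never increases $q(\,\cdot-S)$ for any fixed $S$: if an endpoint lies in $S$ the graph $(G+e)-S$ is unchanged, and otherwise joining two components either merges two odd components into an even one (decreasing the count by two), absorbs an even component, or stays within one component, so in every case the odd-component count does not grow. Thus the Tutte condition is preserved under adding edges. Since the complete graph on $|V(G)|$ vertices has a perfect matching (note $|V(G)|$ is even, taking $S=\emptyset$), I may choose an edge-maximal supergraph $G'$ of $G$ on the same vertex set with no perfect matching; then $G'$ inherits the Tutte condition, and by maximality $G'+e$ has a perfect matching for every non-edge $e$.

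Let $S$ be the set of vertices of $G'$ adjacent to every other vertex. The heart of the argument is the \textbf{Claim} that every component of $G'-S$ is complete. Granting this, a perfect matching of $G'$ can be assembled directly: match one vertex of each odd component of $G'-S$ to a distinct vertex of $S$ (possible since $q(G'-S)\leq|S|$ and the vertices of $S$ are universal), match the remaining even-order complete components internally, and pair up the leftover vertices of $S$ among themselves, where a parity count using that $|V(G')|$ is even shows their number is even. This perfect matching contradicts the choice of $G'$, completing the proof modulo the Claim.

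It remains to prove the Claim, which I expect to be the main obstacle. If some component of $G'-S$ is not complete, it contains vertices at distance two, giving a path $a\,b\,c$ with $ac\notin E(G')$; moreover $b\notin S$, so there is a vertex $d$ with $bd\notin E(G')$. By maximality, $G'+ac$ has a perfect matching $M_1\ni ac$ and $G'+bd$ has a perfect matching $M_2\ni bd$. I would then analyze the symmetric difference $M_1\triangle M_2$, a disjoint union of even cycles alternating between $M_1$ and $M_2$. If $ac$ and $bd$ lie on different cycles, flipping the single cycle through $bd$ (replacing its $M_2$-edges by $M_1$-edges) while keeping $M_2$ elsewhere yields a perfect matching of $G'$ avoiding both $ac$ and $bd$. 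If they lie on a common cycle $C$, one reroutes using the genuine edges $ab,bc\in E(G')$: traversing $C$ from $d$ to whichever of $a,c$ is reached first and splicing in the path through $b$ produces, after the correct choice of alternating edges, a perfect matching of $G'$ avoiding both added edges. Either way one contradicts the choice of $G'$, so every component of $G'-S$ is complete. The delicate bookkeeping in the common-cycle case, namely choosing which of $a,c$ to route to and verifying that the spliced edge set is a genuine perfect matching, is where the real work lies.
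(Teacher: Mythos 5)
This statement is Tutte's 1-factor theorem; the paper does not prove it but imports it verbatim from Diestel's textbook (the citation \cite[Theorem 2.2.1]{Diest}), so there is no in-paper argument to compare against. Your proposal is, in essence, the standard proof --- indeed the one given in the cited source: necessity by counting the $S$-vertices absorbed by odd components, sufficiency via an edge-maximal counterexample $G'$, the set $S$ of universal vertices, and the claim that every component of $G'-S$ is complete, proved by playing two matchings $M_1\ni ac$ and $M_2\ni bd$ against each other. The necessity direction, the preservation of the Tutte condition under edge addition, the parity bookkeeping for the leftover vertices of $S$, and the case where $ac$ and $bd$ lie on distinct cycles of $M_1\triangle M_2$ are all correct as written. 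The only part you leave genuinely open is the common-cycle case, and you say so; the idea you name (reroute through $b$ using the real edges $ab,bc\in E(G')$) is the right one. To close it cleanly, follow Diestel's variant: take a maximal $M_1$-$M_2$-alternating path $P$ starting at $d$ with an $M_1$-edge; if $P$ ends in an $M_1$-edge its endpoint must be $b$ and $C:=P+bd$ is alternating, while if it ends in an $M_2$-edge its endpoint $v$ lies in $\{a,c\}$ and $C:=dPvb\,d$ (splicing in the edge $vb$) is alternating; in either case $bd$ is the unique edge of $C$ outside $E(G')$, so $M_2\triangle E(C)$ is a perfect matching of $G'$, the desired contradiction. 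With that substitution your argument is complete and agrees with the cited proof.
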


We introduce the following two operations on cutvertices that will be used in the next subsection.

\begin{definition}
\label{def:pruning}
\textbf{``Pruning'' leaves.}  
Suppose a cutvertex is incident with a \emph{leaf}, or an edge incident with a one-valent vertex.  The operation called \emph{``pruning''} the leaf from a graph will mean deleting all edges adjacent to the leaf.  
A graph is \emph{``pruned''} when all of its leaves have been pruned.  Collect all of the leaves pruned in this way in the set $L$.  Note that here two vertices are deleted at a time, one from each vertex set if the graph is bipartite as below.
\end{definition}

\begin{definition}
\label{def:breaking}
\textbf{``Breaking'' cutvertices.}  
Suppose the deletion of a cutvertex would result in more than one component, each of which contains a cycle (when including the cutvertex).  Also suppose that there is exactly one component that has an odd number of vertices (not including the cutvertex).  The operation called \emph{``breaking''} the cutvertex from a graph will mean deleting all edges incident with the cutvertex except for those in the odd component.  A graph is \emph{``broken''} if there are no more cutvertices.
\end{definition}

\begin{remark}
These operations are non-standard in graph theory.  In other contexts, breaking a cutvertex might produce two graphs that both contain the cutvertex.  This name perhaps comes from the operation of breaking ``handcuffs'', where the tight handcuff graph is two cycles with a single vertex in common and the loose handcuff graph is two cycles connected by a path.
\end{remark}

Finally there is an argument that, while trivial, will occur in several places in the proof in the next subsection.  In particular, this will be used when a construction allows for the repeated use of new leaves rather than stopping at one of the other cases.

\begin{lemma}
\label{lem:finiteleaf}
\textbf{Finite Leaf Lemma.}
The can be only a finite number of leaves.
\end{lemma}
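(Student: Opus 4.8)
The plan is to exhibit a non-negative integer quantity that strictly decreases every time a leaf is pruned, and then to invoke the single genuine input: the graph $\Gamma$ is finite. Since $\Gamma$ arises from a knot diagram with finitely many crossings, it has a finite vertex set $V(\Gamma)$ and a finite edge set $E(\Gamma)$. Everything else is bookkeeping.

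First I would record the effect of each of the two operations on the vertex and edge counts. Pruning a leaf (Definition \ref{def:pruning}) deletes the one-valent vertex together with its incident edge --- in the bipartite setting, two vertices at a time, one from each color class, as noted there --- and it creates nothing new. Breaking a cutvertex (Definition \ref{def:breaking}) only deletes edges incident with the cutvertex, and likewise introduces no new vertices or edges. Hence every operation in the construction strictly decreases $|V|$ (in the case of pruning) or $|E|$ (in the case of breaking), and neither operation ever \emph{increases} either count.

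Then the argument is a standard descent on a well-ordering: the quantity $|V(\Gamma_i)|$ is a non-negative integer that drops by at least one (in fact two) with each pruning, and a strictly decreasing sequence of non-negative integers is finite. Thus only finitely many prunings can occur, and since each pruning contributes its collected leaf (or leaves) to the set $L$ while removing at least one vertex from a graph that began with at most $|V(\Gamma)|$ vertices, we obtain the bound $|L|\leq |V(\Gamma)|$, which is finite. The only point requiring any care --- and the reason the statement is worth isolating at all --- is the worry that ``new'' leaves keep appearing as the construction proceeds, so that one cannot a priori stop after disposing of the leaves visible at the outset. The resolution is precisely the monotonicity above: even though breaking a cutvertex or pruning one leaf may expose a previously hidden one-valent vertex, this can happen only finitely often because the underlying finite graph is being whittled down and is never enlarged. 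There is therefore no real obstacle beyond checking that neither operation adds vertices or edges, which is immediate from the two definitions.
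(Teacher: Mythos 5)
Your proof is correct and rests on exactly the same input as the paper's: the graph $\Gamma$ is finite because a tame knot diagram has finitely many crossings, hence finitely many vertices. The paper states only this one-line observation and leaves the monotone-descent bookkeeping implicit, whereas you spell it out; there is no substantive difference in approach.
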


\begin{proof}
There is only a finite number of vertices because there is only a finite number of crossings because the knot is tame.  Wild knots are not considered in this work.
\end{proof}

\bigskip

\subsection{Main Construction}
\label{subsec:MainConstruction}

Next is the important construction:

\begin{theorem}
\label{prop:concentriccircles}
Consider the balanced overlaid Tait graph $\Gamma$ for a prime-like knot diagram with no nugatory crossings.  Then the vertices can be partitioned into leaves $\ell\in L$ and cycles $C_i$, where each cycle $C_i$ satisfies the Periphery Proposition \ref{prop:PropertyK} and where each interior graph $\Gamma_i$ is elementary and 2-connected.
\end{theorem}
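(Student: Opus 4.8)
The plan is to prove the statement by induction on a concentric peeling of $\Gamma$, building the cycles $C_i$ one at a time from the outside inward. I set $\Gamma_1=\Gamma$ and take $C_1$ to be its periphery. The base case is already in hand: $\Gamma_1$ is elementary by Theorem \ref{thm:elementary} and 2-connected by Proposition \ref{prop:Gamma2conn}, and its periphery $C_1$ satisfies the Periphery Proposition \ref{prop:PropertyK} by construction. The inductive hypothesis is that $\Gamma_i$ is elementary and 2-connected with periphery $C_i$ satisfying the Periphery Proposition; the goal of the inductive step is to produce $\Gamma_{i+1}$ (and possibly several such interior graphs) with the same three properties, after recording the vertices of $C_i$ as accounted for in the partition.

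For the inductive step I would first delete the periphery $C_i$ from $\Gamma_i$. Because $\Gamma_i$ is 2-connected, its infinite face is bounded by an honest cycle, and since $\Gamma_i$ is bipartite this cycle is even and alternates colors; hence $C_i$ contains equally many black and white vertices, and its removal leaves a balanced bipartite graph. The two valence-two black vertices of $C_i$ contribute no inward edges, while the remaining periphery vertices may send edges inward, so that removing $C_i$ merely lowers the degree of certain interior vertices. The resulting graph need not be connected, 2-connected, or elementary, and this is precisely where the two operations of Subsection \ref{subsec:lemmas} enter: I prune every leaf into the set $L$ (Definition \ref{def:pruning}), removing one black and one white vertex at a time so that balance is preserved, and I break each cutvertex (Definition \ref{def:breaking}), whose parity condition is engineered so that the retained odd component still admits a perfect matching. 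Iterating pruning and breaking terminates by the Finite Leaf Lemma \ref{lem:finiteleaf} together with finiteness of the vertex set, and each surviving 2-connected piece is declared an interior graph $\Gamma_{i+1}$ with periphery $C_{i+1}$. Indeed these newly exposed boundaries should be exactly the alternating cycles appearing in the symmetric difference $\widehat{0}\oplus\widehat{1}$ of the clocked and counterclocked states, as anticipated before the statement.

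Two things then remain, matching the two induction steps flagged before the statement. First, \emph{existence and good structure of the next cycle}: each surviving piece has no cutvertex, hence is 2-connected, and I must show it is elementary. Here I would verify via Tutte's Theorem \ref{thm:odd} that breaking never destroys the existence of a perfect matching, so that the piece is 1-extendable and therefore 2-connected by Lemma \ref{lem:1ext2conn}; elementarity then follows from characterization \ref{thm:elementary:LP}(5) once every edge is shown to be allowed, for which resonance of faces (Theorem \ref{thm:elementary:ZhaZha}) and the $\mu$-alternating character (Definition \ref{def:resonant}) of the exposed boundary are the natural tools. Second, \emph{the Periphery Proposition for $C_{i+1}$}: by the remark following Proposition \ref{prop:PropertyK}, it suffices to check that $\Gamma_{i+1}$ is plane, bipartite, balanced, free of black leaves, and has black vertices of valence at most four; the Euler-characteristic count in the proof of Proposition \ref{prop:PropertyK} then forces exactly two valence-two black vertices. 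Since pruning removes black leaves and balance is maintained throughout, these hypotheses hold and the conclusion is automatic.

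I expect the main obstacle to be the bookkeeping that guarantees the processed interior is genuinely elementary and balanced rather than merely connected. The delicate point is that deleting a periphery can simultaneously unbalance a component and disconnect the interior into several pieces whose nested cycles are no longer linearly ordered; the breaking operation is designed around a parity condition (exactly one odd component) precisely to repair this, and the crux of the argument is to show, using Tutte's Theorem \ref{thm:odd}, that this condition can always be met and that the retained edges still lie in a common perfect matching, so that no edge becomes forbidden. Verifying that every edge of $\Gamma_{i+1}$ remains allowed---equivalently, that elementarity is inherited under pruning and breaking---is the heart of the proof and the step I would expect to require the most care.
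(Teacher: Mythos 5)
Your overall architecture matches the paper's: peel the periphery, prune leaves, break cutvertices (with Tutte's Theorem \ref{thm:odd} supplying the parity condition, exactly as in the paper's Lemma \ref{lem:odd}), and induct. But there are two genuine gaps in the two places you yourself flag as the substance of the induction. First, for the Periphery Proposition on the new cycle, your checklist --- plane, bipartite, balanced, no black leaves, black valence at most four --- is not enough to run the Euler count: that count needs the periphery length to be $2(n_2+n_3)$, i.e.\ it needs that \emph{no four-valent black vertex lies on the new periphery} $C_i$. For the original $\Gamma$ this is Lemma \ref{lem:nofourvalentblack}, whose proof exploits the square faces of $\widehat{\Gamma}$ and the fact that no black vertices were deleted; that argument does not transfer to $C_i$, because a four-valent black vertex on $C_i$ would close a square face with a black vertex $v_2$ lying \emph{between} $C_{i-1}$ and $C_i$, and one must separately rule out $v_2$ sitting on $C_{i-1}$, on a leaf, or on a further interior cycle. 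This is the content of the paper's Sublemma \ref{sublem:nofourvalentblackcycles}, proved by an exhaustion argument using the Inchworm Lemma \ref{lem:inchworm} and the Finite Leaf Lemma; your proposal omits this step entirely.

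Second, your route to elementarity does not work as stated. The existence of a perfect matching (which is what Tutte's theorem and the parity condition give you) does not make a graph 1-extendable, so the chain ``has a perfect matching $\Rightarrow$ 1-extendable $\Rightarrow$ 2-connected by Lemma \ref{lem:1ext2conn}'' is invalid; and invoking resonance of faces via Theorem \ref{thm:elementary:ZhaZha} to prove that every edge is allowed is circular, since that theorem characterizes resonance of all faces as \emph{equivalent} to elementarity. The paper avoids this by a different device: it uses Proposition \ref{prop:UniqueUniverse} to reconstruct from $\Gamma_i$ a bona fide knot diagram $D_i$ (prime-like, no nugatory crossings, by construction), and then reapplies the Menger's-theorem argument of Theorem \ref{thm:elementary} to $D_i$. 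You need either that reconstruction step or some other direct proof that every edge of $\Gamma_i$ lies in a perfect matching; as written, your proposal identifies this as ``the heart of the proof'' but does not supply an argument for it.
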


By Theorem \ref{thm:elementary:ZhaZha}, these two properties of $\Gamma_i$ allow us to use induction below.

\begin{corollary}
Every face (and specifically the periphery $C_i$) of each interior graph $\Gamma_i$ is resonant.
\end{corollary}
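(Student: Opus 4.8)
The plan is to deduce this directly from the Zhang--Zhang resonance Theorem~\ref{thm:elementary:ZhaZha}, feeding it the two structural properties of $\Gamma_i$ that the Main Construction Theorem~\ref{prop:concentriccircles} has just supplied. That theorem guarantees that each interior graph $\Gamma_i$ is both elementary and 2-connected, and these are precisely the hypotheses one needs even to speak of resonance: recall that Definition~\ref{def:resonant} only declares faces resonant in a 2-connected plane bipartite graph.

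First I would verify that each $\Gamma_i$ meets the standing hypotheses of Theorem~\ref{thm:elementary:ZhaZha}. Since $\Gamma$ is plane and bipartite (as noted in the Remark following the definition of $\Gamma$), any interior graph $\Gamma_i$, being the subgraph within and including the cycle $C_i$, inherits both planarity and the bipartition. Moreover, because $\Gamma_i$ contains the cycle $C_i$ and every cycle in a bipartite graph has length at least four, $\Gamma_i$ has more than two vertices, so the ``more than two vertices'' clause of the cited theorem is satisfied.

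Next I would invoke the equivalence. Theorem~\ref{thm:elementary:ZhaZha} states that for such a graph every face, including the infinite face, is resonant if and only if the graph is elementary. Since Theorem~\ref{prop:concentriccircles} tells us that $\Gamma_i$ is elementary, the relevant direction gives at once that every face of $\Gamma_i$ is resonant. It then remains only to identify the named face: by construction the periphery $C_i$ is the boundary cycle of the infinite face of $\Gamma_i$, so it is exactly the case covered by the phrase ``(including the infinite face),'' which yields the specifically advertised resonance of $C_i$.

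There is essentially no hard step here; all of the content has been front-loaded into Theorem~\ref{prop:concentriccircles}, which establishes elementarity and 2-connectivity. The only points requiring a moment's care are bookkeeping: confirming that $\Gamma_i$ has more than two vertices so that the cited theorem applies, and recording that the periphery $C_i$ is precisely the boundary of the infinite face of $\Gamma_i$, so that the asserted resonance of $C_i$ is literally the infinite-face instance of Theorem~\ref{thm:elementary:ZhaZha}.
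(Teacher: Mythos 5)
Your proposal is correct and follows exactly the paper's route: the corollary is an immediate application of Theorem~\ref{thm:elementary:ZhaZha} to each $\Gamma_i$, using the elementarity and 2-connectedness supplied by Theorem~\ref{prop:concentriccircles}, with the periphery $C_i$ being the infinite-face instance. The paper treats this as immediate (it states only that Theorem~\ref{thm:elementary:ZhaZha} together with these two properties gives the result), and your additional bookkeeping about planarity, bipartiteness, and the vertex count is a harmless elaboration of the same argument.
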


\begin{proof}[Proof of Theorem \ref{prop:concentriccircles}]
The periphery $C=C_1$ on the infinte face satisfies the Periphery Proposition \ref{prop:PropertyK}, and $\Gamma=\Gamma_1$ is elementary by Theorem \ref{thm:elementary} and 2-connected by Proposition \ref{prop:Gamma2conn}.  The proof will proceed by constructing the next $C_{i}$ assuming that for all $j<i$ all previous $C_{j}$ already satisfy the Periphery Proposition \ref{prop:PropertyK} and that all previous $\Gamma_{j}$ are elementary and 2-connected.

Thus each previous periphery $C_j$ is resonant, and so there is a perfect matching $\mu$ such that $C_j$ is $\mu$-alternating.  This ensures that when the periphery $C_j$ is deleted from the interior graph $\Gamma_j$, the remaining graph still has some perfect matching:  specifically $\mu$ restricted to $\Gamma_j\backslash\{C_j\}$.

\smallskip

\textbf{Construction.}  Delete all the edges incident with vertices in the cycle $C_{i-1}$ from the graph $\Gamma_{i-1}$ to obtain a new graph $\Gamma_{i}'$, and consider the edges $C_{i}'$ on the new periphery.  If $C_{i}'$ has several components, treat each $C_{i}',C_{i+1}',\ldots$ separately.  If some component $C_{i}'$ is indeed a single cycle with no cutvertices, then set $C_{i}=C_{i}'$ and $\Gamma_{i}=\Gamma_{i}'$.

Otherwise there is some cutvertex, possibly on a leaf.  One can employ some sequence of ``pruning'' leaves and ``breaking'' cutvertices as follows.  It is important to note here that when there are several cutvertices, the leaves and cycles must be connected in a tree-like way, and so one can start by the outermost edges of this tree.

From pruning leaves one obtains a new set of outer edges $C_{i}''$ of the new interior graph $\Gamma_{i}''$.  If this is indeed a single cycle with no cutvertices, then set $C_{i}=C_{i}''$ and $\Gamma_{i}=\Gamma_{i}''$.  Collect the leaves pruned in this way in the set $L_{i-1}$.  Note that here two vertices are deleted at a time, preserving the property that the interior graph $\Gamma_{i}''$ has equal-sized vertex sets.

In order to break additional cutvertices, one must show that there is exactly one component that has an odd number of vertices when the cutvertex is deleted from $\Gamma_{i}''$.

\begin{lemma}
\label{lem:odd}
Suppose the graph $\Gamma_{i}'$ (or $\Gamma_{i}''$) has a cutvertex.  Then after deleting this cutvertex, exactly one connected component has an odd number of vertices.
\end{lemma}

\begin{proof}
%
First observe by Theorem \ref{thm:elementary:ZhaZha} on the 2-connected elementary interior graph $\Gamma_{i-1}$ that there must be a perfect matching $\mu$ such that the periphery $C_{i-1}$ of $\Gamma_{i-1}$ is $\mu$-alternating.  This perfect matching $\mu$ must include the leaves deleted above.



Now consider the graph $\Gamma_{i}'$ (or $\Gamma_{i}''$) with the subset $S$ being just the cutvertex as in the statement of Theorem \ref{thm:odd}.  Then the number of odd components is at most one.  Because the graph is bipartite, the number of odd components must be odd, so this number is indeed one.
%
\end{proof}

From breaking the cutvertex one obtains several components with peripheries $C_{i}''',C_{i+1}''',\ldots$ of new interior graphs $\Gamma_{i}''',\Gamma_{i+1}''',\ldots$ for each of the components.  Repeat this process until each of these is indeed a single cycle with no cutvertices, then set $C_j=C_j'''$ and $\Gamma_j=\Gamma_j'''$ for $j\geq{i}$.

\smallskip



%
\textbf{Cycles satisfy the Periphery Proposition \ref{prop:PropertyK}. }  After a brief useful lemma, the proof continues by showing that $C_{i}$ satisfies the Periphery Proposition \ref{prop:PropertyK}.

\begin{lemma}
\label{lem:inchworm}
\textbf{Inchworm Lemma.}  
A square face $f$ in $\Gamma_{i-1}$ cannot have exactly one edge on $C_i$ while its opposite edge is a leaf $\ell\in L_{i-1}$; otherwise $C_i$ could have been extended to include $\ell$ and $f$ would be a part of $\Gamma_i$.
\end{lemma}

\begin{proof}
This follows from the construction of the cycle $C_i$ in $\Gamma_{i-1}$.
\end{proof}


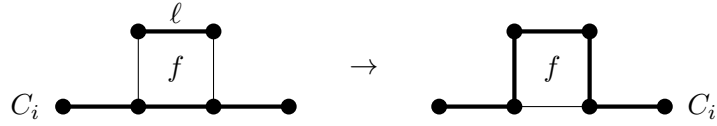
\begin{figure}[h]
\begin{center}
\begin{tikzpicture}

\draw (-.5,0) node {$C_i$};
\draw[ultra thick] (0,0) -- (3,0);
\draw (1,0) -- (1,1) -- (2,1) -- (2,0);
\draw[ultra thick] (1,1) -- (2,1);
\draw (1.5,.5) node {$f$};
\draw (1.5,1.25) node {$\ell$};

\foreach \x/ \y in {0/0, 1/0, 1/1, 2/1, 2/0, 3/0}
	{			\fill[color=black] (\x,\y) circle (3pt);				}

\draw (4,.5) node {$\rightarrow$};

\begin{scope}[shift={(5,0)}]

\draw (3.5,0) node {$C_i$};
\draw (1.5,.5) node {$f$};

\draw[ultra thick] (0,0) -- (1,0) -- (1,1) -- (2,1) -- (2,0) -- (3,0);
\draw (1,0) -- (2,0);

\foreach \x/ \y in {0/0, 1/0, 1/1, 2/1, 2/0, 3/0}
	{			\fill[color=black] (\x,\y) circle (3pt);				}

\end{scope}

\end{tikzpicture}
	\caption{The situation on the left cannot occur by the Inchworm Lemma \ref{lem:inchworm}.  The vertex sets are undistinguished.}
	\label{fig:inchworm}
\end{center}
\end{figure}

\begin{lemma}
\label{lem:cyclesPropertyK}
\textbf{Periphery Proposition Lemma.}  The periphery $C_i$ of the internal graph $\Gamma_i$ satisfies the Periphery Proposition \ref{prop:PropertyK}.
\end{lemma}

\begin{proof}
In order to apply the same proof of the Periphery Proposition \ref{prop:PropertyK}, we must ensure that only two-valent and three-valent black vertices can be on $C_i$.  It is clear that there are no black leaves by construction.  Thus together with Sublemma \ref{sublem:nofourvalentblackcycles} the proof can be applied in this case.
\end{proof}

\begin{sublemma}
\label{sublem:nofourvalentblackcycles}
There can be no four-valent black vertices on the cycle $C_i$.
\end{sublemma}

\begin{proof}
%
Suppose by way of contradiction that there is a four-valent black vertex $v_1$ on the periphery $C_i$ of the interior graph $\Gamma_i$; note that $v_1$ cannot be a cutvertex as these were broken above.  Then there are two white neighbors $u_1$ and $u_2$ of $v_1$ that are also neighbors of $v_1$ on the cycle $C_i$.  Then $u_1,v_1,u_2$ must form a square face with some other black vertex $v_2$ in the interior graph $\Gamma_{i-1}$ but outside of the interior graph $\Gamma_i$.  We may assume by induction hypothesis that $v_2$ cannot be on $C_{i-1}$, or else it would be four-valent there.  Also this black vertex cannot be on $C_i$ or the cycle would extend through to include this face.

Then the black vertex $v_2$ must either be on a leaf or on another interior cycle $C_{i+1}$.  If it is on a leaf with white vertex $u_3$, then with either $u_1$ or $u_2$ this leaf forms a square face with some other black vertex $v_3$, which also cannot be on $C_{i-1}$ as above or on $C_i$ by the Inchworm Lemma \ref{lem:inchworm}.  By the Finite Leaf Lemma \ref{lem:finiteleaf}, this process eventually terminates, as so we may assume that our original other black vertex $v_2$ is on another interior cycle $C_{i+1}$.

Then $v_2$ has two white neighbors $u_3$ and $u_4$ on the cycle $C_{i+1}$.  It can have no other neighbors since a black vertex is at most four-valent.  Then $u_1$, $v_2$, and $u_3$ are on a square face in $\Gamma_{i-1}$ that also contains some black vertex $v_3$; note additionally that $u_2$, $v_2$, and $u_4$ are on a square face in $\Gamma_{i-1}$ that also contains some black vertex $v_4$, but we need only consider one of these.  Following a similar argument as above, this black vertex cannot appear on $C_{i-1}$ as it would be four-valent or $C_i$ or $C_{i+1}$ as this would alter the cycle structure.  It may appear on a leaf, but by the Finite Leaf Lemma \ref{lem:finiteleaf} this proceuss eventually terminates.  The only remaining option is for $v_3$ to appear on a new interior cycle $C_{i+2}$.

However, this situation again forces two new black vertices that must be handled according to the above arguments.  What is more is that no new black vertex created can appear on a previous interior cycle, as it would alter the cycle structure.  Thus ultimately all new interior cycles and leaves are exhausted and there are no remaining options for the black vertex, a contradiction.
\end{proof}

\smallskip

\textbf{$\Gamma_i$ is 2-connected and elementary.}  The interior graph $\Gamma_{i}$ is 2-connected by construction: it is connected because each component was considered separately, and all cutvertices were removed after ``pruning'' leaves and ``breaking'' cutvertices.

To prove that the interior graph $\Gamma_{i}$ is elementary, we apply the proof of Theorem \ref{thm:elementary} to the interior graph $\Gamma_{i}$ after it has been turned into a diagram $D_{i}$ by Proposition \ref{prop:UniqueUniverse}.  Note that this diagram is prime-like with no nugatory crossings by construction.  It does not matter if the diagram represents a knot or a link with several components.
\end{proof}

\begin{remark}
As inferred by the end of the proof, the interior graphs $\Gamma_i$ are in fact themselves balanced overlaid Tait graphs.
\end{remark}

\begin{question}
Can the balanced overlaid Tait graphs $\Gamma_i$ be related to each other in an analogous way to tangles and sub-tangles?
\end{question}

\section{Reduction moves and black two-valent vertices}
\label{sec:2valent}

The purpose of this section is to highlight several local moves that simplify the structure of the interior graph $\Gamma_{i-1}$.  These will be used to prove the main result, Theorem \ref{conj:sum}, on the height of Kauffman's clock lattice.

The first reduction move can be used to simplify each interior graph $\Gamma_{i-1}$ by removing leaves without altering the structure of the cycles.

\begin{proposition}
\label{prop:leafreduction}
\textbf{Leaf Reduction Proposition.}
Suppose the interior graph $\Gamma_{i-1}$ obtained from a knot diagram that has no nugatory crossings contains some alternating path of leaves.  Then this graph can be reduced to one without the leaves by a local move as depicted in Figure \ref{fig:leafreduction}.
\end{proposition}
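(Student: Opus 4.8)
The plan is to show that the local move of Figure~\ref{fig:leafreduction} is an \emph{isomorphism on the level of perfect matchings}, so that it removes vertices from $\Gamma_{i-1}$ while leaving the graph $\mathcal{G}$ of perfect matchings---and hence, by Proposition~\ref{prop:height}, the height $h$---unchanged. The starting observation is that every leaf is a \emph{forced} edge: its degree-one endpoint can be saturated only by the single edge meeting it, so that edge lies in every perfect matching of $\Gamma_{i-1}$. Along an \emph{alternating path of leaves} this forcing cascades, alternating in the sense of Definition~\ref{def:resonant}: once a leaf edge is declared to be in the matching it saturates its non-pendant endpoint, which forbids the next edge of the path and forces the one after it, and so on. I would record the resulting ``in'' and ``out'' assignments as a fixed set $F$ of edges that is literally the same for every perfect matching $\mu$ of $\Gamma_{i-1}$; the Finite Leaf Lemma~\ref{lem:finiteleaf} guarantees that this cascade terminates.

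Next I would describe the move as the deletion of the vertices and edges carried by $F$---equivalently, the contraction of the alternating path of leaves shown in Figure~\ref{fig:leafreduction}---and verify that it produces a legitimate smaller graph $\Gamma_{i-1}'$. Since $\Gamma_{i-1}$ is bipartite, each forced ``in'' edge deletes exactly one black and one white vertex, so balance is preserved and $\Gamma_{i-1}'$ is again a plane bipartite graph of the expected form. The central claim is then that $\mu \mapsto \mu \setminus F$ is a bijection between the perfect matchings of $\Gamma_{i-1}$ and those of $\Gamma_{i-1}'$, with inverse $\mu' \mapsto \mu' \cup F_{\mathrm{in}}$; this is well defined precisely because $F$ is common to all matchings.

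It remains to see that this bijection respects the edge set of $\mathcal{G}$ and leaves the cycle structure alone. A flip (clock) move, as in Figure~\ref{fig:clockmove}, toggles the four edges of a \emph{square face}, and a degree-one vertex bounds no square face; hence no flip move ever touches the forced set $F$, and the bijection carries flip moves to flip moves bijectively. Thus $\mathcal{G}(\Gamma_{i-1}) \cong \mathcal{G}(\Gamma_{i-1}')$, the clocked and counterclocked states correspond, and $h$ is unchanged. The peripheries $C_j$ are also untouched, because a leaf lies on no cycle; and by the Inchworm Lemma~\ref{lem:inchworm} no square face adjacent to a removed leaf has its opposite edge on a cycle, so deleting the path does not merge, split, or shorten any $C_j$.

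The step I expect to be the main obstacle is the \emph{boundary behavior} of the move at the two ends of the alternating path. One must check that where the path meets the rest of $\Gamma_{i-1}$ the local reconnection of Figure~\ref{fig:leafreduction} neither creates nor destroys a resonant square face that would change $\mathcal{G}$, and that the reduced graph $\Gamma_{i-1}'$ is again a genuine interior balanced overlaid Tait graph---still plane and bipartite with periphery valences as in the Periphery Proposition~\ref{prop:PropertyK}---so that the inductive apparatus of Theorem~\ref{prop:concentriccircles} continues to apply. I expect this to reduce to a short finite case analysis of the endpoint configurations against the figure, using the parity input of Lemma~\ref{lem:odd} and Theorem~\ref{thm:odd} to rule out the degenerate cases.
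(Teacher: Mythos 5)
Your argument rests on the premise that a leaf is an edge with a one-valent endpoint in $\Gamma_{i-1}$, hence a forced edge lying in every perfect matching, so that the move is an isomorphism on the level of perfect matchings. That premise is false in this setting. The leaves of Theorem \ref{prop:concentriccircles} are edges that become pendant only in the auxiliary graph obtained by deleting the periphery $C_{i-1}$ during the construction of the cycle partition; in $\Gamma_{i-1}$ itself both endpoints of the leaf $v_1u_1$ of Figure \ref{fig:leafreduction} are four-valent. Since $\Gamma_{i-1}$ is elementary, every edge at $v_1$ other than the leaf lies in some perfect matching, so the leaf does not lie in all of them, your set $F$ is not common to all matchings, and $\mu\mapsto\mu\setminus F$ is not even defined on all vertices of $\mathcal{G}$. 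Your conclusion is also false on its face: the move deletes the two square faces $f_{n+1}$ and $f_{n+2}$ of Figure \ref{fig:leafreductionlemma}, each of which is flipped exactly once in passing from $\widehat{0}$ to $\widehat{1}$, so the height $h$ drops by two per leaf rather than staying fixed; what is preserved is the equality $\sum_{i}s(C_i)=h$ (both sides drop together), and that bookkeeping is the content of the separate Lemma \ref{lem:mainleafreduction}, not of this proposition.

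What the proposition actually requires, and what the paper proves, is that the local configuration of Figure \ref{fig:leafreduction} genuinely arises around a leaf, so that the move is well defined: the four white neighbors $u_1,u_2,u_3,u_4$ of $v_1$ are distinct (identifying any two would create a bigon or a nugatory crossing present already in $\Gamma$); the square faces on $u_2,v_1,u_3$ and on $u_3,v_1,u_4$ supply black vertices $v_2,v_3$ which, if equal, produce a further nested leaf, and this cascade terminates by the Finite Leaf Lemma \ref{lem:finiteleaf}; similarly for $v_4,v_5$ above. The replacement is then legitimate because the resulting graph is still bipartite, all faces remain square, and the valences of the black vertices are unchanged. Your proposal does not address this existence question at all, so even setting aside the forcing error it does not establish the stated proposition.
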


\begin{figure}[h]
\begin{center}
\begin{tikzpicture}


\draw[ultra thick] (2,1) -- (2,2);
\draw (0,0) -- (4,0) -- (4,3) -- (2,2) -- (0,3) -- (0,0) -- (2,1) -- (4,0) -- cycle;
\draw (2,0) -- (2,3);
\draw (5,1.5) node {$\rightarrow$};
\draw (6,0) -- (10,0) -- (10,3) -- (8,0) -- (6,3) -- cycle;
\draw (8,0) -- (8,3);
\foreach \x/ \y in {0/0, 2/0, 4/0, 2/2, 6/0, 8/0, 10/0}
	{			\fill[color=white] (\x,\y) circle (3pt);
				\draw (\x,\y) circle (3pt);											}
\foreach \x/ \y in {1/0, 3/0, 2/1, 0/3, 2/3, 4/3, 7/0, 9/0, 6/3, 8/3, 10/3}
	{			\fill[color=black] (\x,\y) circle (3pt);				}
\draw (1,3) node {$\cdots$};
\draw (3,3) node {$\cdots$};
\draw (7,3) node {$\cdots$};
\draw (9,3) node {$\cdots$};

\draw (2.5,1.25) node {$v_1$};
\draw (2.5,1.75) node {$u_1$};
\draw (0,-.5) node {$u_2$};
\draw (1,-.5) node {$v_2$};
\draw (2,-.5) node {$u_3$};
\draw (3,-.5) node {$v_3$};
\draw (4,-.5) node {$u_4$};

\draw (0,3.5) node {$v_4$};
\draw (4,3.5) node {$v_5$};

\end{tikzpicture}
	\caption{The local operation of reduction on a single leaf.}
	\label{fig:leafreduction}
\end{center}
\end{figure}
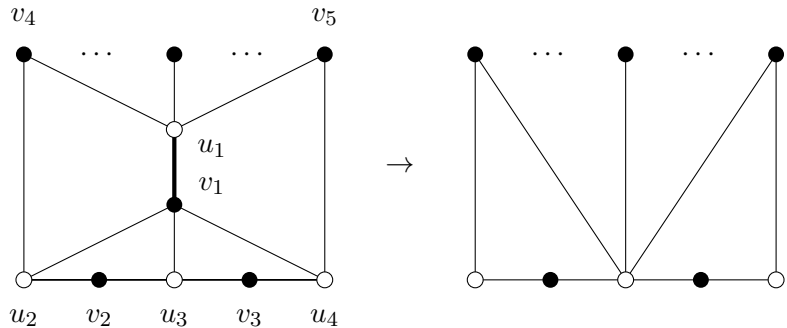

\begin{proof}
It is enough to show that the square faces depicted in the left hand side of Figure \ref{fig:leafreduction} arise.  Then they can be reduced to those that appear on the right hand side because the graph is still bipartite, all faces are still square, and the valence of the black vertices does not change.

Consider the black vertex $v_1$ on the leaf along with its white neighbors $u_1$, $u_2$, $u_3$, and $u_4$ as in the left hand side of Figure \ref{fig:leafreduction}.  Observe that these four neighbors must be distinct:  identifying any two forms either a bigon or a nugatory crossing that would be in both $\Gamma_{i-1}$ as well as $\Gamma$.  Then there is a square face containing $u_2$, $v_1$, and $u_3$ that must also contain some black vertex $v_2$ and a square face containing $u_3$, $v_1$, and $u_4$ that must also contain some black vertex $v_3$.

If $v_2=v_3$ are not distinct, this produces another leaf as in Figure \ref{fig:leafreductionB}, and there are again two black vertices to be considered.  Continue this process creating the alternating path of leaves, ending with two distinct black vertices because of the Finite Leaf Lemma \ref{lem:finiteleaf}.  This creates the square faces depicted in the lower parts of the images on the left hand side of Figures \ref{fig:leafreduction} and \ref{fig:leafreductionB}.

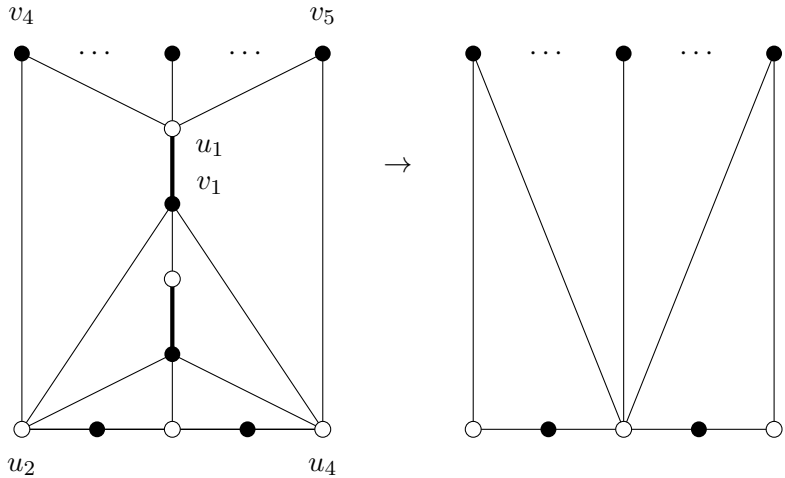
\begin{figure}[h]
\begin{center}
\begin{tikzpicture}


\draw[shift={(0,-5)}, ultra thick] (2,1) -- (2,2);
\draw[shift={(0,-5)}, ultra thick] (2,-1) -- (2,0);
\draw[shift={(0,-5)}] (0,-2) -- (4,-2) -- (4,3) -- (2,2) -- (0,3) -- (0,-2) -- (2,1) -- (4,-2) -- cycle;
\draw[shift={(0,-5)}] (2,-2) -- (2,3);
\draw[shift={(0,-5)}] (5,1.5) node {$\rightarrow$};
\draw[shift={(0,-5)}] (6,-2) -- (10,-2) -- (10,3) -- (8,-2) -- (6,3) -- cycle;
\draw[shift={(0,-5)}] (8,-2) -- (8,3);
\draw[shift={(0,-5)}] (0,-2) -- (2,-1) -- (4,-2);
\foreach \x/ \y in {0/-2, 2/-2, 4/-2, 2/0, 2/2, 6/-2, 8/-2, 10/-2}
	{			\fill[shift={(0,-5)}, color=white] (\x,\y) circle (3pt);
				\draw[shift={(0,-5)}] (\x,\y) circle (3pt);											}
\foreach \x/ \y in {1/-2, 3/-2, 2/-1, 2/1, 0/3, 2/3, 4/3, 7/-2, 9/-2, 6/3, 8/3, 10/3}
	{			\fill[shift={(0,-5)}, color=black] (\x,\y) circle (3pt);				}
\draw[shift={(0,-5)}] (1,3) node {$\cdots$};
\draw[shift={(0,-5)}] (3,3) node {$\cdots$};
\draw[shift={(0,-5)}] (7,3) node {$\cdots$};
\draw[shift={(0,-5)}] (9,3) node {$\cdots$};

\draw[shift={(0,-5)}] (2.5,1.25) node {$v_1$};
\draw[shift={(0,-5)}] (2.5,1.75) node {$u_1$};
\draw[shift={(0,-7)}] (0,-.5) node {$u_2$};
\draw[shift={(0,-7)}] (4,-.5) node {$u_4$};

\draw[shift={(0,-5)}] (0,3.5) node {$v_4$};
\draw[shift={(0,-5)}] (4,3.5) node {$v_5$};

	
\end{tikzpicture}
	\caption{The local operation of reductions on a path of leaves.}
	\label{fig:leafreductionB}
\end{center}
\end{figure}

To see the square faces in the upper parts of these images, observe that there is a square face containing $u_1$, $v_1$, and $u_2$ that must also contain some black vertex $v_4$ and a square face containing $u_1$, $v_1$, and $u_4$ that must also contain some black vertex $v_5$.  Even if $v_4=v_5$ are not distinct, this move can be performed.
\end{proof}

\begin{remark}
A generalized version of this reduction move can apply to leaves alternating on some tree by starting at one-valent white vertices on this tree, as in Figure \ref{fig:leafreductionB}.
\end{remark}

\begin{remark}
This Leaf Reduction Proposition move is in fact a number of smoothings on a twist region of the knot diagram, where the number of crossings in the twist region is given by the number of leaves in the path.
\end{remark}

\begin{proposition}
\label{prop:unstacking}
\textbf{Simply Connected Region Reduction Proposition.}
Let $H$ be an induced subgraph of the interior graph $\Gamma_{i-1}$ with all its vertices on $C_{i-1}$ such that all but one of the edges of the periphery of $H$ are on $C_{i-1}$.  
Then there is a two-valent black vertex of $C_{i-1}\subset\Gamma_{i-1}$ in $H$, and $H$ can be deleted from $\Gamma_{i-1}$ by a local move without changing the structure of interior cycles. 
\end{proposition}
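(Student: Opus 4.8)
The plan is to treat $H$ as a balanced overlaid Tait graph in its own right, extract the required two-valent vertex from the Periphery Proposition \ref{prop:PropertyK} applied to $H$, and then realize the deletion as a pendant reduction along the single chord that closes $H$ off.

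First I would record the shape of $H$. Since every vertex of $H$ lies on $C_{i-1}$, the region $H$ has no vertex interior to $\Gamma_{i-1}$, so all vertices of $H$ lie on its boundary cycle $\partial H$ (the periphery of $H$). By hypothesis this cycle is an arc $\alpha$ of $C_{i-1}$ together with a single further edge $e$, the \emph{chord}, whose endpoints $a,b$ are the two ends of $\alpha$. Because $\Gamma_{i-1}$ is bipartite, $\partial H$ is an even, two-colored cycle, so $H$ carries equally many black and white vertices and is balanced; it inherits from $\Gamma_{i-1}$ that it is plane and bipartite, that all its bounded faces are square, and that its black vertices have degree at most four. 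Thus $H$ meets the hypotheses of the Periphery Proposition \ref{prop:PropertyK} (compare the Remark following it, and the arguments of Lemmas \ref{lem:noblackleaves} and \ref{lem:nofourvalentblack}, which rule out black leaves and four-valent black boundary vertices of $H$), so $\partial H$ carries exactly two two-valent black vertices.

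For the first assertion I would promote one of these to a two-valent black vertex of $\Gamma_{i-1}$. The chord $e$ is a single edge, hence has exactly one black endpoint, so at most one of the two two-valent (in $H$) black vertices is an endpoint of $e$; at least one, call it $\beta$, lies in the interior of the arc $\alpha$, so both of its $H$-edges lie on $C_{i-1}$. I claim such a $\beta$ is already two-valent in $\Gamma_{i-1}$. Indeed, the two $C_{i-1}$-edges at $\beta$ are consecutive on the periphery of $\Gamma_{i-1}$, so on their exterior side lies the infinite face, while on their interior side lies a single square face $F$ of $\Gamma_{i-1}$, namely the face of $H$ at $\beta$. A square face admits no interior chord, and any hypothetical third edge of $\beta$ would have to run into $F$ (it cannot enter the infinite face), forcing its white endpoint to be a corner of $F$ and hence a vertex of $H$; since $H$ is induced this would give $\beta$ a third $H$-edge, contradicting two-valence in $H$. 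Therefore $\beta$ carries no third edge and is the desired two-valent black vertex of $C_{i-1}$ lying in $H$.

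For the second assertion I would exploit that $H$ is attached to the rest of $\Gamma_{i-1}$ only along the chord. As the unique periphery edge of $H$ off $C_{i-1}$, the edge $e$ separates $H$ from the remainder of $\Gamma_{i-1}$, the two graphs sharing only $a,b$. The local move deletes every vertex and edge of $H$ except $a$, $b$ and $e$, splicing $e$ into the periphery in place of the arc $\alpha$; a short check shows the result is again a balanced overlaid Tait graph, since it stays plane, bipartite and square-faced, black degrees only drop, and equally many black and white vertices are removed. Finally, every interior cycle $C_j$ with $j\ge i$ is vertex-disjoint from $V(H)\subseteq C_{i-1}$ and lies strictly on the far side of $e$; as the move changes nothing there, the interior cycle structure is untouched. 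The step I expect to be the main obstacle is the first assertion — precisely the verification that $H$ inherits the Periphery Proposition hypotheses despite its boundary vertices possibly shedding edges to the exterior, together with the rotation argument promoting a two-valent vertex of $H$ to one of $\Gamma_{i-1}$; once the single-chord attachment is in hand, the deletion is routine.
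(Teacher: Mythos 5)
Your proof is correct, but it reaches the two-valent black vertex by a genuinely different route than the paper. The paper's proof works inward from the chord $e=v_1u_1$: since the black endpoint $v_1$ is three-valent on $C_{i-1}$, the square face of $H$ incident to $e$ has its fourth (black) vertex back on $C_{i-1}$, so that face can be peeled off to leave a smaller region of the same type; iterating, $H$ is ``unstacked'' down to a single square with all four vertices on $C_{i-1}$, whose middle black vertex is the desired two-valent one, and the unstacking itself constitutes the local deletion move. You instead apply the Euler-characteristic count of the Periphery Proposition \ref{prop:PropertyK} directly to $H$ --- which is legitimate, since $H$ is balanced, plane, bipartite and square-faced with no black leaves and no four-valent black boundary vertices, exactly the hypotheses the paper itself re-verifies for $C_i$ in Lemma \ref{lem:cyclesPropertyK} and for the accordion region in Lemma \ref{lem:accordionnofourvalentblackcycles} --- obtain two black vertices that are two-valent in $H$, note that at most one of them can be the black endpoint of the chord, and promote the other to a two-valent vertex of $\Gamma_{i-1}$ by the planarity argument that any third edge would have to enter the square face of $H$ at that vertex. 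Your route is shorter and avoids the induction, at the cost of leaning on the counting lemma; the paper's unstacking is more constructive and simultaneously exhibits the face-by-face deletion in the form in which it is reused later (Lemma \ref{lem:mainunstackingreduction}). Both arguments rest on the same implicit reading of the hypothesis, namely that the bounded faces of $H$ are genuine square faces of $\Gamma_{i-1}$ (so $H$ is a union of faces of $\Gamma_{i-1}$); this is what ``simply connected region'' is intended to mean, and your promotion step, like the paper's, would not go through without it.
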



\begin{proof}
Consider the two vertices $v_1$ and $u_1$ on the edge $e$ not on $C_{i-1}$.  By the Periphery Proposition Lemma \ref{lem:cyclesPropertyK} the black vertex $v_1$ must be three-valent on $C_{i-1}$ and so has a white neighbor $u_2$ on $H$.  Then there is a square face in $H$ containing $u_1$, $v_1$, $u_2$, and some black vertex $v_2$, which must also be on $C_{i-1}$, as a neighbor of either $u_1$ or $u_2$.  Without loss of generality we may say it is $u_1$; then there is an edge $u_2v_2$ of this square face that encloses a new induced subgraph $H'$ together with a path on $C_{i-1}$ that has one fewer square face.

In this way a simply connected induced subgraph $H$ can be ``unstacked'' until there is only one face left with all four vertices on $C_{i-1}$.  It is clear then that this gives a two-valent black vertex on $\Gamma_{i-1}$.  When all faces of $H$ have been deleted, the original black vertex $v_1$ becomes two-valent in the new $\Gamma_{i-1}$.
\end{proof}

\begin{remark}
\label{rem:globloc}
While the Leaf Reduction Proposition \ref{prop:leafreduction} works globally, one cannot employ the Simply Connected Region Reduction 
Proposition \ref{prop:unstacking} in $\Gamma_{i-1}$ while considering the larger interior graph $\Gamma_{i-2}$.
\end{remark}


Since Theorem \ref{prop:concentriccircles} gives two black two-valent vertices on each cycle $C_i$, we will investigate further these black vertices.


\begin{proposition}
\label{prop:2to2}
\textbf{Stacking Proposition.}
A black two-valent vertex on $\Gamma_{i-1}$ produces one on $\Gamma_i$.
\end{proposition}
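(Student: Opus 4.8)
The plan is to follow a single two-valent black vertex $b$ of $C_{i-1}$ through one step of the peeling process of Theorem \ref{prop:concentriccircles} and to locate the two-valent vertex it forces onto $C_i$. Since $b$ is two-valent and lies on the periphery $C_{i-1}$ of $\Gamma_{i-1}$, its two edges $bw_1$ and $bw_2$ both lie on $C_{i-1}$, and $w_1,w_2$ are precisely the two neighbors of $b$ along the cycle. Because every finite face of $\Gamma_{i-1}$ is square (as recorded in the Remark following the Periphery Proposition \ref{prop:PropertyK}), these two edges bound a unique square face $f$, whose remaining two edges $w_1b'$ and $w_2b'$ meet at a black vertex $b'\neq b$. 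The vertex $b'$ is the natural candidate to be ``stacked'' directly inside $b$.

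First I would dispose of the degenerate case in which $b'$ also lies on $C_{i-1}$: then all four vertices of $f$ lie on the periphery, which forces $\Gamma_{i-1}$ to be the single square $f$ itself, so there is no further interior graph and the statement is vacuous. Otherwise $b'$ is interior to $\Gamma_{i-1}$ and hence four-valent, again by the Remark after Proposition \ref{prop:PropertyK}. When the edges incident with $C_{i-1}$ are deleted to form the next graph, the vertices $b,w_1,w_2$ disappear and the face $f$ opens onto the new infinite region, exposing $b'$ on the new periphery. The edges $b'w_1$ and $b'w_2$ vanish together with $w_1,w_2$, so $b'$ retains exactly its two interior edges $b'w_3$ and $b'w_4$; these are now consecutive along the new boundary, making $b'$ a two-valent black vertex of $C_i$.

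The main obstacle is the possibility that one of the surviving neighbors $w_3,w_4$ of $b'$ also lies on $C_{i-1}$: then deleting $C_{i-1}$ strips a third edge from $b'$ and turns it into a leaf that is pruned during the construction, so $b'$ itself need not be the promised vertex. I would resolve this exactly as in the proof of Sublemma \ref{sublem:nofourvalentblackcycles}: pruning $b'$ exposes an adjacent square face whose opposite black vertex plays the role of $b'$ one step further inward, and the Inchworm Lemma \ref{lem:inchworm} forbids this marching from terminating against $C_{i-1}$ or against $C_i$. Since by the Finite Leaf Lemma \ref{lem:finiteleaf} only finitely many leaves can be pruned, the process must terminate at a genuine two-valent black vertex on $C_i$.

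Finally, because the Periphery Proposition Lemma \ref{lem:cyclesPropertyK} already guarantees exactly two two-valent black vertices on $C_i$, the construction yields a well-defined assignment carrying a two-valent vertex of $C_{i-1}$ to one of $C_i$ sitting directly beneath it, which is the ``stacking'' asserted in the statement. I expect the geometric exposure of $b'$ to be routine once the square face $f$ is identified, with essentially all of the work concentrated in the leaf-pruning obstacle of the third paragraph, where reusing the marching argument of Sublemma \ref{sublem:nofourvalentblackcycles} together with the Finite Leaf Lemma \ref{lem:finiteleaf} is the cleanest route.
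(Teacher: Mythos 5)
Your overall strategy (follow the square face $f$ sitting on the two-valent vertex $b$ and show its opposite black corner $b'$ becomes two-valent on $C_i$) matches the paper's, and your third paragraph correctly flags the leaf obstruction, which the paper sidesteps by invoking the Leaf Reduction Proposition~\ref{prop:leafreduction} at the outset rather than by marching. However, there is a genuine gap in your ``degenerate case.'' You claim that if $b'$ lies on $C_{i-1}$ then all four vertices of $f$ lie on the periphery and hence $\Gamma_{i-1}$ is the single square $f$. The second implication is false: a bounded face can have all four of its vertices on the periphery without its boundary \emph{being} the periphery. The correct dichotomy, as in the paper's proof, is on the \emph{edges} $w_1b'$ and $w_2b'$: since $b'$ on $C_{i-1}$ is at most three-valent there (Periphery Proposition~\ref{prop:PropertyK}), at least one of these edges must lie on $C_{i-1}$; only when \emph{both} do is $\Gamma_{i-1}$ the single square. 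When exactly one does, $b'$ is a three-valent vertex of $C_{i-1}$, the face $f$ shares three of its four edges with the periphery, and the graph continues past $b'$ --- this is the ``stacked squares'' configuration of Figure~\ref{fig:two-valent}~(B), which is the case that gives the Stacking Proposition its name.

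Your argument as written either terminates incorrectly in that configuration or never locates the promised vertex, because the two-valent vertex of $C_i$ is then not the opposite corner of the first face $f$ at all: one must iterate through the whole chain of squares stacked along $C_{i-1}$, each consecutive pair meeting in a single chord, until the two outer white vertices of the chain meet a common black neighbor $v_3$. Either $v_3$ is again on $C_{i-1}$ --- in which case it is the \emph{other} two-valent vertex of $C_{i-1}$, the graph is exactly the stack, and there is no $C_i$ --- or $v_3$ is interior and becomes the desired two-valent vertex of $C_i$. Adding this iteration (and the termination argument for it) is necessary to close the proof; the rest of your outline is sound.
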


\begin{proof}
First use the Leaf Reduction Proposition \ref{prop:leafreduction} to remove any leaves.

Now consider one of the two black two-valent vertices $v_1$ in $C_{i-1}$ and its two white neighboring vertices $u_1,u_2$ also in $C_{i-1}$.  Then the square face on the interior graph $\Gamma_{i-1}$ containing $u_1$, $v_1$, and $u_2$ has one additional black vertex $v_2$.

If $v_2$ is not in $C_{i-1}$, then it is a two-valent vertex in some interior cycle $C_i$ as in Figure \ref{fig:two-valent} (A).

\begin{figure}[h]
\begin{center}
\begin{tikzpicture}

\draw (-.5,2.5) node {$(A)$};

	\draw[ultra thick] (0,3) -- (6,3);
\draw (6.5,3) node {$C_{i-1}$};
	\draw[ultra thick] (0,2) -- (6,2);
\draw (6.5,2) node {$C_{i}$};

	\draw (1,3) -- (2,2) -- (3,3);
	
	\fill[color=white] (1,3) circle (3pt);
	\draw (1,3) circle (3pt);
\draw (1,3.25) node {$u_1$};
	\fill[color=black] (2,3) circle (3pt);
\draw (2,3.25) node {$v_1$};
	\fill[color=white] (3,3) circle (3pt);
	\draw (3,3) circle (3pt);
\draw (3,3.25) node {$u_2$};
	\fill[color=black] (2,2) circle (3pt);
\draw (2,1.75) node {$v_2$};

\draw (-.5,-.25) node {$(B)$};

	\draw[ultra thick] (0,.5) -- (6,.5);
\draw (6.5,.5) node {$C_{i-1}$};
	\draw[ultra thick] (0,-1) -- (6,-1);
\draw (6.5,-1) node {$C_{i}$};

	\draw (1,.5) .. controls (2,-.25) and (3,-.25) .. (4,.5);

	\draw (1,.5) -- (3,-1) -- (5,.5);
	
	\fill[color=white] (1,.5) circle (3pt);
	\draw (1,.5) circle (3pt);
\draw (1,.75) node {$u_1$};
	\fill[color=black] (2,.5) circle (3pt);
\draw (2,.75) node {$v_1$};
	\fill[color=white] (3,.5) circle (3pt);
	\draw (3,.5) circle (3pt);
\draw (3,.75) node {$u_2$};
	\fill[color=black] (4,.5) circle (3pt);
\draw (4,.75) node {$v_2$};
	\fill[color=white] (5,.5) circle (3pt);
	\draw (5,.5) circle (3pt);
\draw (5,.75) node {$u_3$};
	\fill[color=black] (3,-1) circle (3pt);
\draw (3,-1.25) node {$v_3$};

\end{tikzpicture}
	\caption{The two-valent vertex $v_1$ in $C_{i-1}$ producing another two-valent vertex in $C_i$.}
	\label{fig:two-valent}
\end{center}
\end{figure}
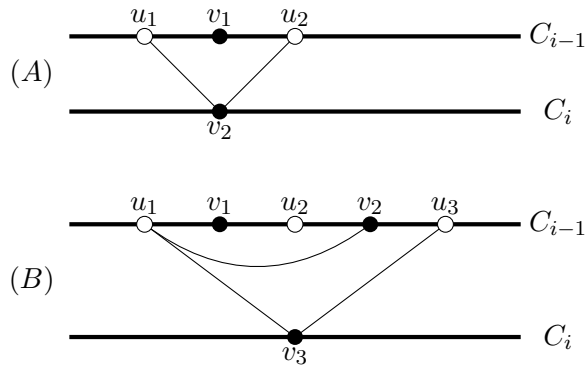

If $v_2$ is indeed in $C_{i-1}$, then at least one of the two edges to $v_2$ from the $u_1$, $u_2$ must be in $C_{i-1}$, as well, in order to keep $v_2$ at most three-valent in $\Gamma_{i-1}$.  When both of these edges belong to $C_{i-1}$, the graph $\Gamma_{i-1}=\Gamma_k$ is just a square, and there is no $C_i$.  Supposing the edge $u_1v_2$ is not in $C_{i-1}$, we obtain the scenario depicted at the top of Figure \ref{fig:two-valent} (B).

In this way more square faces may be stacked on top of each other, each pair meeting at a single edge, with all vertices appearing on $C_{i-1}$.

The outer two white vertices surrounding all the three-valent black vertices must share a common neighbor, say $v_3$.  If $v_3$ is in $C_{i-1}$, as well, it is the other two-valent vertex in $C_{i-1}$, the graph $\Gamma_{i-1}$ is just these stacked squares, and there is no $C_i$.  Otherwise $v_3$ is a two-valent vertex in $C_i$ as in the lower portion of Figure \ref{fig:two-valent} (B).

Thus one can see directly how two black two-valent vertices on interior cycles arise from those on $C_{i-1}$.
\end{proof}

\begin{remark}
It may happen, however, that the two black two-valent vertices produced in the proof above occur on two separate cycles $C_i$ and $C_{i+1}$.
\end{remark}

Following this last remark, additional single black two-valent vertices arise when more than one connected component appears after deleting the periphery.  This is handled by the following proposition.

\begin{proposition}
\label{prop:accordion}
\textbf{Accordion Proposition.}
Suppose that when $C_{i-1}$ is deleted from $\Gamma_{i-1}$ it results in two components, each containing cycles after pruning leaves and breaking cutvertices.  Let $C_i$ and $C_{i+1}$ be the two cycles, one in each component, that are closest to each other using the usual notion of distance on the graph.  Then this results in an extra pair of black two-valent vertices, one on each of $C_i$ and $C_{i+1}$.
\end{proposition}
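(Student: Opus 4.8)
The plan is to produce, directly at the neck between the two components, one black two-valent vertex on $C_i$ and one on $C_{i+1}$, and then to see by counting that these are genuinely \emph{extra}, i.e.\ distinct from the two-valent vertices supplied by the Stacking Proposition \ref{prop:2to2}. Recall that by the Periphery Proposition Lemma \ref{lem:cyclesPropertyK} each of $C_i$ and $C_{i+1}$ carries exactly two black two-valent vertices, and that by the remark following the Stacking Proposition the two black two-valent vertices of $C_{i-1}$ propagate inward, in this split scenario contributing one such vertex to each component. Thus it suffices to exhibit a \emph{further} black two-valent vertex on each of $C_i$ and $C_{i+1}$ arising from the region where the two cycles are closest.

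First I would set up the neck geometry. Since $\Gamma_{i-1}$ is $2$-connected, elementary, and has all bounded faces square with every interior black vertex four-valent (Theorem \ref{prop:concentriccircles} and the remarks following the Periphery Proposition), the portion of the disk bounded by $C_{i-1}$ that lies outside both components is tiled by square faces, each incident to $C_{i-1}$; indeed a square face meeting both components would reconnect them after deletion, and one meeting neither would belong to a single component. I would then take a shortest path in $\Gamma_{i-1}$ from $C_i$ to $C_{i+1}$: because deleting $C_{i-1}$ disconnects the two components, this path must meet $C_{i-1}$, and its endpoints realize the closest approach. Let $a\in C_i$ be the black vertex on $C_i$ nearest $C_{i+1}$ and $b\in C_{i+1}$ its counterpart (passing to the adjacent black vertex along the neck if the literal nearest vertex is white).

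Next I would run the accordion argument, which mirrors the proof of the Stacking Proposition \ref{prop:2to2}. Starting from the two white $C_i$-neighbors of $a$, I would repeatedly fill in the square faces lying in the neck: each such face either closes off against $C_{i-1}$ or forces the next black vertex of the chain, producing the characteristic folded ``accordion'' of squares bridging $C_i$ to $C_{i+1}$. Running this from $a$ outward toward $C_{i-1}$ is meant to show that exactly two of the four white neighbors of $a$ lie on $C_{i-1}$ and are deleted, so that $a$ survives on $C_i$ with precisely its two remaining consecutive neighbors, i.e.\ as a black two-valent vertex; the symmetric run produces $b$ on $C_{i+1}$. Here Sublemma \ref{sublem:nofourvalentblackcycles} rules out $a$ remaining four-valent, the Inchworm Lemma \ref{lem:inchworm} forbids the degenerate single-leaf escape, and the Finite Leaf Lemma \ref{lem:finiteleaf} guarantees the chain terminates. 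Comparing with the count above, these neck vertices are then the second two-valent vertex on each cycle and hence the asserted extra pair.

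The main obstacle will be the step showing that the closest-approach black vertex loses \emph{exactly two} (rather than one) edges to $C_{i-1}$, which is precisely what upgrades it from three-valent to two-valent; this is the genuine content of ``closest approach'' and must be extracted from minimality of the path together with the square-face bookkeeping, while simultaneously controlling the length of the accordion. A secondary subtlety is to confirm that the neck-produced vertices are distinct from the stacked ones: one must rule out (or account for) the possibility that both two-valent vertices inherited from $C_{i-1}$ land on the same component, which would shift which vertex deserves the label ``extra'' even though the final tally of two per cycle is guaranteed by Lemma \ref{lem:cyclesPropertyK}.
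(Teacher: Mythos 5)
Your proposal correctly identifies what has to be shown --- that the black vertex at the closest approach on each of $C_i$ and $C_{i+1}$ ends up two-valent there --- but the step you flag as ``the main obstacle'' is in fact the entire content of the proposition, and your plan for closing it (a local face-chase ``mirroring the Stacking Proposition'' outward from $a$) supplies no mechanism for the key word \emph{exactly}: nothing in the chase prevents $a$ from losing only one edge to $C_{i-1}$ (so remaining three-valent on $C_i$), nor does it control the global shape of the neck well enough for the face-by-face propagation to close up. The paper obtains ``exactly two'' by a global count rather than a local one. It uses $2$-connectivity of $\Gamma_{i-1}$ and Menger's Theorem \ref{thm:Menger} to extract \emph{two} internally disjoint minimal paths from $C_i$ to $C_{i+1}$ whose interior vertices lie on $C_{i-1}$, one on each side; these bound an induced subgraph $H$ with no interior vertices. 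After ruling out four-valent black vertices on the periphery of $H$ (Lemma \ref{lem:accordionnofourvalentblackcycles}), the Euler-characteristic argument from the proof of the Periphery Proposition \ref{prop:PropertyK} applies verbatim to $H$ and forces its periphery to carry exactly two black two-valent vertices; a sequence of minimality contradictions then shows that each of these is the \emph{unique} periphery-of-$H$ vertex on its inner cycle, hence four-valent in $\Gamma_{i-1}$ with its two non-$H$ edges lying on $C_i$ (respectively $C_{i+1}$), i.e.\ two-valent in $\Gamma_i$ (respectively $\Gamma_{i+1}$). This construction of $H$ and the Periphery-Proposition count applied to it are the missing ideas in your write-up.

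Two smaller points. First, your justification that every face of the neck is incident to $C_{i-1}$ (``a square face meeting both components would reconnect them after deletion'') fails as stated: the basic accordion face has its two black corners in the two different components and its two white corners on $C_{i-1}$, and deleting $C_{i-1}$ reconnects nothing because opposite corners of a square face are non-adjacent. Second, the opening bookkeeping about which two-valent vertices are ``inherited'' from $C_{i-1}$ via the Stacking Proposition \ref{prop:2to2} is not needed here --- the remark after that proposition only says the stacked vertices \emph{may} split between the two components --- and the paper defers that accounting entirely to Theorem \ref{thm:blackvertices}.
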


As in Figure \ref{fig:goodaccordion}, the two interior cycles $C_i$ and $C_{i+1}$ appear as handles of the so-called \emph{accordion}.

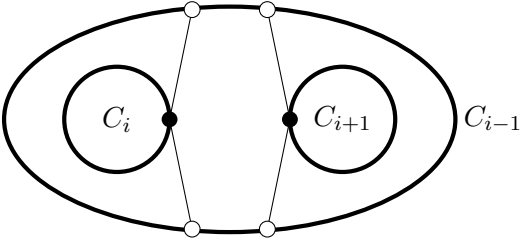
\begin{figure}[h]
\begin{center}
\begin{tikzpicture}

\draw (3.5,0) node {$C_{i-1}$};
\draw[ultra thick] (0,0) ellipse (3cm and 1.5cm);
	\draw[ultra thick] (1.5,0) circle (.7cm);
	\draw[ultra thick] (-1.5,0) circle (.7cm);
\draw (1.5,0) node {$C_{i+1}$};
\draw (-1.5,0) node {$C_{i}$};
	\draw (.8,0) -- (.5,1.46);
	\draw (.8,0) -- (.5,-1.46);
	\draw (-.8,0) -- (-.5,1.46);
	\draw (-.8,0) -- (-.5,-1.46);

	\fill[color=white] (.5,1.46) circle (3pt);
	\draw (.5,1.46) circle (3pt);
	\fill[color=white] (-.5,1.46) circle (3pt);
	\draw (-.5,1.46) circle (3pt);
	\fill[color=white] (.5,-1.46) circle (3pt);
	\draw (.5,-1.46) circle (3pt);
	\fill[color=white] (-.5,-1.46) circle (3pt);
	\draw (-.5,-1.46) circle (3pt);
	\fill[color=black] (.8,0) circle (3pt);
	\fill[color=black] (-.8,0) circle (3pt);
	
\end{tikzpicture}
	\caption{An accordion arises when $C_{i-1}$ is deleted from $\Gamma_{i-1}$ leaving two components.}
	\label{fig:goodaccordion}
\end{center}
\end{figure}


\begin{proof}
First use the Leaf Reduction Proposition \ref{prop:leafreduction} to remove the leaves. Observe that this does not change the structure of the cycles in consideration here.

Because the interior graph $\Gamma_{i-1}$ is 2-connected, Menger's Theorem \ref{thm:Menger} states that there are at least two internally disjoint paths from $C_i$ to $C_{i+1}$.  Specifically, one can choose two disjoint paths whose interior vertices are all on $C_{i-1}$, one on each side.

Choose two such paths, each with minimal distance.  Observe that this distance must be at least two; otherwise $C_i$ and $C_{i+1}$ are connected by an edge in $\Gamma_{i-1}\backslash C_{i-1}$, contradicting the assumption.

These two paths form a cycle that is the periphery of an induced subgraph $H$ with no other vertices in it, since the Leaf Reduction Proposition has removed leaves and since $C_i$ and $C_{i+1}$ are nearest to each other by distance.  The periphery of $H$ is composed of some possibly trivial path on $C_{i-1}$, an edge from $C_{i-1}$ to $C_i$, some possibly trivial path on $C_i$, another edge from $C_{i}$ to $C_{i-1}$, another possibly trivial path on $C_{i-1}$, an edge from $C_{i-1}$ to $C_{i+1}$, some possibly trivial path on $C_{i+1}$, and another edge from $C_{i+1}$ to $C_{i-1}$ as in Figure \ref{fig:badaccordion}.


\begin{figure}[h]
\begin{center}
\begin{tikzpicture}

\draw (0,0) node {$H$};
\draw (3.5,0) node {$C_{i-1}$};
\draw[ultra thick] (0,0) ellipse (3cm and 1.5cm);
	\draw[ultra thick] (1.5,0) circle (.7cm);
	\draw[ultra thick] (-1.5,0) circle (.7cm);
\draw (1.5,0) node {$C_{i+1}$};
\draw (-1.5,0) node {$C_{i}$};
	\draw (1,.5) -- (.5,1.46);
	\draw (1,-.5) -- (.5,-1.46);
	\draw (-1,.5) -- (-.5,1.46);
	\draw (-1,-.5) -- (-.5,-1.46);

	\fill[color=black] (.5,1.46) circle (3pt);
	\fill[color=black] (.5,-1.46) circle (3pt);
	\fill[color=black] (-.5,1.46) circle (3pt);
	\fill[color=black] (-.5,-1.46) circle (3pt);
	\fill[color=black] (1,.5) circle (3pt);
	\fill[color=black] (-1,.5) circle (3pt);
	\fill[color=black] (-1,-.5) circle (3pt);
	\fill[color=black] (1,-.5) circle (3pt);
	
\end{tikzpicture}
	\caption{A general set-up used in the proof of the Accordion Proposition \ref{prop:accordion}, with the vertex sets undistinguished.}
	\label{fig:badaccordion}
\end{center}
\end{figure}
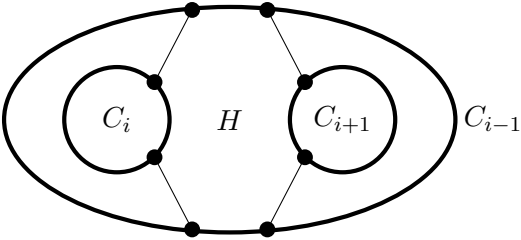

Next we show that the periphery of the induced subgraph $H$ satisfies the Periphery Proposition \ref{prop:PropertyK}.  As in the Periphery Proposition Lemma \ref{lem:cyclesPropertyK}, in order to apply the same proof of the Periphery Proposition \ref{prop:PropertyK}, we must ensure that only two-valent and three-valent black vertices can be on the periphery of $H$.  It is clear that there are no black leaves by construction.  Thus together with Lemma \ref{lem:accordionnofourvalentblackcycles} the proof can be applied in this case.

\begin{lemma}
\label{lem:accordionnofourvalentblackcycles}
There can be no four-valent black vertices on the periphery of $H$.
\end{lemma}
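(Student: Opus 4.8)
The plan is to mirror the proof of Sublemma \ref{sublem:nofourvalentblackcycles}, exploiting the fact that every vertex on the periphery of $H$ already lies on one of the three cycles $C_{i-1}$, $C_i$, or $C_{i+1}$; these cycles, together with the four connecting edges, comprise the boundary of $H$ as described in the set-up of Figure \ref{fig:badaccordion}. Suppose, for contradiction, that there is a four-valent black vertex $v_1$ on the periphery of $H$, where valence is measured in the ambient graph $\Gamma_{i-1}$. Since all faces of $\Gamma_{i-1}$ are square, the two edges of $v_1$ running along the periphery of $H$ reach white neighbors $u_1$ and $u_2$, and the corner $u_1 v_1 u_2$ must be completed to a square face by a fourth black vertex $v_2$ lying on the side of the periphery away from the interior of $H$.

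First I would dispatch the case $v_1 \in C_{i-1}$ immediately: by the Periphery Proposition Lemma \ref{lem:cyclesPropertyK}, every black vertex on $C_{i-1}$ is two- or three-valent in $\Gamma_{i-1}$, so it cannot be four-valent, a contradiction. By the symmetry between the two handles, this leaves the case $v_1 \in C_i$, the case $v_1 \in C_{i+1}$ being identical. Here $v_1$ is at most three-valent when viewed in $\Gamma_i$, yet it may carry a fourth edge in $\Gamma_{i-1}$ directed outward from $C_i$, so the contradiction is no longer immediate and the forced vertex $v_2$ must be chased exactly as in Sublemma \ref{sublem:nofourvalentblackcycles}.

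The chasing argument then runs as follows. The vertex $v_2$ lies in $\Gamma_{i-1}$ but outside $H$. It cannot lie on $C_{i-1}$, for there it would be four-valent, violating Lemma \ref{lem:cyclesPropertyK}; and it cannot lie on $C_i$ or $C_{i+1}$ without forcing one of those cycles to extend through the new square face and thereby altering the cycle structure fixed by the construction. Thus $v_2$ lies either on a leaf or on a further interior cycle. In the leaf case, each new leaf spawns another black vertex to analyze, but by the Finite Leaf Lemma \ref{lem:finiteleaf} this subprocess terminates, returning us to the interior-cycle case. On an interior cycle, $v_2$ is itself four-valent with exactly two cycle neighbors, and completing its adjacent corners to square faces forces still more black vertices, none of which may land on a previously fixed cycle. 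Iterating exhausts all leaves and interior cycles, leaving no legal home for the last forced vertex, which is the contradiction.

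The main obstacle is precisely this bookkeeping: in the accordion configuration there are three nearby cycles $C_{i-1}$, $C_i$, $C_{i+1}$ rather than the two of Sublemma \ref{sublem:nofourvalentblackcycles}, so one must check carefully that a forced black vertex can never hide on any already-constructed cycle and that the recursion, guarded by the Finite Leaf Lemma, genuinely terminates. Once this is settled, the periphery of $H$ carries only two- and three-valent black vertices, which is exactly the input needed to run the Euler-characteristic count of the Periphery Proposition \ref{prop:PropertyK} on $H$ in the surrounding proof of the Accordion Proposition \ref{prop:accordion}.
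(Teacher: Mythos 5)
There is a genuine gap, and it begins with your decision to measure valence in the ambient graph $\Gamma_{i-1}$. Under that reading the lemma is false: the proof of the Accordion Proposition \ref{prop:accordion} goes on to show that the handle vertex of the periphery of $H$ lying on $C_i$ (and likewise the one on $C_{i+1}$) is two-valent in $H$ but \emph{four-valent in $\Gamma_{i-1}$}, its two extra edges being edges of $C_i$. That vertex sits on the periphery of $H$, so it is a counterexample to the statement you are trying to prove; no chase can reach a contradiction because such a vertex genuinely exists. The lemma must be read with valence taken in $H$ itself: its entire purpose is to let the Euler-characteristic count from the proof of the Periphery Proposition \ref{prop:PropertyK} be run on the graph $H$, and that count uses degrees of black vertices \emph{in $H$}. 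The misreading also breaks your first step: the corner $u_1 v_1 u_2$ on the side of the periphery away from the interior of $H$ is a face corner, hence completed by a single square, only when $v_1$ has no further edges in that outer wedge --- that is, only when all four edges of $v_1$ lie in $H$, which is exactly the case you did not take. For the handle vertex the outer wedge contains its two $C_i$-edges, so no square face is forced there and the chase never gets started.

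Under the correct reading, the chasing machinery of Sublemma \ref{sublem:nofourvalentblackcycles} is not what is needed anyway; the paper's proof is two lines and rests on a structural fact you never invoke, namely that $H$ has no vertices other than those on its periphery. A black vertex of the periphery of $H$ that is four-valent in $H$ needs four neighbors inside $H$. If it lies on $C_{i-1}$ it is four-valent on the periphery $C_{i-1}$ of $\Gamma_{i-1}$, contradicting Theorem \ref{prop:concentriccircles}. If it lies on $C_i$ or $C_{i+1}$, its two neighbors beyond the two periphery edges must be chords to other periphery vertices of $H$ (there being no interior vertices), and such chords contradict the minimal-distance choice of the two connecting paths in the Accordion Proposition \ref{prop:accordion}. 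You should replace the chase with this minimality argument.
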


\begin{proof}
Such a vertex could not be on $C_{i-1}$ as it would be four-valent there, contradicting Theorem \ref{prop:concentriccircles}, so it would have to be on $C_i$ or $C_{i+1}$.  However, since there are no vertices in $H$ except on its periphery, this would contradict the minimal distance path assumption.
\end{proof}

Thus there are two black two-valent vertices.  We will show that one of these sits on $C_i$ and the other on $C_{i+1}$ in such a way that they are also two-valent there.

Suppose by way of contradiction one of these, say $v_1$, appears on $C_{i-1}$; let $u_1$ and $u_2$ be its two white neighbors there.  Then there is a square face in $H$ containing $u_1$, $v_1$, $u_2$, and some black vertex $v_2$, which must lie on the periphery of $H$ because there are no interior vertices.  However, this vertex cannot appear on $C_{i-1}$ as it would either be four-valent there or it would contradict the minimality assumption, and it cannot appear on $C_i$ or $C_{i+1}$ as it would contradict the minimality assumption there, as well.

Then the two-valent black vertex $v_1$ on the periphery of $H$ must appear on paths on either $C_i$ or $C_{i+1}$, say $C_i$.  However, it cannot appear as an internal vertex on either of these paths, as there would be a square face in $H$ containing it together with its two white neighbors as well as some other black vertex $v_2$.  This vertex cannot lie on $C_{i-1}$ or $C_i$ as it would contradict the minimality assumption, and it cannot lie on $C_{i+1}$ as it would contradict the assumption that $C_i$ and $C_{i+1}$ are not connected by an edge.

Not only must $v_1$ appear as an endpoint of the path on $C_i$, but it must appear as the trivial path itself on $C_i$; otherwise there is a square face in $H$ containing it together with its white neighbors as well as some other black vertex $v_2$.  This vertex cannot lie on $C_{i-1}$ as it would either contradict the minimality assumption or be four-valent on $C_{i-1}$, it cannot lie on $C_i$ as it would contradict the minimality assumption, and it cannot lie on $C_{i+1}$ as it would contradict the assumption that $C_i$ and $C_{i+1}$ are not connected by an edge.

Thus the path $C_i$ is in fact trivial, and $v_1$ is the only vertex from the periphery of $H$ that is on it.  Since it is two-valent in $H$ and in the interior of the interior graph $\Gamma_{i-1}$, it must be four-valent in $\Gamma_{i-1}$ with its other two edges as edges of $C_i$.  Thus it is two-valent in $\Gamma_i$.

The second two-valent black vertex on the periphery of $H$ can then only appear as a two-valent black vertex on $C_{i+1}$ following the procedure above, completing the proof.
\end{proof}

\begin{remark}
Consider the number of faces in $H$.  When $H$ is just a single face, the two black vertices on $C_i$ and $C_{i+1}$ can be thought of as handles.  When $H$ contains more faces, these handles can be thought of as ``stretched'' like an accordion to enlarge the paths on $C_{i-1}$.

Similarly, such a region $H$ can be ``compressed'' like an accordion into a single face by a local move imitating the Simply Connected Region Reduction Proposition \ref{prop:unstacking} without altering the structure of the cycles within $\Gamma_{i-1}$.   This cannot be employed globally as it may effect $\Gamma_{i-2}$.
\end{remark}

\begin{remark}
If $\Gamma_{i-1}\backslash C_{i-1}$ has more than two connected components, they must be arranged in a tree-like fashion and so the Accordion Proposition \ref{prop:accordion} can be applied to the pair of components along each edge of this tree. 
\end{remark}

There is one last way for several interior cycles to appear, and that is from breaking cutvertices.  We show how new two-valent black vertices arise in this context.


\begin{proposition}
\label{prop:2b2break}
\textbf{Party Hat Proposition.}
Suppose that when a white cutvertex $u_1$ is broken in the interior graph $\Gamma_{i-1}$ it results in two components, each containing cycles after pruning leaves and breaking additional cutvertices.  Let $C_i$ and $C_{i+1}$ be the two cycles, one in each component with $C_i$ containing the original cutvertex, that are closest to each other using the usual notion of distance on the graph.  Then this results in an extra pair of black two-valent vertices, both on $C_{i+1}$.
\end{proposition}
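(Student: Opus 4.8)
The plan is to mirror the proof of the Accordion Proposition \ref{prop:accordion}, with the essential difference that the two cycles are now joined at the single white apex $u_1$ rather than across a two-sided region, and this is exactly what forces both new two-valent vertices onto $C_{i+1}$. As a first step I would invoke the Leaf Reduction Proposition \ref{prop:leafreduction} to strip all leaves, which by construction does not disturb the cycles $C_i$ and $C_{i+1}$ under consideration. After this reduction the component of $\Gamma_{i-1}\ba C_{i-1}$ containing $C_{i+1}$ is, together with $u_1$, a $2$-connected graph: it is $u_1$ attached by at least two edges to the interior graph $\Gamma_{i+1}$, which is $2$-connected by Theorem \ref{prop:concentriccircles}. (If $u_1$ met this component in only one edge, that edge would be a bridge and the situation would reduce to pruning a leaf, so we may assume at least two such edges.)

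With $2$-connectivity in hand I would apply Menger's Theorem \ref{thm:Menger} (in its fan form, taking the neighbors of $u_1$ in this component against the cycle $C_{i+1}$) to produce two internally disjoint paths from $u_1$ to $C_{i+1}$ that meet only at $u_1$; choosing each of minimal length, these two \emph{sides} together with the arc of $C_{i+1}$ between their feet bound an induced subgraph $H$, the \emph{party hat}, whose apex is $u_1$. As in the Accordion Proposition, minimality of the paths together with the prior removal of leaves guarantees that $H$ has no vertices off its own periphery. I would then run the argument of the Periphery Proposition Lemma \ref{lem:cyclesPropertyK} on the boundary cycle of $H$: there are no black leaves by construction, and there can be no four-valent black vertex on this boundary (such a vertex could lie neither on $C_{i-1}$, where it would already be four-valent, nor interior to $H$, which is empty), so the boundary of $H$ satisfies the Periphery Proposition \ref{prop:PropertyK} and carries exactly two two-valent black vertices.

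The decisive point---and the place where this proposition diverges from the Accordion Proposition---is the location of these two vertices. Here the entire contribution of the $C_i$ side to the boundary of $H$ is the single apex $u_1$, which is \emph{white}; every black boundary vertex of $H$ therefore lies on one of the two sides or on the base arc, all of which sit in the $C_{i+1}$ component. Consequently both two-valent black vertices must lie on $C_{i+1}$, whereas in the accordion the two-sided region split them one onto each cycle. To finish, I would argue exactly as in the closing paragraphs of Proposition \ref{prop:accordion} that neither side can carry a two-valent vertex in its interior---a square face there would force a further black vertex violating minimality or the hypothesis that $C_i$ and $C_{i+1}$ share no edge---so each two-valent vertex sits on the base arc, i.e.\ on $C_{i+1}$; tracking valences through the breaking of $u_1$ (Definition \ref{def:breaking}) then shows each is genuinely two-valent on $C_{i+1}$.

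The main obstacle I anticipate is the bookkeeping in this last step: verifying rigorously that the party hat $H$ has empty interior and that each side is forced down to a single edge, so that breaking $u_1$ produces precisely two---rather than one or more than two---new two-valent vertices. This is where the hypotheses that leaves have already been pruned, that $C_i$ and $C_{i+1}$ are the closest cycles, and that they are not joined by an edge must be used in concert, together with the Finite Leaf Lemma \ref{lem:finiteleaf} to terminate any cascade of auxiliary black vertices, exactly as in the corresponding step of the Accordion Proposition.
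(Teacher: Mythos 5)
Your strategy---Menger's Theorem to cut out a party-hat region $H$ and then the Periphery Proposition applied to its boundary---is modelled closely on the Accordion Proposition \ref{prop:accordion}, but the paper's own proof of Proposition \ref{prop:2b2break} does not go this way. It first proves, as a separate lemma with a full cascade argument in the style of Sublemma \ref{sublem:nofourvalentblackcycles}, that $u_1$ has at least two neighbours \emph{on the cycle $C_{i+1}$ itself}; it then takes the two outermost such neighbours $v_1,v_2$ in the cyclic order around $u_1$ and shows by a second cascade argument that each must be two-valent on $C_{i+1}$: if $v_1$ were three-valent there, the square face of $\Gamma_{i-1}$ spanned by $u_1$, $v_1$ and the outward white neighbour of $v_1$ on $C_{i+1}$ would force a black vertex $v_3$ that can lie on no existing cycle, no new cycle, and no leaf.

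Measured against that, your proposal has genuine gaps. First and most seriously, you identify ``two-valent on the boundary of $H$'' with ``two-valent on $C_{i+1}$.'' These are statements about different edge sets: a black vertex on the base arc contributes only its edges inside $H$ to the first count, but only its edges inside $\Gamma_{i+1}$ to the second, and a foot of the hat that is three-valent on $C_{i+1}$ (with its fourth edge leaving $H$ on the far side of the side edge $u_1v_1$) is still two-valent in $H$. Since this is precisely what the proposition asserts, it cannot be absorbed into ``tracking valences through the breaking of $u_1$''; it is where the paper's square-face contradiction is indispensable. Second, your dismissal of the case where $u_1$ meets the $C_{i+1}$-component in a single edge is incorrect: such an edge is a bridge to a component containing a cycle, not a leaf, so the Leaf Reduction Proposition \ref{prop:leafreduction} does not remove it; ruling this out is the content of the unnumbered lemma inside the paper's proof and again needs the cascade argument. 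Finally, in the party-hat configuration $C_i$ and $C_{i+1}$ \emph{are} joined by edges (through $u_1\in C_i$), so the step you import from the closing paragraphs of Proposition \ref{prop:accordion} that appeals to ``the hypothesis that $C_i$ and $C_{i+1}$ share no edge'' is not available here, and that argument does not transplant verbatim.
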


As in Figure \ref{fig:goodpartyhat}, the interior cycle $C_{i}$ appears as the puff ball at the top of the so-called \emph{party hat} worn by the interior cycle $C_{i+1}$.

\begin{figure}[h]
\begin{center}
\begin{tikzpicture}

\draw (3.5,0) node {$C_{i-1}$};
\draw[ultra thick] (0,0) ellipse (3cm and 1.5cm);
	\draw[ultra thick] (1.5,0) circle (.7cm);
	\draw[ultra thick] (-1.5,0) circle (.7cm);
\draw (1.5,0) node {$C_{i+1}$};
\draw (-1.5,0) node {$C_{i}$};
	\draw (.5,1.46) -- (1,.5) -- (-.8,0) -- (1,-.5) -- (.5,-1.46);

	\fill[color=white] (.5,1.46) circle (3pt);
	\draw (.5,1.46) circle (3pt);
	\fill[color=white] (.5,-1.46) circle (3pt);
	\draw (.5,-1.46) circle (3pt);
	\fill[color=white] (-.8,0) circle (3pt);
	\draw (-.8,0) circle (3pt);
\draw (-.5,-.5) node {$u_1$};
	\fill[color=black] (1,.5) circle (3pt);
	\fill[color=black] (1,-.5) circle (3pt);
	
\end{tikzpicture}
	\caption{A party hat arises if a cutvertex is broken when $C_{i-1}$ is deleted from $\Gamma_{i-1}$.}
	\label{fig:goodpartyhat}
\end{center}
\end{figure}
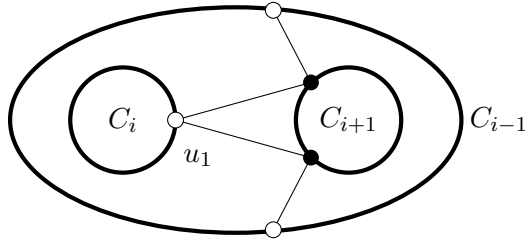

\begin{proof}
First use the Leaf Reduction Proposition \ref{prop:leafreduction} to remove the leaves.  Observe that this does not change the structure of the cycles in consideration here.

Recall from Lemma \ref{lem:odd} that after deleting the cutvertex $u_1$ exactly one of the components has an odd number of vertices; say this is cycle $C_i$.  Look at the black neighbors of $u_1$ that are on $C_{i+1}$ and consider their cyclic order around $u_1$.  

\begin{lemma}
The cutvertex $u_1$ has valence at least two in the induced subgraph together with the cycle $C_{i+1}$.
\end{lemma}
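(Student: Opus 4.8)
The plan is to argue by contradiction, reducing the statement to the claim that a single edge joining $u_1$ to the $C_{i+1}$-component would have to be a bridge, and then ruling that out using the all-square face structure of $\Gamma_{i-1}$ together with the Periphery Proposition \ref{prop:PropertyK}. First I would note that, leaves having already been pruned (Definition \ref{def:pruning}), the vertex $u_1$ is not a leaf; and since $u_1$ is a cutvertex of the connected current graph, each component of its deletion — in particular the one carrying $C_{i+1}$ — is joined to $u_1$ by at least one edge. So it suffices to exclude the possibility that exactly one edge $u_1b$, with $b$ a black vertex, runs from $u_1$ into that component; in that situation $u_1b$ is a bridge whose removal detaches the $C_{i+1}$-component.

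Next I would extract the relevant local structure from the fact that every bounded face of $\Gamma_{i-1}$ is a square. As $u_1$ and $b$ both survive the deletion of the periphery $C_{i-1}$, the edge $u_1b$ is interior to $\Gamma_{i-1}$ and hence borders two square faces, which I write cyclically as $F_1=u_1bw_1c_1$ and $F_2=u_1bw_2c_2$; here $w_1,w_2$ are white neighbours of $b$, the $c_1,c_2$ are black neighbours of $u_1$, and $c_j$ is adjacent to $w_j$. Because $u_1b$ is a bridge, both faces must have been opened to the infinite face by the deletion of $C_{i-1}$, so each $F_j$ loses at least one of its corners $w_j,c_j$ to the periphery. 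The decisive step is then a two-way case analysis on which corner of each square is deleted.

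In the first case some face, say $F_1$, has its black corner $c_1$ on $C_{i-1}$ while $w_1$ survives. Then $c_1$ carries its two neighbours along the cycle $C_{i-1}$ (both white) together with the two further neighbours $u_1$ and $w_1$ supplied by the square $F_1$; since $u_1$ and $w_1$ are interior to $\Gamma_{i-1}$ they are distinct from the two periphery neighbours and from one another, so $c_1$ has four distinct neighbours. Thus $c_1$ is a four-valent black vertex lying on $C_{i-1}$, contradicting the Periphery Proposition \ref{prop:PropertyK} (equivalently Sublemma \ref{sublem:nofourvalentblackcycles}). In the remaining case no such face occurs, which forces both white corners $w_1,w_2$ onto $C_{i-1}$; deleting them strips the edges $bw_1,bw_2$ from $b$, leaving $b$ joined to the $C_{i+1}$-component by at most one surviving edge. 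Then $b$ is (or, after the bridge $u_1b$ is broken, becomes) a leaf and would be removed by pruning, so $b$ cannot lie on $C_{i+1}$ — again a contradiction. Having excluded the single-edge case, I conclude that $u_1$ meets the $C_{i+1}$-component in at least two edges, which is the asserted valence at least two.

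I expect the genuine difficulty to lie in the bookkeeping of the last two paragraphs rather than in any single idea: one must verify that the two square faces $F_1,F_2$ at the bridge are distinct and nondegenerate, that the four neighbours counted at $c_1$ are really distinct (which is exactly where the hypothesis $w_1\notin C_{i-1}$ enters), and that in the degenerate subcase $b$ is genuinely reduced to a leaf rather than retaining a second interior neighbour on the $C_{i+1}$-side. Care is also needed not to misapply the absence of four-valent black vertices at an \emph{interior} vertex of $\Gamma_{i-1}$, where that property simply fails; the contradiction must always be pinned either to a black vertex sitting on the periphery $C_{i-1}$ or to a prunable leaf.
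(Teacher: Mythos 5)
Your argument proves the wrong statement. The lemma asserts that $u_1$ has at least two neighbours \emph{on the cycle $C_{i+1}$} (its valence in the induced subgraph on $V(C_{i+1})\cup\{u_1\}$), whereas your contradiction hypothesis is that exactly one edge runs from $u_1$ into the whole \emph{component} containing $C_{i+1}$. These are not negations of one another: that component may carry several cycles after pruning and breaking, and $u_1$ could have two or more edges into it with at most one landing on $C_{i+1}$ itself, the others landing on some further cycle $C_{i+2}$. That configuration is untouched by your dichotomy, and it is precisely the configuration the paper's proof works hardest to exclude: there one takes the single neighbour $v_1$ of $u_1$ on $C_{i+1}$, uses that $v_1$ is four-valent to produce a square face on $u_1$, $v_1$, $u_2$ outside $\Gamma_{i+1}$ with a fourth black vertex $v_2$, rules out $v_2$ lying on $C_{i-1}$ (it would be four-valent there), on $C_i$ (it would enlarge the cycle), on $C_{i+1}$ (it would contradict the valence-one assumption), or on a leaf, and is then forced to place $v_2$ on a new cycle $C_{i+2}$ --- after which the cascading-exhaustion argument of Sublemma \ref{sublem:nofourvalentblackcycles} shows no consistent placement exists. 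Your proof has no counterpart to this cascade, and that is where the real work lies.

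The gap surfaces concretely in your second case: having forced $w_1,w_2$ onto $C_{i-1}$ you conclude that $b$ ``cannot lie on $C_{i+1}$'' and call this a contradiction, but your hypothesis never required $b\in C_{i+1}$; you have merely shown that the unique edge out of $u_1$ lands off the cycle, which returns you to the unhandled configuration above. Your first case --- a black corner of $F_1$ on $C_{i-1}$ would be four-valent there, violating the Periphery Proposition --- is a sound local observation and matches one branch of the paper's exclusion of $v_2$, but it cannot carry the whole proof. A smaller point: a face cycle at $u_1b$ can also be destroyed by the breaking of an earlier cutvertex, not only by losing a corner to $C_{i-1}$, so even your dichotomy is not exhaustive as stated.
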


\begin{proof}
Suppose by way of contradiction that $u_1$ has valence one in $C_{i+1}\cup\{u_1\}$; let $v_1$ be its neighbor on $C_{i+1}$.  Since $v_1$ is four-valent in $\Gamma_{i-1}$, it has one extra neighbor besides $u_1$ and the two on $C_{i+1}$.  If this is outside of $\Gamma_{i+1}$ consider the neighbor $u_2$ of $v_1$ on the opposite side of $u_1$; otherwise choose either neighbor.

Then the square face outside the interior graph $\Gamma_{i+1}$ containing $u_1$, $v_1$, and $u_2$ has one additional black vertex $v_2$.  This black vertex cannot be on $C_{i-1}$ as it would be four-valent there; it cannot be on $C_i$ as it would give way to a larger cycle containing both $C_i$ and $C_{i+1}$.  If it is on $C_{i+1}$, this contradicts the assumption that $u_1$ has valence one there.


There are no leaves to consider by the Leaf Reduction Proposition \ref{prop:leafreduction}, so $v_2$ must be on some other cycle $C_{i+2}$, where it is two-valent.  

Then following the proof of Sublemma \ref{sublem:nofourvalentblackcycles}, this situation again forces two new black vertices that must be handled according to the above arguments.  What is more is that no new black vertex created can appear on a previous interior cycle, as it would alter the cycle structure.  Thus ultimately all new interior cycles and leaves are exhausted, and there are no remaining options for the black vertex, a contradiction.
\end{proof}

By the lemma above, there are at least two black neighbors of $u_1$ on $C_{i+1}$, and so we may consider just the two on the outside (on either side) of this cyclic order; call these $v_1$ and $v_2$.

We show that $v_1$ and $v_2$ are two-valent in $C_{i+1}$ and thus there are no other two-valent black vertices on $C_{i+1}$ by the Periphery Proposition Lemma \ref{lem:cyclesPropertyK}.

If not $v_1$ must be three-valent following the lemma.  Then there is a square face in $\Gamma_{i-1}$ outside of $\Gamma_i$ and $\Gamma_{i+1}$ containing $u_1$, $v_1$, and a white neighbor of $v_1$ on $C_{i+1}$ that also contains some black vertex $v_3$.

This black vertex $v_3$ cannot be on $C_{i-1}$ as it would be four-valent there; it cannot be on $C_i$ or $C_{i+1}$ as this would change the cycle structure.  Since there are no leaves, it must be on some new $C_{i+2}$.

Then following the proof of Sublemma \ref{sublem:nofourvalentblackcycles}, this situation again forces two new black vertices that must be handled according to the above arguments.  What is more is that no new black vertex created can appear on a previous interior cycle, as it would alter the cycle structure.  Thus ultimately all new interior cycles and leaves are exhausted and there are no remaining options for the black vertex, a contradiction.
\end{proof}

\begin{remark}
Note that the two new black two-valent vertices appear on the same component $C_{i+1}$.  Since by the Periphery Proposition Lemma \ref{lem:cyclesPropertyK} there can only be two such vertices, no others are present.
\end{remark}

The above results can be summarized by the following theorem.

\begin{theorem}
\label{thm:blackvertices}
Suppose there is a black two-valent vertex on the interior cycle $C_i$ in the interior graph $\Gamma_{i-1}$.  Then it resulted from excatly one of the following:
\begin{itemize}
	\item a black two-valent vertex on $C_{i-1}$,
	\item an ``accordion'' together with a black two-valent vertex on some other $C_{i+1}$, or
	\item a ``party hat'' together with the other black two-valent vertex on $C_i$.
\end{itemize}
\end{theorem}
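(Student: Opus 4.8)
The plan is to treat this theorem as the bookkeeping summary of the three preceding results — the Stacking Proposition \ref{prop:2to2}, the Accordion Proposition \ref{prop:accordion}, and the Party Hat Proposition \ref{prop:2b2break} — and to prove it by tracing the origin of a single black two-valent vertex $v$ on $C_i$ through the construction of Theorem \ref{prop:concentriccircles}. I would begin, as each of those propositions does, by applying the Leaf Reduction Proposition \ref{prop:leafreduction} to strip away every leaf, observing that this alters neither the cycle structure nor the two-valent black vertices carried by the cycles. By the Periphery Proposition Lemma \ref{lem:cyclesPropertyK} the cycle $C_i$ has exactly two black two-valent vertices, so the statement is genuinely a claim about where one such vertex comes from.

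I would then recall precisely how $C_i$ is produced: one deletes from $\Gamma_{i-1}$ all edges incident with the periphery $C_{i-1}$, obtaining $\Gamma_i'$ with new peripheral edges, and then prunes leaves and breaks cutvertices. Because the interior graphs are themselves balanced overlaid Tait graphs, every black vertex of $\Gamma_{i-1}$ not on $C_{i-1}$ is four-valent; hence $v$ was four-valent in $\Gamma_{i-1}$ and lost exactly two edges in becoming two-valent on $C_i$. Since the leaves have already been removed globally, those two lost edges were removed either by the deletion of the $C_{i-1}$-incident edges or by the breaking of a cutvertex, and this dichotomy, refined by the number of components that appear, yields the required trichotomy. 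If $v$'s valence drop involves neither a component splitting nor a cutvertex-breaking, then the deleted square face at $v$ is exactly the one exhibited in the Stacking Proposition \ref{prop:2to2}, so $v$ arises from a two-valent vertex on $C_{i-1}$ (first case). If deleting $C_{i-1}$ disconnects the graph and $v$ lies on one of the two nearest cycles flanking the cut, then the Accordion Proposition \ref{prop:accordion} applies and pairs $v$ with a two-valent vertex on an adjacent cycle $C_{i+1}$ (second case). If instead $v$'s valence drop is forced by the breaking of a white cutvertex, then the Party Hat Proposition \ref{prop:2b2break} applies and the partner two-valent vertex lies on the same cycle as $v$ (third case).

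It remains to argue that these three possibilities are both exhaustive and mutually exclusive. Exhaustiveness follows because the construction of Theorem \ref{prop:concentriccircles} produces interior cycles in no way other than through these three branches — a single emergent cycle, a splitting into components, or a cutvertex to be broken — and the three propositions are precisely the analyses of those branches. Exclusivity I would obtain from the determinism of the construction together with the counting furnished by the Periphery Proposition Lemma \ref{lem:cyclesPropertyK}: each cycle carries exactly two two-valent vertices, each such vertex loses its pair of edges through a single one of the operations above, and so its cause is uniquely determined. The main obstacle, I expect, is precisely this last accounting — ruling out that one two-valent vertex could be manufactured by two mechanisms at once, or that a fourth, unlisted mechanism could slip in when several accordions and party hats interact within a single $\Gamma_{i-1}$. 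This is controlled by noting that the pruning-and-breaking process treats each cutvertex and each component boundary exactly once, so the two edges lost at $v$ are attributable to one and only one of the three moves, giving ``exactly one'' as required.
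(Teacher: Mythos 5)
Your proposal is correct and follows essentially the same route as the paper: apply the Leaf Reduction Proposition first, let the Stacking, Accordion, and Party Hat Propositions account for the two-valent black vertices arising from each branch of the construction, and invoke the Periphery Proposition Lemma's count of exactly two such vertices per cycle to rule out any further mechanism. The paper's own proof is simply a terser global tally of the same three cases, whereas you trace a single vertex's valence drop; the content is the same.
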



\begin{proof}
First use the Leaf Reduction Proposition \ref{prop:leafreduction} to remove the leaves.  Observe that this does not change the structure of the cycles in consideration here and in particular does not affect two-valent black vertices.

%
%

The Periphery Proposition \ref{prop:PropertyK} accounts for the two black two-valent vertices on $C_1$.  The Stacking Proposition \ref{prop:2to2} accounts for the two black two-valent vertices on each subsequent $C_i$ unless there are several interior cycles within some $C_{i-1}$.  These arise due to either disconnected components in $\Gamma_{i-1}\backslash C_{i-1}$ or breaking cutvertices.  The Accordion Proposition \ref{prop:accordion} accounts for two additional black two-valent vertices for each additional component, and the Party Hat Proposition \ref{prop:2b2break} accounts for two additional black two-valent vertices for each additional cycle after breaking cutvertices.

By the Periphery Proposition Lemma \ref{lem:cyclesPropertyK} there can be no more such black two-valent vertices.
\end{proof}

Now that we have the appropriate notions of accordions and party hats, the following reduction propositions can be introduced.

\begin{proposition}
\label{prop:accordionreduction}
\textbf{Accordion Reduction Proposition.}
Suppose that an accordion arises when $C_{i-1}$ is deleted from the interior graph $\Gamma_{i-1}$ as in the Accordion Proposition \ref{prop:accordion}.  Furthermore, suppose that the interior cycle $C_{i+1}$ is connected to $C_i$ by the accordion but is not connected to any other cycles.  Then the interior graph $\Gamma_{i-1}$ can be reduced to one without $C_{i+1}$ by a local move as depicted in Figure \ref{fig:accordionreduction}.
\end{proposition}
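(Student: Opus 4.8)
The plan is to follow the template of the earlier reduction results, Propositions \ref{prop:leafreduction} and \ref{prop:unstacking}: first normalize the local picture, then exhibit the move of Figure \ref{fig:accordionreduction} and check that it returns a balanced overlaid Tait graph with the remaining cycles intact. First I would apply the Leaf Reduction Proposition \ref{prop:leafreduction} to delete every leaf; as in the proofs of the Accordion and Party Hat Propositions \ref{prop:accordion} and \ref{prop:2b2break}, this does not disturb the cycles under consideration. Next I would invoke the compression described in the remark following the Accordion Proposition \ref{prop:accordion} to squeeze the accordion region $H$ down to a single square face, which preserves the structure of all the cycles within $\Gamma_{i-1}$. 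After these two steps $C_{i+1}$ is attached to $C_i$ by a single \emph{handle}: one square face meeting $C_i$ at the black two-valent vertex $a$ and meeting $C_{i+1}$ at the black two-valent vertex $b$ promised by the Accordion Proposition \ref{prop:accordion}, with $a$ four-valent in $\Gamma_{i-1}$ (two of its edges on $C_i$, two into the handle) and $b$ four-valent there as well.

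With the picture normalized, the key step uses the hypothesis that $C_{i+1}$ is joined to $C_i$ by the accordion but to no other cycle. This makes the blob consisting of the interior graph $\Gamma_{i+1}$ together with the handle a self-contained induced subgraph whose only attachment to the rest of $\Gamma_{i-1}$ is through the vertex $a$ on $C_i$. I would therefore delete this blob by the local move of Figure \ref{fig:accordionreduction}, which excises the handle and everything enclosed by $C_{i+1}$. The vertex $a$ then loses its two handle edges and retains only its two edges on $C_i$, so it becomes a black two-valent vertex of $C_i$ in the reduced graph, in accordance with the Periphery Proposition Lemma \ref{lem:cyclesPropertyK}.

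It then remains to verify that the result is again a balanced overlaid Tait graph whose other cycles are untouched. Bipartiteness is immediate, every bounded face produced by the move is square by construction, and no black vertex gains valence, so all stay at valence at most four. Since every deleted vertex lies strictly inside $C_{i+1}$ or on the handle, no vertex of any cycle $C_j$ with $j\neq i+1$ is affected, and the Periphery Proposition Lemma \ref{lem:cyclesPropertyK} continues to hold for each remaining cycle. For the balanced condition I would use the remark after Theorem \ref{prop:concentriccircles}, by which $\Gamma_{i+1}$ is itself a balanced overlaid Tait graph and so contributes equally many black and white vertices, and then account separately for the two white vertices and the black vertex $a$ of the collapsed handle.

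The main obstacle will be precisely this last accounting: one must pin down which of the two white vertices of the collapsed handle are shared with $C_{i+1}$ (hence already counted inside $\Gamma_{i+1}$) and which are strictly interior to the handle, so that the colour classes of the excised blob come out equal and the reduced graph remains balanced. This is the same delicate parity phenomenon recorded in Lemma \ref{lem:odd} and exploited throughout the proof of the Accordion Proposition \ref{prop:accordion}, and I would resolve it by tracking the four vertices $a$, $b$, and the two white vertices of the handle explicitly, exactly as in that proof, rather than by an abstract Euler-characteristic count.
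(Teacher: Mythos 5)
Your proposal diverges from the paper's argument at the decisive point, and the divergence opens a real gap. The Periphery Proposition Lemma \ref{lem:cyclesPropertyK} forces $C_{i+1}$ to carry \emph{two} black two-valent vertices, and the accordion accounts for only one of them (your $b$). The paper's proof turns on locating the second one, $v_2$: by Theorem \ref{thm:blackvertices}, since $C_{i+1}$ is connected to no other cycle, $v_2$ can only arise by the Stacking Proposition \ref{prop:2to2} from a black two-valent vertex $v_3$ on $C_{i-1}$, so there is a stack of square faces joining $C_{i-1}$ to $C_{i+1}$ at $v_3$/$v_2$ in addition to the accordion. Your claim that the blob $\Gamma_{i+1}\cup(\text{handle})$ ``is attached to the rest of $\Gamma_{i-1}$ only through the vertex $a$ on $C_i$'' is therefore false twice over: the handle's two white vertices lie on $C_{i-1}$, and the stack at $v_2$ provides a further attachment; indeed, a single attachment point would make $a$ a cutvertex, contradicting the 2-connectivity of $\Gamma_{i-1}$ (Theorem \ref{prop:concentriccircles}). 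Because you never see the stack, your excision leaves it dangling against the hole where $\Gamma_{i+1}$ used to be, producing a non-square bounded face, and it leaves $v_3$ as a spurious extra black two-valent vertex on the periphery.

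The move itself is also oriented the wrong way around. The paper \emph{keeps} the accordion face and deletes everything on and inside $C_{i+1}$ (including the stack and the arc of $C_{i-1}$ through $v_3$), so that $v_1=b$ becomes a two-valent black vertex on the new periphery, \emph{replacing} $v_3$; this is exactly the bookkeeping that preserves the ``two black two-valent vertices'' invariant that Theorem \ref{conj:sum} relies on. You instead delete the handle and promote $a$ on $C_i$, without checking that invariant. To repair your argument you would need to: (i) invoke Theorem \ref{thm:blackvertices} to pin down $v_2$, $v_3$ and the stack; (ii) include the stack and $v_3$'s arc of $C_{i-1}$ in the excised region; and (iii) verify the exchange $v_3\mapsto v_1$ on the periphery. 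Your closing worry about the colour-class count is not where the difficulty lies -- the deleted vertex set is essentially $V(\Gamma_{i+1})$, which is balanced by the remark after Theorem \ref{prop:concentriccircles} -- so that paragraph addresses a non-issue while the structural one above remains open.
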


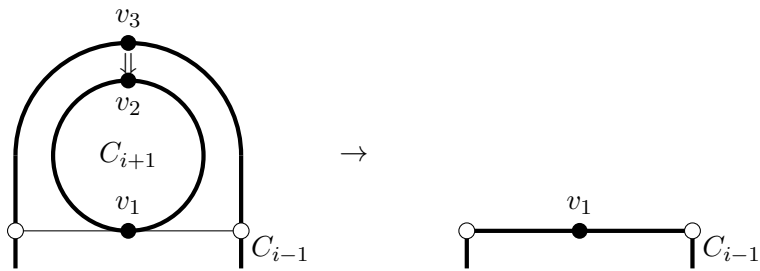
\begin{figure}[h]
\begin{center}
\begin{tikzpicture}

	\draw[ultra thick] (1.5,.5) circle (1);

\fill[color=black] (1.5,-.5) circle (3pt);
\fill[color=black] (1.5,1.5) circle (3pt);

\draw (1.5,-.16) node {$v_1$};
\draw (1.5,1.16) node {$v_2$};
\draw (1.5,2.33) node {$v_3$};

\draw[ultra thick] (3,.5) arc (0:180:1.5);
\fill[color=black] (1.5,2) circle (3pt) node[below] {$\Downarrow$}; 

\draw[ultra thick] (0,.5) -- (0,-1);
\draw (0,-.5) -- (1.5,-.5) -- (3,-.5);
\draw[ultra thick] (3,-1) -- (3,.5);

	\fill[color=white] (0,-.5) circle (3pt);
	\draw (0,-.5) circle (3pt);
	\fill[color=white] (3,-.5) circle (3pt);
	\draw (3,-.5) circle (3pt);

\draw (3,-.75) node[right] {$C_{i-1}$};
\draw (1.5,.5) node {$C_{i+1}$};

\draw (4.5,.5) node {$\rightarrow$};

\draw[ultra thick] (6,-1) -- (6,-.5) -- (7.5,-.5) -- (9,-.5) -- (9,-1);
\draw (9,-.75) node[right] {$C_{i-1}$};

	\fill[color=white] (6,-.5) circle (3pt);
	\draw (6,-.5) circle (3pt);
	\fill[color=white] (9,-.5) circle (3pt);
	\draw (9,-.5) circle (3pt);

\fill[color=black] (7.5,-.5) circle (3pt);

\draw (7.5,-.16) node {$v_1$};

\end{tikzpicture}
	\caption{Applying the Accordion Reduction Proposition \ref{prop:accordionreduction}.}
	\label{fig:accordionreduction}
\end{center}
\end{figure}


\begin{proof}
First use the Leaf Reduction Proposition \ref{prop:leafreduction} to remove the leaves.  Observe that this does not change the structure of the cycles in consideration here.

Recall that the accordion and the cycle $C_{i+1}$ share one black vertex $v_1$ that is two-valent on $C_{i+1}$.  Then since there are two such vertices on each cycle by the Periphery Proposition Lemma \ref{lem:cyclesPropertyK}, there is some other black two-valent vertex $v_2$ on $C_{i+1}$.  By Theorem \ref{thm:blackvertices} this must arise from a black two-valent vertex $v_3$ on $C_{i-1}$ since $C_{i+1}$ is not connected to any other cycles.

Delete all faces on and inside $C_{i+1}$ except the one that is part of the accordion on black vertex $v_1$.  Then this black vertex is now two-valent on $C_{i-1}$, replacing the former one $v_3$.  Furthermore, the cycle structure in $\Gamma_{i-1}$ outside of $C_{i+1}$ remains the same.
\end{proof}

\begin{remark}
One cannot employ the Accordion Reduction Proposition \ref{prop:accordionreduction} while considering the larger interior graph $\Gamma_{i-2}$.  Moreover, employing this reduction move ignores all of $\Gamma_{i+1}$.
\end{remark}

\begin{proposition}
\label{prop:partyhatreduction}
\textbf{Party Hat Reduction Proposition.}
Suppose that a party hat arises when $C_{i-1}$ is deleted from the interior graph $\Gamma_{i-1}$ as in the Party Hat Proposition \ref{prop:2b2break}.  Furthermore, suppose that the interior cycle $C_{i+1}$ is connected to $C_i$ by the party hat but is not connected to any other cycles.  Then the interior graph $\Gamma_{i-1}$ can be reduced to one without $C_{i+1}$ by a local move as depicted in Figure \ref{fig:partyhatreduction}.
\end{proposition}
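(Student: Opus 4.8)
The plan is to mirror the Accordion Reduction Proposition \ref{prop:accordionreduction} as closely as possible, using the structural description furnished by the Party Hat Proposition \ref{prop:2b2break}. First I would apply the Leaf Reduction Proposition \ref{prop:leafreduction} to strip all leaves, observing as before that this leaves the cycles --- and in particular the two-valent black vertices --- untouched. I would then recall from the Party Hat Proposition \ref{prop:2b2break} that the two black two-valent vertices $v_1$ and $v_2$ of $C_{i+1}$ are exactly the two outermost black neighbors of the white cutvertex $u_1$ in the cyclic order about $u_1$, and that $u_1$ lies on the odd component $C_i$ produced when $u_1$ is broken (Lemma \ref{lem:odd}).

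The core of the proof is the local move itself. Since $C_{i+1}$ is assumed to be connected to no cycle other than $C_i$, Theorem \ref{thm:blackvertices} tells us that both of its two-valent black vertices come from this single party hat, so no extra stacking from $C_{i-1}$ feeds into $C_{i+1}$. I would then delete all faces lying on or inside $C_{i+1}$ except for those of the party hat incident to $u_1$, thereby collapsing the cone of the hat; as on the right-hand side of Figure \ref{fig:partyhatreduction}, this leaves the puff ball $C_i$ attached to $C_{i-1}$ through the surviving vertices about $u_1$. What remains to check is routine face-chasing: the reduced graph stays bipartite, all of its bounded faces stay square, the black-vertex valences are unchanged, and the configuration of every interior cycle other than $C_{i+1}$ is preserved.

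The main obstacle I anticipate is the asymmetry that separates this case from the accordion. In the Accordion Reduction Proposition \ref{prop:accordionreduction} one of the two two-valent vertices of $C_{i+1}$ descends from a two-valent vertex of $C_{i-1}$ via the Stacking Proposition \ref{prop:2to2}, and it is precisely this that lets that vertex be ``returned'' to $C_{i-1}$; here both two-valent vertices instead sit on $C_{i+1}$ and are bound to the distinguished cutvertex $u_1$ on the neighboring puff ball. The delicate point is therefore to verify that collapsing the cone reattaches $C_i$ to $C_{i-1}$ without creating a new cutvertex, without disturbing the parity that singled $C_i$ out as the odd component in Lemma \ref{lem:odd}, and without breaking the balance of the two vertex sets. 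I expect this to be handled by running the face-chasing argument of the Party Hat Proposition \ref{prop:2b2break} in reverse, together with an appeal to the Periphery Proposition Lemma \ref{lem:cyclesPropertyK} to confirm that $C_i$ still carries exactly its two two-valent black vertices after the move.
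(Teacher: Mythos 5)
Your proposal follows the paper's proof in all essentials: leaf reduction first, identification of the two two-valent black vertices $v_1,v_2$ of $C_{i+1}$ as arising solely from the party hat (the paper derives this from the Periphery Proposition Lemma \ref{lem:cyclesPropertyK} together with the Stacking Proposition \ref{prop:2to2} rather than from Theorem \ref{thm:blackvertices}, but the conclusion is the same), and then a local deletion of the faces on and inside $C_{i+1}$. The one place you are imprecise is the load-bearing detail of the move: the paper retains not only the cone of the party hat but also the two faces adjacent to it at $v_1$ and $v_2$, precisely so that after the deletion $v_1$ and $v_2$ sit on the new $C_{i-1}$ as \emph{three}-valent vertices; your assertion that the black-vertex valences are unchanged is false for $v_1,v_2$, and keeping only the faces of the cone incident to $u_1$ would leave them two-valent on the new periphery, giving $C_{i-1}$ more than its two permitted two-valent black vertices and breaking the inductive structure. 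Relatedly, your closing check is aimed at the wrong cycle --- what the Periphery Proposition Lemma must confirm after the move is the two-valent count on the reduced $C_{i-1}$, not on $C_i$.
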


\begin{figure}[h]
\begin{center}
\begin{tikzpicture}

\draw[ultra thick] (0,-4) -- (3,-4);
\draw (3.5,-4) node {$C_{i}$};

\draw[ultra thick] (0,-3) -- (.5,-3);
\draw (.5,-3) -- (.5,-2);
\draw[] (2.5,-2) -- (2.5,-3);
\draw[ultra thick] (2.5,-3) -- (3,-3);

\draw[ultra thick] (.5,-2) -- (.5,-3);
\draw[ultra thick] (2.5,-2) -- (2.5,-3);

\draw[ultra thick, loosely dotted] (2.5,-2) arc (0:180:1);
\draw (3.5,-3) node {$C_{i-1}$};

\draw (1.5,-2.5) node {$C_{i+1}$};

\draw (1.5,-1) -- (1.5,-1.5);

\draw (.5,-4) -- (.5,-3) -- (1,-3) -- (1.5,-4) -- (2,-3) -- (2.5,-3) -- (2.5,-4);

\draw[] (1,-3) -- (2,-3);
\draw[ultra thick, loosely dotted] (2,-2.5) arc (0:180:.5);
\draw[ultra thick, loosely dotted] (2,-3) -- (2,-2.5);
\draw[ultra thick, loosely dotted] (1,-3) -- (1,-2.5);
\draw[ultra thick] (1,-3) -- (2,-3);

\foreach \x/ \y in {.5/-3, 1.5/-4, 1.5/-3, 1.5/-2, 2.5/-3}
	{
	\fill[color=white] (\x,\y) circle (3pt);
	\draw (\x,\y) circle (3pt);
	}

\foreach \x/ \y in {.5/-4, 1/-3, 1.5/-1, 2/-3, 2.5/-4}
	{
	\fill[color=black] (\x,\y) circle (3pt);
	}

\draw (4.5,-3.5) node {$\rightarrow$};

\draw[ultra thick] (5.5,-4) -- (8.5,-4);
\draw (9,-4) node {$C_{i}$};

\draw[ultra thick] (5.5,-3) -- (8.5,-3);
\draw (9,-3) node {$C_{i-1}$};

\draw (6,-4) -- (6,-3) -- (6.5,-3) -- (7,-4) -- (7.5,-3) -- (8,-3) -- (8,-4);

\foreach \x/ \y in {6/-3, 7/-4, 7/-3, 8/-3}
	{
	\fill[color=white] (\x,\y) circle (3pt);
	\draw (\x,\y) circle (3pt);
	}

\foreach \x/ \y in {6/-4, 6.5/-3, 7.5/-3, 8/-4}
	{
	\fill[color=black] (\x,\y) circle (3pt);
	}

\draw (.91,-3.33) node {$v_1$};
\draw (2.09,-3.33) node {$v_2$};

\draw (6.41,-3.33) node {$v_1$};
\draw (7.59,-3.33) node {$v_2$};

\end{tikzpicture}
	\caption{Applying the Party Hat Reduction Proposition \ref{prop:partyhatreduction}.}
	\label{fig:partyhatreduction}
\end{center}
\end{figure}
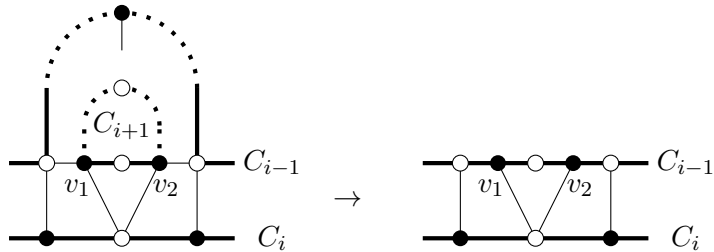

\begin{proof}
First use the Leaf Reduction Proposition \ref{prop:leafreduction} to remove the leaves.  Observe that this does not change the structure of the cycles in consideration here.

Recall that the party hat and the cycle $C_{i+1}$ share two black vertices $v_1$ and $v_2$ that are two-valent on $C_{i+1}$.  Then since there are only two such vertices on each cycle by the Periphery Proposition Lemma \ref{lem:cyclesPropertyK}, there are no others.  Thus by the Stacking Proposition \ref{prop:2to2} there cannot be any two-valent black vertices on $C_{i-1}$ in the vicinity of $C_{i+1}$.

Delete all faces on and inside $C_{i+1}$ except for those that are part of the cone of the party hat along with the two that are adjacent to it (one on $v_1$ and one on $v_2$).  Then these black vertices $v_1$ and $v_2$ are now on $C_{i-1}$ and three-valent there.  Furthermore, the cycle structure and black vertices in $\Gamma_{i-1}$ outside of $C_{i+1}$ remain the same.
\end{proof}

\begin{remark}
One cannot employ the Party Hat Reduction Proposition \ref{prop:partyhatreduction} while considering the larger interior graph $\Gamma_{i-2}$.  Moreover, employing this reduction move ignores all of $\Gamma_{i+1}$.
\end{remark}

These last two reduction moves will be used in the proof of Theorem \ref{conj:sum}.


%

\section{Using the cycles to study the graph of perfect matchings}
\label{sec:graphofpm}

These cycles $C_i$ emerge when the symmetric difference is taken of $\widehat{0}$ and $\widehat{1}$, the unique minimum and maximum elements in the graph $\mathcal{G}$ of perfect matchings of $\Gamma$ when seen as a lattice.



Decompose each cycle $C_i$ into two perfect matchings on the cycle subgraph:  the collection $\mu_i^0$ of edges that traverse from black to white in a clockwise direction and the collection $\mu_i^1$ of edges that traverse from black to white in a counterclockwise direction.  This orientation of course assumes the topological properties of the plane embedding of the original knot diagram.


\begin{definition}
A cycle is said to be \emph{$(\mu_1,\mu_2)$-alternating} if the edges appear alternately in the two matchings $\mu_1$ and $\mu_2$.
\end{definition}

\begin{theorem}
\label{thm:clocked}
Each $C_i$ is $(\widehat{0},\widehat{1})$-alternating.  Furthermore, the set of leaves is exactly the set of edges that appear in both of these states.
\end{theorem}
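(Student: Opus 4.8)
The plan is to identify the two extreme states explicitly: I will show that, up to swapping the labels $0$ and $1$ independently on each cycle, the clocked state $\widehat{0}$ is obtained by placing on every cycle $C_i$ one of its two cycle-matchings and adjoining all the leaf edges, while the counterclocked state $\widehat{1}$ places the opposite cycle-matching on every $C_i$ and adjoins the same leaf edges. Granting this decomposition, both assertions are immediate. For the first, on a fixed $C_i$ the states $\widehat{0}$ and $\widehat{1}$ induce the two distinct perfect matchings of the even cycle $C_i$ (recall $\Gamma$ is bipartite), and these two matchings alternate around $C_i$; hence $C_i$ is $(\widehat{0},\widehat{1})$-alternating in the sense of the definition preceding the statement. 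For the second, the two perfect matchings of an even cycle are edge-disjoint, so the cycle contributions to $\widehat{0}$ and $\widehat{1}$ share no edge, and therefore $\widehat{0}\cap\widehat{1}$ is exactly the set $L$ of leaf edges.

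The main tool is the symmetric difference. Since $\widehat{0}$ and $\widehat{1}$ are perfect matchings, every vertex has degree one in each, so each vertex has degree $0$ or $2$ in $\widehat{0}\triangle\widehat{1}$; consequently $\widehat{0}\triangle\widehat{1}$ is a vertex-disjoint union of even cycles on which the two states alternate, and the edges outside this union are precisely the common edges $\widehat{0}\cap\widehat{1}$. It therefore suffices to prove that this family of alternating cycles is exactly $\{C_i\}$ and that the common edges are exactly $L$. The required local data is already in hand: by the corollary following Theorem \ref{prop:concentriccircles}, each periphery $C_i$ is resonant in its interior graph $\Gamma_i$, so $C_i$ is $\mu$-alternating for some perfect matching $\mu$, and the two ways of alternating are the matchings $\mu_i^{0}$ and $\mu_i^{1}$.

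First I would prove the inclusion $L\subseteq\widehat{0}\cap\widehat{1}$, and more precisely that once the cycle-matchings are fixed the leaf edges are \emph{forced}. Each alternating path of leaves terminates, in the reduced graph obtained after committing the cycles, in a one-valent vertex; matching that vertex forces its unique incident leaf edge, and the forcing cascades back along the path exactly as in the argument of Lemma \ref{lem:odd}. Thus every leaf edge belongs to every perfect matching compatible with the cycle structure, in particular to both extreme states. The reverse inclusion follows once the first assertion is known: a cycle edge lies on some $C_i$ and, being alternately distributed, lies in exactly one of $\widehat{0},\widehat{1}$, while a radial edge joining two distinct cycles lies in neither; hence no non-leaf edge can lie in both.

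The heart of the argument, and the step I expect to be the main obstacle, is to show that the \emph{global} extreme states restrict to a full cycle-matching on every $C_i$ simultaneously, so that neither $\widehat{0}$ nor $\widehat{1}$ ever matches a cycle vertex along a radial or leaf edge. My plan is an induction peeling the periphery $C_1$ off the nested family: using that each interior graph $\Gamma_i$ is itself a balanced overlaid Tait graph (the remark after Theorem \ref{prop:concentriccircles}) and that its periphery is resonant, I would argue that $\widehat{0}$ restricts to a cycle-matching on $C_1$ and to the clocked state of the interior, invoking Kauffman's uniqueness in Theorem \ref{thm:clocklattice:Kauff} to identify the pieces. The delicate points are the junctions produced by breaking components: at an accordion (Proposition \ref{prop:accordion}) or a party hat (Proposition \ref{prop:2b2break}) the cycle family branches, and one must check that the paired two-valent black vertices supplied by Theorem \ref{thm:blackvertices} make the alternation on the child cycles compatible with that on the parent. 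Tracking the clockwise versus counterclockwise orientation together with the inside/outside position of each cycle — which is exactly what decides whether a shared edge reads black-to-white or white-to-black under the clock move — is the bookkeeping that makes this compatibility work, and it is where the planarity of $\Gamma$ enters essentially.
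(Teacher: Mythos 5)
Your high-level decomposition is the same one the paper uses -- both extreme states should be a perfect matching of each cycle $C_i$ together with all the leaf edges, after which both assertions are immediate -- but the step you yourself flag as ``the heart of the argument'' is exactly the content of the theorem, and your proposed route does not close it. Resonance of $C_i$ in $\Gamma_i$ (the corollary to Theorem \ref{prop:concentriccircles}) only guarantees that \emph{some} perfect matching of $\Gamma_i$ alternates on $C_i$; it says nothing about whether the particular matchings $\widehat{0}$ and $\widehat{1}$ do. Your plan to ``peel off'' $C_1$ and invoke Kauffman's uniqueness on the interior also presupposes that $\widehat{0}$ restricted to $\Gamma_2$ is the clocked state of $\Gamma_2$ for an appropriate choice of stars, which is again an unproven claim of the same kind. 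Your leaf-forcing argument for $L\subseteq\widehat{0}\cap\widehat{1}$ is likewise conditional on the cycles already being ``committed,'' so nothing in the proposal actually establishes the key fact that no cycle vertex is matched along a radial or leaf edge in an extremal state.

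The paper closes this gap by running the argument in the opposite direction: it \emph{constructs} the candidate state $\bigcup_i\mu_i^0\cup L$ and shows it admits no counterclock move, so that the uniqueness clause of Kauffman's Clock Theorem \ref{thm:clocklattice:Kauff} forces it to equal $\widehat{0}$ (and symmetrically for $\widehat{1}$). Concretely, a counterclock move needs two matched edges $e_i$, $e_j$ oriented white-to-black clockwise on a common square face $f$; since a cycle edge of $\mu_i^0$ reads black-to-white on the face \emph{inside} $\Gamma_i$, such a face $f$ must lie outside the relevant interior graphs, and a short case analysis (both edges on cycles, one on a cycle and one a leaf, both leaves) contradicts the maximality of the cycles built in Theorem \ref{prop:concentriccircles} via the Inchworm Lemma \ref{lem:inchworm}. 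If you want to salvage your outline, you should replace the resonance/peeling step with an argument of this type -- i.e., verify directly that your candidate decomposition is clocked (resp.\ counterclocked) and then appeal to uniqueness -- rather than trying to deduce the restriction property from properties of the interior graphs.
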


\begin{proof}
Consider first the union of the $\mu_i^0$ together with the leaves; to see this is the unique least element $\widehat{0}$ of Kauffman's clock lattice $\mathcal{L}$, it is enough to show that it cannot be counterclocked.  A counterclock move can only occur when two edges $e_i$ and $e_j$ (going clockwise from white vertex to black vertex on the boundary of the same square face $f$) belong to the pefect matching.


Recall that for an edge $e_i$ in $\widehat{0}$ to belong to a cycle $C_i$, it must go from black vertex to white vertex along the face within $\Gamma_i$.  Thus if the edge $e_i$ belongs to $C_i$, the face $f$ must lie outside of $\Gamma_i$; if this is the case for both $e_i$ and $e_j$, the cycles $C_i$ and $C_j$ are neighboring, but then the cycles $C_i$ and $C_j$ could have been extended through this face $f$ creating a larger cycle bounding $\Gamma_i\cup\Gamma_j\cup\{f\}$, a contradiction.  Then $e_j$ must be a leaf; if $e_i$ is not, the edge $e_j$ lies outside of $C_i$, but this could have been extended to a larger cycle bounding $\Gamma_i\cup\{f\}$ by the Inchworm Lemma \ref{lem:inchworm}.  Then $e_i$ is a leaf, as well, and a new cycle $C'_{ij}$ could have been created with interior $\Gamma'_{ij}=f$, a contradiction.


The proof that the union of the $\mu_i^1$ together with the leaves cannot be clocked is similar.
\end{proof}

See for example the clocked and counterclocked states of a knot diagram for $K11n157$ in Figure \ref{fig:thm:clocked}.  Notice that in particular when these are overlapped there are two concentric cycles and a single leaf in the center.

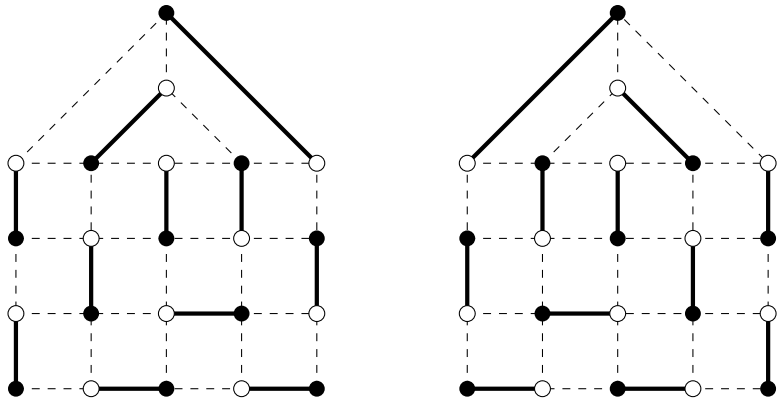
\begin{figure}[h]
\begin{center}
\begin{tikzpicture}

\foreach \x in {0,...,4}
	{
		\draw[dashed] (\x,0) -- (\x,3);
	}

\foreach \y in {0,...,3}
	{
		\draw[dashed] (0,\y) -- (4,\y);
	}

\draw[dashed] (0,3) -- (2,5) -- (4,3);
\draw[dashed] (2,4) -- (2,5);
\draw[dashed] (1,3) -- (2,4) -- (3,3);

\draw[ultra thick] (1,0) -- (2,0);
\draw[ultra thick] (3,0) -- (4,0);
\draw[ultra thick] (4,1) -- (4,2);
\draw[ultra thick] (4,3) -- (2,5);
\draw[ultra thick] (0,3) -- (0,2);
\draw[ultra thick] (0,1) -- (0,0);

\draw[ultra thick] (2,1) -- (3,1);
\draw[ultra thick] (3,2) -- (3,3);
\draw[ultra thick] (1,3) -- (2,4);
\draw[ultra thick] (1,1) -- (1,2);

\draw[ultra thick] (2,2) -- (2,3);


\foreach \x in {6,...,10}
	{
		\draw[dashed] (\x,0) -- (\x,3);
	}

\foreach \y in {0,...,3}
	{
		\draw[dashed] (6,\y) -- (10,\y);
	}

\draw[dashed] (6,3) -- (8,5) -- (10,3);
\draw[dashed] (8,4) -- (8,5);
\draw[dashed] (7,3) -- (8,4) -- (9,3);

\draw[ultra thick] (6,0) -- (7,0);
\draw[ultra thick] (8,0) -- (9,0);
\draw[ultra thick] (10,0) -- (10,1);
\draw[ultra thick] (10,2) -- (10,3);
\draw[ultra thick] (8,5) -- (6,3);
\draw[ultra thick] (6,1) -- (6,2);

\draw[ultra thick] (7,1) -- (8,1);
\draw[ultra thick] (9,1) -- (9,2);
\draw[ultra thick] (9,3) -- (8,4);
\draw[ultra thick] (7,3) -- (7,2);

\draw[ultra thick] (8,2) -- (8,3);

\foreach \x/ \y in {0/0, 2/0, 4/0, 1/1, 3/1, 0/2, 2/2, 4/2, 1/3, 3/3, 2/5}
    {
	\fill[color=black] (\x,\y) circle (3pt);
		}

\foreach \x/ \y in {1/0, 3/0, 0/1, 2/1, 4/1, 1/2, 3/2, 0/3, 2/3, 4/3, 2/4}
    {
	\fill[color=white] (\x,\y) circle (3pt);
	\draw (\x,\y) circle (3pt);
    }

\foreach \x/ \y in {6/0, 8/0, 10/0, 7/1, 9/1, 6/2, 8/2, 10/2, 7/3, 9/3, 8/5}
    {
	\fill[color=black] (\x,\y) circle (3pt);
		}

\foreach \x/ \y in {7/0, 9/0, 6/1, 8/1, 10/1, 7/2, 9/2, 6/3, 8/3, 10/3, 8/4}
    {
	\fill[color=white] (\x,\y) circle (3pt);
	\draw (\x,\y) circle (3pt);
    }

\end{tikzpicture}
	\caption{The clocked and counterclocked states of a diagram for $K11n157$.}
	\label{fig:thm:clocked}
\end{center}
\end{figure}

Now we arrive at the main theorem.

\begin{theorem}
\label{conj:sum}
Consider the balanced overlaid Tait graph $\Gamma$ obtained from a knot diagram, and let $s(C_i)$ be the number of square faces within the interior graph $\Gamma_i$.  Then
\begin{equation}
\label{eq:sum}
\sum_{i}s(C_i)=h
\end{equation}
gives the height of the clock lattice.
\end{theorem}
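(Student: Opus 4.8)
The plan is to read $h$ as a flip distance and then count flips face by face. By Proposition \ref{prop:height}, $h$ is the distance in $\mathcal{G}$ from $\widehat{0}$ to $\widehat{1}$, and every edge of $\mathcal{G}$ is a single clock move on one square face. By Theorem \ref{thm:clocked} the symmetric difference $\widehat{0}\triangle\widehat{1}$ is exactly the disjoint union of the cycles $C_i$ (the leaves, lying in both states, cancel). So the problem becomes: count the minimum number of single-face clock moves carrying $\widehat{0}$ to $\widehat{1}$, and show this equals $\sum_i s(C_i)$.

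First I would set up a per-face potential. Because of Kauffman's Clock Theorem \ref{thm:clocklattice:Kauff}, every state is obtained from $\widehat{0}$ by a sequence of clockwise moves, so a geodesic from $\widehat{0}$ to $\widehat{1}$ may be taken to consist entirely of clockwise moves. Such a path is monotone: no face is ever clocked and then counter-clocked back. Hence $h$ equals the total number of clock moves along it, and this in turn equals $\sum_f N(f)$, where $N(f)$ is the number of times the face $f$ is clocked. The key reduction is therefore to compute $N(f)$ for each square face $f$ of $\Gamma$.

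The heart of the argument is the identity $N(f)=d(f)$, where $d(f):=\#\{\,i : f\in\Gamma_i\,\}$ is the number of cycles $C_i$ enclosing $f$, i.e. the depth of $f$ in the tree of interior graphs produced by Theorem \ref{prop:concentriccircles}. I would prove this by the standard alternating-cycle (height-function) comparison: the local state of a face flips exactly once each time one crosses an alternating cycle of $\widehat{0}\triangle\widehat{1}$, and since every $C_i$ is oriented consistently from $\mu_i^0$ to $\mu_i^1$ (Theorem \ref{thm:clocked}), the net number of clock moves at $f$ is precisely the number of the $C_i$ containing $f$. Granting this, the theorem follows by exchanging the order of summation:
\[
h=\sum_f N(f)=\sum_f d(f)=\sum_f \#\{\,i:f\in\Gamma_i\,\}=\sum_i \#\{\,f:f\in\Gamma_i\,\}=\sum_i s(C_i).
\]
The deliberate over-counting on the left (an innermost face is counted once for every cycle around it) is exactly what makes both sides agree.

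To make the computation of $d(f)$ rigorous across the branching produced by accordions and party hats, I would organize the argument as an induction on the cycle tree using the reduction moves of Section \ref{sec:2valent}. Leaf Reduction (Proposition \ref{prop:leafreduction}) strips the graph leaves without disturbing any cycle or any count $s(C_i)$, so one may assume $\Gamma$ is leafless. Deleting an innermost cycle (a leaf of the cycle tree) by Accordion Reduction (Proposition \ref{prop:accordionreduction}) or Party Hat Reduction (Proposition \ref{prop:partyhatreduction}) removes that cycle together with the faces it encloses, and I would verify that both $h$ and $\sum_i s(C_i)$ drop by exactly the depth-weighted count of those removed faces, using Theorem \ref{thm:blackvertices} to track how the two black two-valent vertices of each cycle migrate. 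The main obstacle I anticipate is precisely this bookkeeping: confirming that excising one branch of the cycle tree changes the flip distance $h$ by exactly the number of clock moves its interior faces contributed, with no hidden interaction elsewhere. This is where the monotonicity granted by the Clock Theorem \ref{thm:clocklattice:Kauff} and the ``no larger cycle could have formed'' reasoning behind the Inchworm Lemma \ref{lem:inchworm} must be applied with care, especially in the accordion and party-hat cases where a single deletion can move a black vertex from one cycle onto another.
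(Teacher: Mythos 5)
Your plan is correct in substance and arrives at the same count, but it is organized around a different principle than the paper's proof. You make the per-face quantity $N(f)=d(f)$ (the number of cycles $C_i$ enclosing $f$) the centerpiece and finish by exchanging the order of summation, $\sum_f d(f)=\sum_i s(C_i)$; this is the height-function viewpoint from the dimer/flip-distance literature, and it buys a clean lower bound for free: since each flip changes the local height of exactly one face by $\pm 1$, \emph{every} path from $\widehat{0}$ to $\widehat{1}$ has length at least $\sum_f d(f)$, which is the step the paper handles only implicitly through its ``must be flipped exactly once'' claims. The paper instead runs a constructive induction on the number of cycles: Lemma \ref{lem:basecase} handles one cycle, Lemma \ref{lem:indhyp} shows that flipping each face of the annulus $\Gamma_{i-1}\backslash\Gamma_i$ once carries $(\mu_{i-1}^0,\mu_i^1)$ to $(\mu_{i-1}^1,\mu_i^0)$ (so the interior is traversed before and after the annulus, which is exactly why a face at depth $d$ is flipped $d$ times), and Lemmas \ref{lem:mainunstackingreduction}, \ref{lem:mainaccordionreduction}, \ref{lem:mainpartyhatreduction}, \ref{lem:mainleafreduction} dispose of the branching; your fallback plan for the accordion and party-hat bookkeeping is essentially this same machinery. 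Two cautions on your version: first, the identity $N(f)=d(f)$ is not available ``off the shelf'' in this paper's self-contained setting --- it is precisely the content of the annulus lemma propagated through the cycle tree, so you would either need to import and verify the hypotheses of the standard height-function theorem for plane bipartite flip graphs or re-prove it, at which point you are doing the paper's work; second, the Clock Theorem \ref{thm:clocklattice:Kauff} gives reachability by monotone sequences, not directly that a \emph{shortest} undirected path is monotone --- the clean way to get that is again the $\pm1$ height-change observation (gradedness), not the Clock Theorem itself.
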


In particular, $s(C_1)$ is equal to the original number of square faces of the graph $\Gamma$.

\begin{proof}
Since $\mathcal{G}$ is connected, there is always at least one possible flip move to make, so $s(C_i)\neq 0$.

Proceed by induction on $k$, the number of cycles.  The base case is handled by Lemma \ref{lem:basecase}, which also shows that the Simply Connected Region Reduction Proposition \ref{prop:unstacking} does not affect Equation \ref{eq:sum} in Lemma \ref{lem:mainunstackingreduction}.  The induction hypothesis on an annulus is then shown in Lemma \ref{lem:indhyp}, barring the Accordion Reduction Proposition \ref{prop:accordionreduction} handled in Lemma \ref{lem:mainaccordionreduction}, the Party Hat Reduction Proposition \ref{prop:partyhatreduction} handled in Lemma \ref{lem:mainpartyhatreduction}, and the Leaf Reduction Proposition \ref{prop:leafreduction} handled in Lemma \ref{lem:mainleafreduction}.


\begin{lemma}
\label{lem:basecase}
\textbf{Base Case: One Cycle.}  
A knot diagram with exactly one connected cycle $C$ has clock lattice height of $s(C)$.
\end{lemma}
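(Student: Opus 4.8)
The plan is to prove the identity $h = s(C)$ by establishing the two inequalities $h \le s(C)$ and $h \ge s(C)$ separately, using the description of $\widehat{0}$ and $\widehat{1}$ supplied by Theorem \ref{thm:clocked}. By that theorem the two extreme states agree on every leaf and differ exactly along the single cycle $C$, whose interior graph $\Gamma_1$ is a topological disk subdivided into its $s(C)$ square faces; the leaves play no role, since they lie in every perfect matching and are never flipped. Thus the whole problem reduces to counting the clock moves needed to rotate the matching across this disk from $\mu_1^0$ to $\mu_1^1$.

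For the lower bound $h \ge s(C)$ I would introduce the standard dimer height potential relative to $\widehat{0}$. Assign to each face $f$ of $\Gamma$ an integer $\phi_\mu(f)$, with $\phi_\mu \equiv 0$ on the infinite face and on every face outside $C$, and with $\phi_\mu$ jumping by $\pm 1$ across each edge of the symmetric difference $\widehat{0}\triangle\mu$ according to its orientation. A single clock move flips one square face and changes $\phi$ on that face alone, by exactly $1$; hence the total potential $\Phi(\mu)=\sum_f \phi_\mu(f)$ changes by exactly $1$ along every edge of $\mathcal{G}$. Because $\widehat{0}\triangle\widehat{1}=C$ is a single cycle, $\phi_{\widehat{1}}$ equals $1$ on each of the $s(C)$ faces inside $C$ and $0$ elsewhere, so $\Phi(\widehat{1})-\Phi(\widehat{0})=s(C)$. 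Any path in $\mathcal{G}$ from $\widehat{0}$ to $\widehat{1}$ must therefore contain at least $s(C)$ edges, giving $h\ge s(C)$.

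For the upper bound $h \le s(C)$ I would build an explicit chain of clock moves $\widehat{0}=\mu_0, \mu_1, \ldots, \mu_{s(C)}=\widehat{1}$, one move for each square face, by peeling the disk one corner square at a time. The Periphery Proposition \ref{prop:PropertyK} guarantees a two-valent black vertex on $C$, which sits in a corner square that is resonant (Definition \ref{def:resonant}) and hence flippable; flipping exactly this face performs a single clock move and, by the Simply Connected Region Reduction Proposition \ref{prop:unstacking}, leaves a disk with one fewer square whose boundary is again a single cycle satisfying the Periphery Proposition. Iterating, each of the $s(C)$ faces is flipped once and the final state is $\widehat{1}$, so $h\le s(C)$. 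Combining the two bounds yields $h=s(C)$, with the single-square case ($s(C)=1$, two matchings, one flip) serving as the trivial anchor of the induction.

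The step I expect to be the main obstacle is the upper bound: I must verify that the corner flips can always be chosen to move strictly upward in the clock order (from tail to head of the directed clock move) and that after each flip the residual region is still a disk satisfying the Periphery Proposition, so that the peeling process is well defined all the way down to a single square. The potential $\Phi$ handles the bookkeeping, certifying that any such monotone chain is automatically shortest; but the geometric claim that an orientation-compatible flippable corner always exists, equivalently that the Simply Connected Region Reduction Proposition \ref{prop:unstacking} can be run in an order consistent with the clock lattice, is the delicate point to nail down.
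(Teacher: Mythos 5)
Your lower bound is fine, and it is in fact an argument the paper never makes explicit: the dimer height potential $\Phi$ changes by exactly one per flip, and $\Phi(\widehat{1})-\Phi(\widehat{0})=s(C)$ since $\widehat{0}\triangle\widehat{1}=C$ bounds $s(C)$ faces, so $h\ge s(C)$. The paper's own proof only exhibits a path of length $s(C)$ by induction, so your potential is a genuine improvement on that half.

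The upper bound is where the gap is, and it is concrete. First, ``resonant and hence flippable'' is a non sequitur: resonance (Definition \ref{def:resonant}) means the face is $\mu$-alternating for \emph{some} perfect matching $\mu$, not for the particular matching $\widehat{0}$ you are currently holding. The corner square $f_1$ at a two-valent black vertex $v$ has vertices $v,u_1,u_2$ and a fourth black vertex $v_2$, and it is flippable in $\widehat{0}$ only if $v_2$ is matched to $u_1$ or $u_2$ there; if $v_2$ sits on a leaf (matched to its leaf partner in every state) or is matched along $C$ to a different neighbor, the flip you want to make does not exist. Second, and more seriously, the square at a two-valent black vertex shares exactly its two edges at $v$ with $C$, i.e.\ it is precisely the $|P_{inner}|=2$ configuration that the paper's proof isolates as the problematic case: peeling it removes the black vertex $v$ but no white vertex, so the residual region inside $C\triangle f_1$ has unequal vertex classes, is not a balanced overlaid Tait graph, and does not satisfy the Periphery Proposition \ref{prop:PropertyK} -- so neither that proposition nor the Simply Connected Region Reduction Proposition \ref{prop:unstacking} can be invoked to continue the recursion. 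The paper avoids exactly this by peeling faces with $|P_{inner}|\in\{1,3\}$ (handling the new leaf that appears in the $|P_{inner}|=3$ case) and showing that $|P_{inner}|=2$ faces can always be paired and replaced by a $3$-then-$1$ peel. To repair your argument you should either adopt that case analysis, or -- more in the spirit of what you already built -- combine your potential with Kauffman's Clock Theorem \ref{thm:clocklattice:Kauff}: it guarantees a path from $\widehat{0}$ to $\widehat{1}$ consisting only of clockwise moves, each clockwise move raises $\Phi$ by exactly one, so that path has length exactly $\Phi(\widehat{1})-\Phi(\widehat{0})=s(C)$, giving $h\le s(C)$ with no peeling at all.
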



\begin{proof}
Induct on the number of squares $s(C)$ within the only cycle $C$; as a base case, a single square has height one.


Suppose there are $s(C)=m$ squares within the cycle and that the induction hypothesis holds for all cycles containing fewer than $m$ squares.  Choose some square face $f_1$ sharing at least one edge with $C$; this produces a new cycle $C'=C\triangle f_1$ within $C$ by the symmetric difference.



The square face $f_1$ cannot share all its four edges with $C$.  It must share consecutive edges; otherwise $C'$ would not be connected and there would be more than one cycle.

Let $P_{outer}$ (respectively $P_{inner}$) be the path formed by those consecutive edges on $f_1$ shared with $C$ (respectively $C'$), and partition this path into $\mu_{outer}^0$ and $\mu_{outer}^1$ (respectively $\mu_{inner}^0$ and $\mu_{inner}^1$), those edges that traverse from white to black in a clockwise direction around $C$ (respectively $C'$) and, respectively, those edges that traverse from white to black in a counterclockwise direction.  Observe that $P_{outer}$ is ``outside'' of $f_1$ and $P_{inner}$ is on the ``inside'', as in Figure \ref{fig:basecase} (A), where black and white vertices are undistinguished throughout.  By the induction hypothesis, there must be some sequence $F$ of $m-1$ flip moves that transfer $\mu_{C'}^0$ to $\mu_{C'}^1$ (and in particular $\mu_{inner}^0$ to $\mu_{inner}^1$), as $P_{inner}$ belongs to $C'$.

\begin{figure}
\begin{center}
\begin{tikzpicture}

\draw (0,2.5) node {$(A)$};
\draw (4.5,2.5) -- (5,2.5);
\draw (7.5,2.5) -- (7,2.5);
\draw (6,2.5) ellipse (1 and .5);
\fill[color=black] (5,2.5) circle (3pt);
\fill[color=black] (7,2.5) circle (3pt);
\draw (6,2.5) node {$f_1$};
\draw (6,3.25) node {$P_{outer}$};
\draw (6,1.75) node {$P_{inner}$};
\draw (6.85,3) node {$C$};
\draw (6.85,2) node {$C'$};

\draw (0,.5) node {$(B)$};

\draw[dashed] (.5,0) -- (3.5,0);
\draw[dashed] (1,0) -- (1,1) -- (3,1) -- (3,0);
\draw (.5,0) -- (1,0);
\draw (1,1) -- (3,1);
\draw (3,0) -- (3.5,0);
\fill[color=black] (1,0) circle (3pt);
\fill[color=black] (3,0) circle (3pt);
\fill[color=black] (1,1) circle (3pt);
\fill[color=black] (3,1) circle (3pt);

\draw (2,-.2) node {$P_{inner}$};
\draw (2,.5) node {$f_1$};
\draw[<->] (3.85,.25) -- (4.15,.25);
\draw (4,.55) node {$F$};

\draw[dashed] (4.5,0) -- (7.5,0);
\draw[dashed] (5,0) -- (5,1) -- (7,1) -- (7,0);
\draw (5,1) -- (7,1);
\draw (5,0) -- (7,0);
\fill[color=black] (5,0) circle (3pt);
\fill[color=black] (7,0) circle (3pt);
\fill[color=black] (5,1) circle (3pt);
\fill[color=black] (7,1) circle (3pt);

\draw[<->] (7.85,.25) -- (8.15,.25);
\draw (8,.5) node {$f_1$};

\draw[dashed] (8.5,0) -- (11.5,0);
\draw[dashed] (9,0) -- (9,1) -- (11,1) -- (11,0);
\draw (9,0) -- (9,1);
\draw (11,0) -- (11,1);
\fill[color=black] (9,0) circle (3pt);
\fill[color=black] (11,0) circle (3pt);
\fill[color=black] (9,1) circle (3pt);
\fill[color=black] (11,1) circle (3pt);

\draw (0,-1.5) node {$(C)$};

\draw[dashed] (.5,-2) -- (3.5,-2);
	\draw[dashed] (1,-2) .. controls (1.66,-1) and (2.33,-1) .. (3,-2);
\draw (.5,-2) -- (1,-2);
\draw (1.66,-2) -- (2.33,-2);
\draw (3,-2) -- (3.5,-2);
\fill[color=black] (1,-2) circle (3pt);
\fill[color=black] (1.66,-2) circle (3pt);
\fill[color=black] (2.33,-2) circle (3pt);
\fill[color=black] (3,-2) circle (3pt);

\draw (2,-1) node {$P_{outer}$};
\draw (2,-1.5) node {$f_1$};
\draw[<->] (3.85,-1.75) -- (4.15,-1.75);
\draw (4,-1.45) node {$F$};

\draw[dashed] (4.5,-2) -- (7.5,-2);
	\draw[dashed] (5,-2) .. controls (5.66,-1) and (6.33,-1) .. (7,-2);
\draw (5,-2) -- (5.66,-2);
\draw (6.33,-2) -- (7,-2);
\fill[color=black] (5,-2) circle (3pt);
\fill[color=black] (5.66,-2) circle (3pt);
\fill[color=black] (6.33,-2) circle (3pt);
\fill[color=black] (7,-2) circle (3pt);

\draw[<->] (7.85,-1.75) -- (8.15,-1.75);
\draw (8,-1.5) node {$f_1$};

\draw[dashed] (8.5,-2) -- (11.5,-2);
	\draw (9,-2) .. controls (9.66,-1) and (10.33,-1) .. (11,-2);
\draw (9.66,-2) -- (10.33,-2);
\fill[color=black] (9,-2) circle (3pt);
\fill[color=black] (9.66,-2) circle (3pt);
\fill[color=black] (10.33,-2) circle (3pt);
\fill[color=black] (11,-2) circle (3pt);

\draw (1.66,-2.33) node {$v_1$};
\draw (2.33,-2.33) node {$v_2$};

\draw (0,-3.5) node {$(D)$};

\draw (.5,-4) -- (3.5,-4);
\draw (1,-4) -- (1.66,-3) -- (2.33,-4);
\fill[color=black] (1,-4) circle (3pt);
\fill[color=black] (1.66,-4) circle (3pt);
\fill[color=black] (1.66,-3) circle (3pt);
\fill[color=black] (2.33,-4) circle (3pt);
\fill[color=black] (3,-4) circle (3pt);

\draw (1.66,-3.5) node {$f_1$};
\draw[<->] (3.85,-3.75) -- (4.15,-3.75);
\draw (4,-3.45) node {$f_2$};

\draw (4.5,-4) -- (7.5,-4);
\draw (5,-4) -- (5.66,-3) -- (6.33,-4);
\draw (5.66,-3) -- (6.33,-3) -- (7,-4);
\fill[color=black] (5,-4) circle (3pt);
\fill[color=black] (5.66,-4) circle (3pt);
\fill[color=black] (5.66,-3) circle (3pt);
\fill[color=black] (6.33,-4) circle (3pt);
\fill[color=black] (6.33,-3) circle (3pt);
\fill[color=black] (7,-4) circle (3pt);

\draw (5.66,-3.5) node {$f_1$};
\draw (6.33,-3.5) node {$f_2$};
\draw[<->] (7.85,-3.75) -- (8.15,-3.75);
\draw (8,-3.45) node {$f_1$};

\draw (8.5,-4) -- (11.5,-4);
\draw (10.33,-4) -- (9.66,-3) -- (10.33,-3) -- (11,-4);
\fill[color=black] (9,-4) circle (3pt);
\fill[color=black] (9.66,-4) circle (3pt);
\fill[color=black] (9.66,-3) circle (3pt);
\fill[color=black] (10.33,-4) circle (3pt);
\fill[color=black] (10.33,-3) circle (3pt);
\fill[color=black] (11,-4) circle (3pt);
\draw (10.33,-3.5) node {$f_2$};

\end{tikzpicture}
	\caption{
		Lemma \ref{lem:basecase} provides the base case for Theorem \ref{conj:sum}, adding new square face $f_1$.  The vertex sets are undistinguished.}
	\label{fig:basecase}
\end{center}
\end{figure}
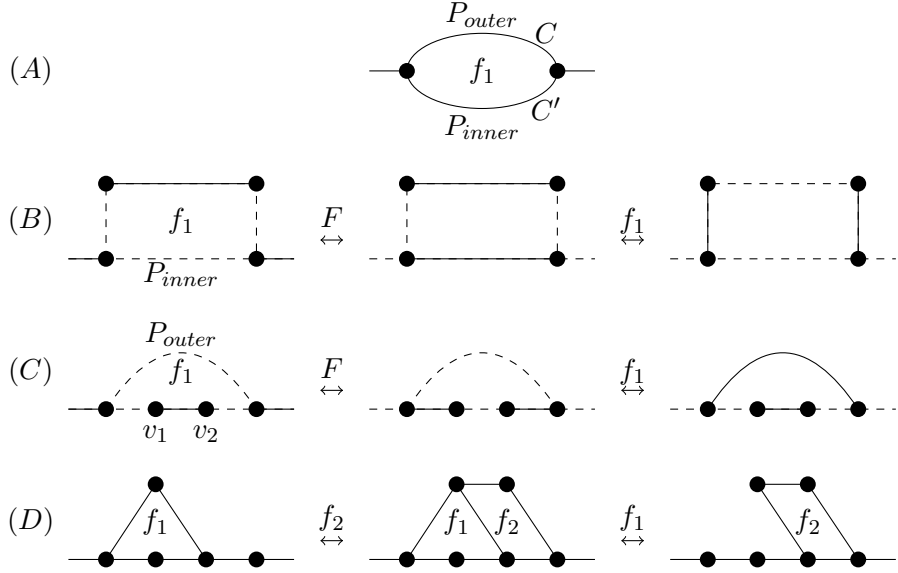

First suppose that $|P_{inner}|=1$; then either $f_1$ followed by $F$ or $F$ followed by $f_1$ transfers $\mu^0_{C}$ to $\mu^1_{C}$ in a total of $m$ moves, as in Figure \ref{fig:basecase} (B).  The other cases involve the inclusion of new leaves.

Supposing that $|P_{inner}|=3$, there are two additional vertices $v_1$ and $v_2$ in $P_{inner}$ that are inside of $C$ but not contained in $C$.  If there are no other leaves inside of $C$, then the clocked state of $C$ is $\mu^0_{C}$ together with the leaf $\ell$ between $v_1$ and $v_2$.  Then as before either $f_1$ followed by $F$ or $F$ followed by $f_1$ transfers $\mu^0_C \amalg \ell$ to $\mu^1_C \amalg \ell$ in $m$ moves, as in Figure \ref{fig:basecase} (C).  Otherwise it may be possible for this leaf to share a face with two other vertices in a leaf; however this would form a new cycle $C''$ within $C$, a contradiction, and thus there is just a single additional leaf.



Removing a square $f_1$ with $|P_{inner}|=2$ yields a bipartite graph with unequally-sized vertex sets; thus two such squares $f_1$ and $f_2$ need to be removed at a time since no other case allows for unequally-sized vertex sets.  These squares may not share more than two of the same edges since the ``inside'' paths $P_{inner}$ cannot overlap, and the case where both $P_{inner}$ are the same two edges violates Theorem \ref{prop:concentriccircles}.  These squares $f_1$ and $f_2$ cannot share two of the same edges, as this would not yield equally-sized vertex sets.  When $f_1$ and $f_2$ are disjoint this also violates Theorem \ref{prop:concentriccircles}.


Therefore $f_1$ and $f_2$ must share a single edge as depicted in Figure \ref{fig:basecase} (D); deleting first $f_2$ (as on the left side) and then $f_1$ yields the same result as deleting first $f_1$ (as on the right side), which has $|P_{inner}|=3$, and then $f_2$, which has $|P_{inner}|=1$; thus here the $|P_i|=2$ squares are unneccessary.
\end{proof}

This can also be used to handle any simply connected region.

\begin{lemma}
\label{lem:mainunstackingreduction}
\textbf{Invariance of equality under the Simply Connected Region Reduction Proposition.}
The Simply Connected Region Reduction Proposition \ref{prop:unstacking} does not affect the equality of Theorem \ref{conj:sum}.
\end{lemma}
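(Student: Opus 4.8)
The plan is to show that unstacking a simply connected region $H$ with $s(H)$ square faces decreases both sides of Equation \ref{eq:sum} by exactly $s(H)$, so that the equality is preserved; this is the sense in which the reduction ``does not affect'' the equation. For the left-hand side I would use that, by Proposition \ref{prop:unstacking}, the region $H$ is a simply connected induced subgraph all of whose vertices lie on the bounding cycle and all but one of whose peripheral edges lie on that cycle. In particular $H$ encloses no interior cycle, so each of its $s(H)$ square faces is counted in $s(C_{i-1})$ and in no other $s(C_j)$. Deleting $H$ replaces the portion of $C_{i-1}$ that bulged around $H$ by the single chord $v_1u_1$, removing precisely these $s(H)$ faces while leaving every other interior graph $\Gamma_j$, and hence every other summand $s(C_j)$, untouched. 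Thus $\sum_i s(C_i)$ drops by exactly $s(H)$, and the reduced boundary still satisfies the Periphery Proposition \ref{prop:PropertyK} (the new two-valent vertex $v_1$), so the formula still applies to the reduced graph.

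For the right-hand side I would invoke the argument already built into Lemma \ref{lem:basecase}. The unstacking peels the faces of $H$ off one at a time: at each stage the outermost remaining face is a square $f$ sharing a consecutive run of edges with the current boundary, and deleting $f$ is exactly the inductive move $C' = C \triangle f$ of that lemma. The proof of Lemma \ref{lem:basecase} shows that a minimal sequence of flip moves carrying the clocked state $\widehat{0}$ to the counterclocked state $\widehat{1}$ across the larger region is obtained from such a sequence across the smaller region by inserting a single flip at $f$ (the cases $|P_{inner}|=1,3$, the $|P_{inner}|=2$ faces being unnecessary). Hence each peeled face changes the height by exactly one, and iterating over all $s(H)$ faces shows that $h$ decreases by exactly $s(H)$.

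The main obstacle will be justifying that this flip-counting \emph{localizes} to $H$, so that the base-case step applies verbatim despite the surrounding cycle structure. Here I would use that $H$ meets the rest of $\Gamma_{i-1}$ only along the bounding cycle together with the single chord edge $v_1u_1$; therefore a flip performed inside $H$ alters no edge incident with the remainder of the graph and commutes with any flip performed elsewhere. By Theorem \ref{thm:clocked} the states $\widehat{0}$ and $\widehat{1}$ restrict to the two extremal cyclic orientations on $H$, and these two orientations of the bounding cycle are unaffected by the presence or absence of $H$'s interior, so reversing $H$ can be carried out independently of the flips reversing the other cycles. With both sides of \ref{eq:sum} decreasing by $s(H)$, the equality is preserved, which is the assertion of the lemma.
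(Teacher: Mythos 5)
Your proposal is correct and follows essentially the same route as the paper: the paper's own proof simply generalizes the single face $f_1$ of Lemma \ref{lem:basecase} (the $|P_{inner}|=1$ case, with $P_{outer}$ extended to arbitrary odd length) to the whole simply connected region $H$ and lets that lemma's peeling argument do the work, which is exactly the content of your second and third paragraphs. One caveat on your bookkeeping: the claim that each face of $H$ is counted in $s(C_{i-1})$ ``and in no other $s(C_j)$'' overlooks that the interior graphs are nested, so $H\subseteq\Gamma_j$ for every ancestor $j<i-1$ and those summands would also change under a global deletion --- this is precisely why the paper's Remark \ref{rem:globloc} restricts the Simply Connected Region Reduction to a local move within $\Gamma_{i-1}$, a restriction your ``both sides drop by $s(H)$'' framing should state explicitly.
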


\begin{proof}
Following the proof of Lemma \ref{lem:basecase} above for the $|P_{inner}|=1$ case, extend the single square $f_1$ to any set of simply connected squares.  Extend the path $P_{outer}$ from length three in the case above to any odd length (since the graph is bipartite) in the natural way, and the result holds by applying Lemma \ref{lem:basecase}.
\end{proof}

We proceed to the induction step:  a single annulus between two cycles $C_{i-1}$ and $C_i$ with no leaves.

\begin{lemma}
\label{lem:indhyp}
\textbf{Induction Hypothesis:  Flipping an Annulus.}  
Suppose the interior graph $\Gamma_{i-1}$ has exactly one cycle $C_i$ at the next level inside of it.  Flipping all the square faces in $\Gamma_{i-1}\backslash\Gamma_i$ exactly once takes the local perfect matchings of $\mu_{i-1}^0$ and $\mu_i^1$ to those of $\mu_{i-1}^1$ and $\mu_i^0$.
\end{lemma}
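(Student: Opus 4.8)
The plan is to separate the assertion into a homological bookkeeping step, which fixes the two endpoints and the exact number of flips, and a scheduling step, which produces an order in which the individual flips are legal clock moves.

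First I would record the shape of the annulus $A=\Gamma_{i-1}\backslash\Gamma_i$. Because $\Gamma_{i-1}$ has exactly one cycle $C_i$ at the next level and no leaves, every vertex of $A$ lies on $C_{i-1}$ or on $C_i$, so the configuration $\mu_{i-1}^0\cup\mu_i^1$ is already a perfect matching of $A$: the cycle $\mu_{i-1}^0$ saturates $V(C_{i-1})$ and $\mu_i^1$ saturates $V(C_i)$, and no radial edge is used. The same holds for $\mu_{i-1}^1\cup\mu_i^0$. Writing $M_0=\mu_{i-1}^0\cup\mu_i^1$ and $M_1=\mu_{i-1}^1\cup\mu_i^0$, and using that $\mu_j^0$ and $\mu_j^1$ are complementary matchings of the even cycle $C_j$, so that $\mu_j^0\triangle\mu_j^1=E(C_j)$, I obtain
\[
M_0\triangle M_1=E(C_{i-1})\cup E(C_i).
\]

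Next comes the accounting. A single flip on a square face $f$ changes the current matching by the symmetric difference with $\partial f$. Regarding the square faces of the planar region $A$ as $2$-chains over $\mathbb{Z}/2$, their boundaries sum to the boundary of $A$, which is exactly $E(C_{i-1})\cup E(C_i)$, since the interior radial edges cancel in pairs. Hence, no matter in what order the squares are flipped, flipping each exactly once changes $M_0$ by $E(C_{i-1})\cup E(C_i)=M_0\triangle M_1$, producing $M_1$. The number of flips performed equals the number of square faces of $A$, which is $s(C_{i-1})-s(C_i)$. This settles the endpoints and the count; what remains is to show that the squares can be ordered so that every intermediate flip is a genuine clock move.

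For the scheduling step I would first flatten $A$: by the Leaf Reduction Proposition \ref{prop:leafreduction} remove any leaves, and by the Simply Connected Region Reduction Proposition \ref{prop:unstacking} compress every simply connected bulge to a single face, so that $A$ becomes a minimal ring of squares; by Lemma \ref{lem:mainunstackingreduction} none of this alters the equality being proved. On the minimal ring I would then imitate the base case Lemma \ref{lem:basecase}: starting from a two-valent black vertex of $C_{i-1}$ furnished by the Periphery Proposition \ref{prop:PropertyK}, I would propagate an alternating front inward, at each stage flipping the unique square that is currently $\mu$-alternating, and organizing the induction by the length of the inner path of the flipped face exactly as in the cases $|P_{inner}|=1,2,3$ of Figure \ref{fig:basecase}. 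The connectivity and monotonicity guaranteed by Kauffman's Clock Theorem \ref{thm:clocklattice:Kauff} ensure that such a front exists and never stalls before the inner cycle is reached.

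The hard part will be this scheduling step, specifically the verification that each intermediate flip is admissible. The homological computation allows each square to be flipped once, but a clock move is legal only when the face is alternating in the current matching, and unlike $M_0$ and $M_1$ the intermediate matchings necessarily use radial edges, so the alternating front genuinely moves through the interior of $A$. The delicate point is to maintain, as an invariant across the peel, that the part of $A$ not yet processed is again an annular (or, after a cut, strip-like) region whose outer boundary sits in a $\mu^0$-type state while its inner boundary stays fixed at $\mu_i^1$; the paired-square step forced when the inner path has length two, where a single flip would momentarily unbalance the two colour classes, is exactly the subtlety already isolated in the base case and is handled in the same way. Once this invariant is carried through, the process terminates with $C_{i-1}$ in state $\mu_{i-1}^1$ and $C_i$ in state $\mu_i^0$, which is the claim.
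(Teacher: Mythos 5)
Your first step is correct and is actually cleaner than anything the paper writes down: since $\mu_j^0\triangle\mu_j^1=E(C_j)$ and the $\mathbb{Z}/2$-sum of the boundaries of the square faces of the annulus $A=\Gamma_{i-1}\backslash\Gamma_i$ is $E(C_{i-1})\cup E(C_i)$ (interior radial edges cancelling in pairs), any process that flips each face an odd number of times carries $\mu_{i-1}^0\cup\mu_i^1$ to $\mu_{i-1}^1\cup\mu_i^0$. This pins down the endpoints and the count independently of the order of the flips, which the paper never isolates as a separate observation.

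The gap is in the scheduling step, and it is exactly where you say the hard part lies --- but you do not close it. You need an ordering of the faces of $A$ in which each face is $\mu$-alternating in the current matching at the moment it is flipped and each face is used exactly once; your argument for this is ``propagate an alternating front inward \ldots{} the connectivity and monotonicity guaranteed by Kauffman's Clock Theorem ensure that such a front exists and never stalls.'' The Clock Theorem \ref{thm:clocklattice:Kauff} only gives connectivity of the state graph and monotone access from the clocked state; it does not say that a monotone path between these two particular local states visits each face of the annulus exactly once, nor does it produce the front you describe. (Your homological computation only forces each face to be flipped an odd number of times along any such path, not once.) Moreover the base case Lemma \ref{lem:basecase} you propose to imitate is built for a disc, where peeling one face leaves a disc; on an annulus the region left after removing a face need not be of that form, so the $|P_{inner}|=1,2,3$ induction does not transplant directly, and the invariant you name (``the unprocessed part is again annular or strip-like with outer boundary in a $\mu^0$-type state'') is asserted rather than maintained. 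The paper's proof supplies precisely this missing construction: after disposing of the simply connected bulges via Lemma \ref{lem:mainunstackingreduction}, it orients the duals of the radial edges (to the right when leaving the black vertex), observes that every vertex of the resulting ``dual-in-the-annulus'' graph is two-valent so the faces organize into oriented chains with sources and sinks, and then flips outward from the sources, flipping a sink only when reached from both sides --- which is a concrete schedule hitting each square exactly once. Without some such explicit mechanism your proof does not establish the lemma.
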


\begin{proof}
Assume for now that there are no leaves in the annulus.  These will be taken care of by another lemma below.

This annulus has outside face $C_{i-1}$ and inside face $C_i$.  Because there are no leaves, all vertices are on $C_{i-1}$ and $C_i$, so this gives five types of square faces as in Figure \ref{fig:annulussquaretypes}.  First observe that any edge between two vertices of $C_{i-1}$ (respectively two vertices of $C_i$) encloses a simply connected region of squares with $C_{i-1}$ (respectively $C_i$), and so by Lemma \ref{lem:mainunstackingreduction} this does not affect the equality of Theorem \ref{conj:sum}.
This leaves only cases III, IV, and V, where edges must be on $C_{i-1}$ or $C_i$, or must traverse them.

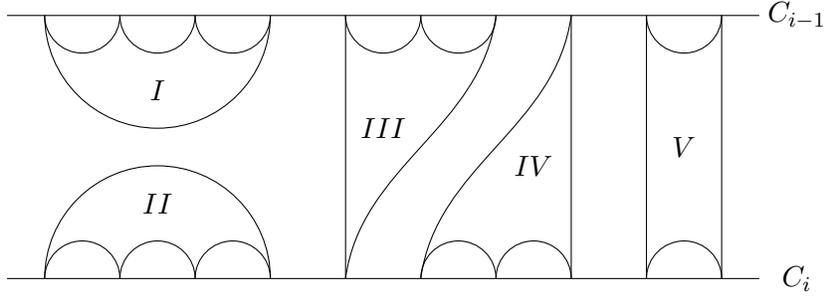
\begin{figure}[h]
\begin{center}
\begin{tikzpicture}

\draw (.5,0) -- (10.5,0);
	\draw (11,0) node {$C_{i}$};
\draw (.5,3.5) -- (10.5,3.5);
	\draw (11,3.5) node {$C_{i-1}$};
\draw (2,0) arc (0:180:.5);
\draw (3,0) arc (0:180:.5);
\draw (4,0) arc (0:180:.5);
\draw (4,0) arc (0:180:1.5);
	\draw (2.5,1) node {$II$};
\draw (1,3.5) arc (180:360:.5);
\draw (2,3.5) arc (180:360:.5);
\draw (3,3.5) arc (180:360:.5);
\draw (1,3.5) arc (180:360:1.5);
	\draw (2.5,2.5) node {$I$};
\draw (5,0) -- (5,3.5);
\draw (5,0) .. controls (5.2,1.5) and (6.8,2) .. (7,3.5);
\draw (5,3.5) arc (180:360:.5);
\draw (6,3.5) arc (180:360:.5);
	\draw (5.5,2) node {$III$};
\draw (6,0) .. controls (6.2,1.5) and (7.8,2) .. (8,3.5);
\draw (7,0) arc (0:180:.5);
\draw (8,0) arc (0:180:.5);
\draw (8,0) -- (8,3.5);
	\draw (7.5,1.5) node {$IV$};
\draw (9,0) -- (9,3.5);
\draw (10,0) arc (0:180:.5);
\draw (9,3.5) arc (180:360:.5);
\draw (10,0) -- (10,3.5);
	\draw (9.5,1.75) node {$V$};


\end{tikzpicture}
	\caption{The five types of squares on an annulus with no leaves.}
	\label{fig:annulussquaretypes}
\end{center}
\end{figure}

Partition these latter edges into two sets:  those with a black vertex on $C_i$ and those with a black vertex on $C_{i-1}$.  Consider the oriented dual edges to these edges, directed to the right when leaving the black vertex.  Then every vertex in this ``dual-in-the-annulus'' graph has valence two, allowing for sources and sinks.

Begin by performing clock moves on all sources, following these oriented edges in the dual graph outwards, and performing a clock move on a sink when it is reached from both sides simultaneously.  Thus each square is counted exactly once, and so this changes $\mu_{i-1}^0$ to $\mu_{i-1}^1$ and $\mu_i^1$ to $\mu_i^0$.
\end{proof}

However it may be the case that there are several interior cycles at the same level; these occur with accordions and party hats.  In each of these configurations, we consider a single additional cycle, turn it into an annulus, and then apply Lemma \ref{lem:indhyp}.

\begin{lemma}
\label{lem:mainaccordionreduction}
\textbf{Invariance of equality under the Accordion Reduction Proposition.}
The Accordion Reduction Proposition \ref{prop:accordionreduction} does not affect the equality of Theorem \ref{conj:sum}.
\end{lemma}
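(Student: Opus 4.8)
The plan is to show that the Accordion Reduction Proposition \ref{prop:accordionreduction} changes the two sides of Equation \ref{eq:sum} by the same amount, so that the equality for the reduced graph (which has one fewer cycle, and hence satisfies the equation by the induction hypothesis of Theorem \ref{conj:sum}) transfers back to $\Gamma_{i-1}$. I will write $\Gamma^{*}$ for the graph produced by the reduction, in which $C_{i+1}$ and its interior have been deleted and the single accordion face on $v_1$ retained, so that $v_1$ becomes two-valent on $C_{i-1}$.

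First I would clear away inessential structure. Using the Leaf Reduction Proposition \ref{prop:leafreduction}, whose invariance is the separate Lemma referenced in the proof of Theorem \ref{conj:sum}, I may assume there are no leaves; and by the accordion-compression move noted after the Accordion Proposition \ref{prop:accordion} --- a local move imitating the Simply Connected Region Reduction Proposition \ref{prop:unstacking}, whose invariance is Lemma \ref{lem:mainunstackingreduction} --- I may assume the accordion joining $C_{i+1}$ to $C_{i-1}$ is a single square face. With these normalizations the reduction deletes exactly the $s(C_{i+1})$ faces interior to $C_{i+1}$, leaving every other face intact.

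Next I would do the bookkeeping on the left-hand side. Because $C_{i+1}$ is connected only to $C_i$ and, through the accordion, to $C_{i-1}$, while meeting no other cycle, the interior graph $\Gamma_{i+1}$ lies inside $\Gamma_{i-1}$ but outside $\Gamma_i$; thus every face of $\Gamma_{i+1}$ is enclosed precisely by $C_{i+1}$ together with the ancestors $C_{i-1},\ldots,C_1$. Writing $D$ for the number of these enclosing cycles, each of the $s(C_{i+1})$ deleted faces is counted in exactly $D$ of the summands of $\sum_{j}s(C_j)$, and since the move is local every surviving face keeps its list of enclosing cycles. Hence $\sum_{j}s(C_j)$ drops by exactly $D\cdot s(C_{i+1})$, the summand $s(C_{i+1})$ itself being among the $D$ summands that lose these faces.

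Finally I would match this drop on the height side using the annulus lemma. The component of $\Gamma_{i-1}\backslash C_{i-1}$ containing $C_{i+1}$ meets the rest of the graph only through the single accordion face, so the square faces of $\Gamma_{i+1}$ can be flipped as a self-contained block; treating the region out to $C_{i-1}$ as an annulus and applying Lemma \ref{lem:indhyp} repeatedly, the interior of $C_{i+1}$ must be flipped once for each of the $D$ enclosing cycles in the passage from $\widehat{0}$ to $\widehat{1}$, contributing $D\cdot s(C_{i+1})$ flips that are absent in $\Gamma^{*}$. Thus $h$ also drops by $D\cdot s(C_{i+1})$, and the equality is preserved. The main obstacle is precisely this last step: one must verify that these $D\cdot s(C_{i+1})$ flips genuinely decouple from those carrying $C_i$ and the outer annuli --- in particular that the single annulus flip of the shared parent $C_{i-1}$ can be arranged, via the sources-and-sinks ordering of the dual-in-the-annulus graph from Lemma \ref{lem:indhyp}, to toggle both siblings $C_i$ and $C_{i+1}$ simultaneously --- so that deleting $C_{i+1}$ neither saves nor costs any moves beyond its own block and the count on both sides agrees.
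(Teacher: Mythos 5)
Your strategy --- show that both sides of Equation \ref{eq:sum} drop by the same amount and then invoke the induction hypothesis on the reduced graph --- is reasonable in outline, but both computations have problems, and the route diverges from the paper's. First, the bookkeeping on the left-hand side is incorrect: the Accordion Reduction does not delete only the $s(C_{i+1})$ faces interior to $C_{i+1}$. By its construction it deletes \emph{all faces on and inside} $C_{i+1}$ except the single accordion face at $v_1$; in particular the collar of square faces lying between the deleted arc of $C_{i-1}$ and $C_{i+1}$ disappears as well. Those collar faces are enclosed only by $C_1,\dots,C_{i-1}$ (not by $C_{i+1}$), so each is counted $D-1$ times in $\sum_j s(C_j)$, and your claimed drop of $D\cdot s(C_{i+1})$ miscounts the change in the sum. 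The accordion-compression remark you invoke compresses only the accordion region between $C_i$ and $C_{i+1}$; it does not remove this collar, and the paper explicitly cautions that such compressions cannot be employed globally within the induction.

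Second, and more seriously, the right-hand side computation is circular. The assertion that each face of $\Gamma_{i+1}$ is flipped exactly once per enclosing cycle on a geodesic from $\widehat{0}$ to $\widehat{1}$, and that these flips decouple from those carrying the sibling $C_i$, is precisely what the accordion case of Theorem \ref{conj:sum} is supposed to establish; it cannot be assumed here. Your appeal to ``applying Lemma \ref{lem:indhyp} repeatedly'' does not close this, because Lemma \ref{lem:indhyp} is stated for an annulus with a \emph{single} interior cycle, and the accordion situation is exactly the one in which $C_{i-1}$ has two children $C_i$ and $C_{i+1}$ --- the case that lemma does not cover and that this lemma exists to handle. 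You flag this as ``the main obstacle,'' but it is the entire content of the statement. The paper's argument is different and avoids the circularity: it takes $H$ to be the deleted region ($\Gamma_{i+1}$ together with the deleted arc of $C_{i-1}$ and the faces between them), completes it to a genuine annulus $H'$ by adjoining one auxiliary square face $f$ at the two-valent vertex of $C_{i+1}$ on the periphery of $H$, verifies that $H'$ satisfies the Periphery Proposition Lemma \ref{lem:cyclesPropertyK}, and then applies Lemma \ref{lem:indhyp} to this local annulus so that each of its square faces contributes exactly once to the height; removing the auxiliary face $f$ afterwards does not disturb the count.
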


\begin{proof}
Consider Figure \ref{fig:accordionreduction}, and let $H$ be the induced subgraph containing $\Gamma_{i+1}$ and the path on the cycle $C_{i-1}$ that is deleted.  This is \emph{almost} an annulus; to turn this into an annulus $H'$ one need only add a single square face $f$ at the two-valent vertex on the cycle $C_{i+1}$ that is on the periphery of $H$ as in Figure \ref{fig:accordionreductionlemma}.

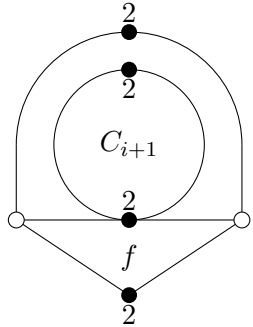
\begin{figure}[h]
\begin{center}
\begin{tikzpicture}

	\draw (1.5,.5) circle (1);

\fill[color=black] (1.5,-.5) circle (3pt) node[above] {$2$};
\fill[color=black] (1.5,1.5) circle (3pt) node[below] {$2$};

\draw (3,.5) arc (0:180:1.5);
\fill[color=black] (1.5,2) circle (3pt) node[above] {$2$};

\draw (0,.5) -- (0,-.5) -- (1.5,-.5) -- (3,-.5) -- (3,.5);

\draw (1.5,.5) node {$C_{i+1}$};

\draw (0,-.5) -- (1.5,-1.5) -- (3,-.5);
\fill[color=black] (1.5,-1.5) circle (3pt) node[below] {$2$};

\draw (1.5,-1) node {$f$};

	\fill[color=white] (0,-.5) circle (3pt);
	\draw (0,-.5) circle (3pt);
	\fill[color=white] (3,-.5) circle (3pt);
	\draw (3,-.5) circle (3pt);
	
\end{tikzpicture}
	\caption{Adding a square face $f$ to the deleted subgraph $H$ of the Accordion Reduction Proposition \ref{prop:accordionreduction} yields an annulus $H'$.}
	\label{fig:accordionreductionlemma}
\end{center}
\end{figure}

Observe that $H'$ has two black two-valent vertices by construction and since the remaining black vertices on the periphery were also on the periphery of $\Gamma_{i-1}$, this annulus $H'$ satisfies the Periphery Proposition Lemma \ref{lem:cyclesPropertyK}.

Thus we can apply Lemma \ref{lem:indhyp} to show that each of the square faces in this annulus contribute exactly once to the height.  It is easy to see that deleting the additional face $f$ does not affect this equality.
\end{proof}

\begin{lemma}
\label{lem:mainpartyhatreduction}
\textbf{Invariance of equality under the Party Hat Reduction Proposition.}
The Party Hat Reduction Proposition \ref{prop:partyhatreduction} does not affect the equality of Theorem \ref{conj:sum}.
\end{lemma}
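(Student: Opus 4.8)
The plan is to mirror the proof of Lemma \ref{lem:mainaccordionreduction}, adapting the single-face completion used there to the cone of the party hat. Referring to Figure \ref{fig:partyhatreduction}, I would let $H$ be the induced subgraph deleted by the Party Hat Reduction Proposition \ref{prop:partyhatreduction}, namely the interior graph $\Gamma_{i+1}$ together with the cone of the party hat and the portion of $C_{i-1}$ removed by the local move. As in the accordion case this region is \emph{almost} an annulus whose inner boundary is $C_{i+1}$; the essential difference is that the party hat meets $C_{i+1}$ at its \emph{two} black two-valent vertices $v_1$ and $v_2$ (rather than one) and pinches at the cutvertex $u_1$ on its way to the puff ball $C_i$.

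First I would complete $H$ to a genuine annulus $H'$ by adjoining the square faces incident with $v_1$ and $v_2$ that the reduction retains (the two faces adjacent to the cone), closing the outer boundary into a single cycle running through $v_1$, $v_2$ and the relevant arc of $C_{i-1}$. I would then check that $H'$ satisfies the Periphery Proposition Lemma \ref{lem:cyclesPropertyK}: the completion creates exactly two black two-valent vertices, while every other black vertex of the periphery of $H'$ already lay on $C_{i-1}$ and is therefore three-valent, exactly as required. With $H'$ an annulus satisfying the Periphery Proposition, Lemma \ref{lem:indhyp} applies and shows that flipping each square face of $H'$ exactly once carries the local matchings $\mu_{i-1}^0$ and $\mu_{i+1}^1$ to $\mu_{i-1}^1$ and $\mu_{i+1}^0$; hence every square of $H'$ contributes precisely once to the height. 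Deleting the adjoined faces then returns $H$, so the reduction of Proposition \ref{prop:partyhatreduction} removes exactly the same faces from $\sum_i s(C_i)$ as from $h$, and the equality of Theorem \ref{conj:sum} is undisturbed.

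The main obstacle is the cutvertex $u_1$ and the adjacent puff ball $C_i$, which are absent from the accordion configuration. Because the cone pinches at $u_1$ before reaching $C_i$, the region $H$ is not literally annular, so the delicate points are to perform the completion to $H'$ without absorbing any face of the interior graph $\Gamma_i$ of the puff ball, and to verify that the two black two-valent vertices produced on the periphery of $H'$ are precisely $v_1$ and $v_2$, so that no spurious two-valent vertex is introduced near $u_1$. Here I would invoke the hypothesis that $C_{i+1}$ is connected to no cycle other than $C_i$, together with the Stacking Proposition \ref{prop:2to2}, to guarantee that the contribution of $C_i$ to the height is independent of $H'$ and is accounted for separately by the induction on the number of remaining cycles.
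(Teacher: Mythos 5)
Your proposal matches the paper's proof essentially step for step: both take $H$ to be the deleted subgraph containing $\Gamma_{i+1}$ and the removed arc of $C_{i-1}$, complete this \emph{almost}-annulus to an annulus $H'$ by adjoining square faces at the two black two-valent vertices of $C_{i+1}$, verify that $H'$ satisfies the Periphery Proposition Lemma \ref{lem:cyclesPropertyK}, invoke Lemma \ref{lem:indhyp} so that each face flips exactly once, and observe that removing the adjoined faces does not disturb the count. Your extra remarks about the cutvertex $u_1$ and the separate accounting of the puff ball $C_i$ are a reasonable elaboration of a point the paper leaves implicit, not a departure from its argument.
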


\begin{proof}
As in the proof of Lemma \ref{lem:mainaccordionreduction}, consider Figure \ref{fig:partyhatreduction}, and let $H$ be the induced subgraph containing $\Gamma_{i+1}$ and the path on the cycle $C_{i-1}$ that is deleted.  This again is \emph{almost} an annulus; to turn this into an annulus $H'$ one need only add some square faces $f_1,\ldots,f_n$ at the two black two-valent vertices on the cycle $C_{i+1}$ that is on the periphery of $H$ as in Figure \ref{fig:partyhatreductionlemma}.

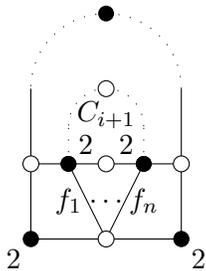
\begin{figure}[h]
\begin{center}
\begin{tikzpicture}

\draw[] (.5,-4) -- (2.5,-4);

\draw[] (.5,-3) -- (.5,-2);
\draw[] (2.5,-2) -- (2.5,-3);
\draw[loosely dotted] (2.5,-2) arc (0:180:1);

\draw (1.5,-2.35) node {$C_{i+1}$};

\draw (.5,-4) -- (.5,-3) -- (1,-3) -- (1.5,-4) -- (2,-3) -- (2.5,-3) -- (2.5,-4);

\draw[] (1,-3) -- (2,-3);
\draw[loosely dotted] (2,-2.5) arc (0:180:.5);
\draw[loosely dotted] (2,-3) -- (2,-2.5);
\draw[loosely dotted] (1,-3) -- (1,-2.5);

\foreach \x/ \y in {.5/-3, 1.5/-4, 1.5/-3, 1.5/-2, 2.5/-3}
	{
	\fill[color=white] (\x,\y) circle (3pt);
	\draw (\x,\y) circle (3pt);
	}

\foreach \x/ \y in {.5/-4, 1/-3, 1.5/-1, 2/-3, 2.5/-4}
	{
	\fill[color=black] (\x,\y) circle (3pt);
	}

\draw (1,-3.5) node {$f_1$};
\draw (1.5,-3.5) node {$\ldots$};
\draw (2,-3.5) node {$f_n$};
\fill[color=black] (.5,-4) circle (3pt) node[below left] {$2$};
\fill[color=black] (1,-3) circle (3pt) node[above right] {$2$};
\fill[color=black] (2,-3) circle (3pt) node[above left] {$2$};
\fill[color=black] (2.5,-4) circle (3pt) node[below right] {$2$};

\end{tikzpicture}
	\caption{Adding square faces $f_1,\ldots,f_n$ to the deleted subgraph $H$ of the Party Hat Reduction Proposition \ref{prop:partyhatreduction} yields an annulus $H'$.}
	\label{fig:partyhatreductionlemma}
\end{center}
\end{figure}

Observe again that $H'$ has two black two-valent vertices by construction and since the remaining black vertices on the periphery were also on $C_{i-1}$ or $C_{i+1}$, this annulus $H'$ satisfies the Periphery Proposition Lemma \ref{lem:cyclesPropertyK}.

Thus as above we can apply Lemma \ref{lem:indhyp} to show that each of the square faces in this annulus contribute exactly once to the height.  It is easy to see that deleting the additional faces $f_1,\ldots,f_n$ does not affect this equality.
\end{proof}

Lastly we show that we can remove leaves to arrive at any of the cases above.


\begin{lemma}
\label{lem:mainleafreduction}
\textbf{Invariance of equality under the Leaf Reduction Proposition.}
Equation \ref{eq:sum} is preserved by the Leaf Reduction Proposition \ref{prop:leafreduction}.
\end{lemma}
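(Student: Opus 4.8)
The plan is to localize the move and then show that it changes the two sides of Equation \ref{eq:sum} by the same amount. By the Finite Leaf Lemma \ref{lem:finiteleaf} an alternating path of leaves is finite, so it is enough to analyze a single application of the local operation of Figure \ref{fig:leafreduction} and iterate along the path, exactly as the Leaf Reduction Proposition \ref{prop:leafreduction} was itself built up from the successive black vertices $v_2,v_3$ and $v_4,v_5$. Since Proposition \ref{prop:leafreduction} guarantees that the move does not alter the cycles $C_i$ or the nesting of the interior graphs $\Gamma_i$, the only quantities that can change are the height $h$ and the internal square-face counts $s(C_i)$ of a single interior graph.

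For the left-hand side I would read the change in $\sum_i s(C_i)$ directly off Figure \ref{fig:leafreduction}: deleting the black vertex on the leaf together with its white partner destroys precisely the square faces incident to that black vertex and replaces them with fewer squares, so $\sum_i s(C_i)$ changes by a fixed amount determined by the local picture. The cleanest way to certify this count, and to avoid recomputing faces by hand, is to recognize the faces incident to the leaf as a simply connected region and invoke Lemma \ref{lem:mainunstackingreduction}, so that the bookkeeping is inherited from the already-verified Simply Connected Region Reduction rather than redone.

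For the right-hand side I would track the change in $h$ using Theorem \ref{thm:clocked}: the removed edges are leaves, hence lie in $\widehat{0}\cap\widehat{1}$ and outside the symmetric difference $\widehat{0}\triangle\widehat{1}=\bigcup_i C_i$. This is exactly the situation already analyzed in the $|P_{inner}|=3$ case of the base case Lemma \ref{lem:basecase}, where a leaf sits fixed while the surrounding squares are flipped; reusing that computation, and (where several concentric levels are involved) the annulus flip of Lemma \ref{lem:indhyp}, should put the flips eliminated by the move in bijection with the square faces destroyed on the left-hand side. Hence $h$ changes by the same fixed amount, and Equation \ref{eq:sum} is preserved.

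The main obstacle is this last matching. A leaf edge belongs to both $\widehat{0}$ and $\widehat{1}$ but need not belong to every intermediate matching along a geodesic, so one cannot simply declare the leaf-adjacent squares inert; they may genuinely be flipped, with the leaf edge leaving and re-entering the matching, and thus contribute to $h$. The delicate point is therefore to confirm that every such contributed flip is removed by the move and that none of the destroyed squares was secretly counted with higher multiplicity because it lay in the interior of a deeper $\Gamma_j$. I expect this to be handled most safely by realizing the leaf move as an instance of Lemma \ref{lem:mainunstackingreduction} applied to the faces at the leaf's black vertex, so that the equality is transported from an already-proved reduction rather than argued afresh.
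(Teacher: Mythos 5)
Your overall strategy --- show that the Leaf Reduction changes $\sum_i s(C_i)$ and $h$ by the same amount --- is the right one, and you correctly isolate the genuine difficulty: the leaf edge lies in $\widehat{0}\cap\widehat{1}$ but leaves and re-enters the matching along a geodesic, so the faces around the leaf are genuinely flipped and do contribute to $h$. The gap is in your proposed resolution. You want to transport the count through Lemma \ref{lem:mainunstackingreduction}, i.e.\ to realize the leaf move as an instance of the Simply Connected Region Reduction applied to the faces at the leaf's black vertex. But that lemma (via Proposition \ref{prop:unstacking}) applies only to an induced subgraph all of whose vertices lie on $C_{i-1}$ and all but \emph{one} of whose periphery edges lie on $C_{i-1}$. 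The leaf configuration violates both hypotheses: the black vertex of the leaf is a four-valent \emph{interior} vertex not lying on any cycle $C_j$ (it was pruned in Theorem \ref{prop:concentriccircles}), its white partner likewise, and the move is a surgery that deletes this vertex pair and merges faces rather than an unstacking of squares hanging off a cycle. So the equality cannot be ``inherited'' from Lemma \ref{lem:mainunstackingreduction}; the bookkeeping you hoped to avoid is exactly the content of the lemma.

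The paper's proof does that bookkeeping directly. It takes the induced subgraph $H$ of Figure \ref{fig:leafreductionlemma}, with faces $f_1,\dots,f_n$ around the leaf and $f_{n+1},f_{n+2}$ below it, and shows that in passing from $\mu^0_{\partial H}$ to $\mu^1_{\partial H}$ each face of $H$ is flipped \emph{exactly once} in a forced order: because the leaf edge belongs to $\widehat{0}$, the only available first flip is $f_1$ (which removes the leaf edge from the matching), then $f_2,\dots,f_{n-1}$ along the top, $f_{n+1},f_{n+2}$ along the bottom, and finally $f_n$ (which restores the leaf edge). That gives $n+2$ flips before the reduction and, by the same argument on $H'$, exactly $n$ after; and since all of these faces lie in $\Gamma_{i-1}$ but in no deeper nested $\Gamma_j$ --- the multiplicity concern you raised --- each is counted once on the left-hand side, so both sides of Equation \ref{eq:sum} drop by exactly $2$. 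If you want to keep your outline, you should replace the appeal to Lemma \ref{lem:mainunstackingreduction} with this forced-flip-sequence argument (or some equivalent direct count on $H$); as written, the central step does not go through.
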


\begin{proof}
Suppose there is a leaf in $\Gamma_{i-1}$ as in Figure \ref{fig:leafreductionlemma}, with square faces labelled $f_1$ to $f_n$ along the upper part of the figure leaving the two faces $f_{n+1}$ and $f_{n+2}$ on the lower part.  Label the boundary $\partial H$ of this induced subgraph $H$ in a counterclockwise manner $e_1\cup P_{upper}\cup e_n\cup P_{lower}$ starting at the right.  We suppose that faces outisde of the induced subgraph $H$ act as we need so that the edges in the figure can be considered in perfect matchings.


We first show that each square face in the induced subgraph $H$ must be flipped exactly once to transfer the edges from $\mu^0_{\partial H}$ to $\mu^1_{\partial H}$ where $\widehat{0}\leq \mu^0_{\partial H} < \mu^1_{\partial H} \leq \widehat{1}$ with partial order given by Kauffman's lattice.

Observe that the leaf belongs to the clocked state $\widehat{0}$ (and so no other edges on these vertices can belong to $\widehat{0}$); thus the only possible flip move here is $f_1$, since we may assume that $e_1$ is in $\mu^0_{\partial H}$.  
Then the faces $f_2,\ldots, f_{n-1}$ can be flipped successively along the upper part and the faces $f_{n+1}$ and $f_{n+2}$ can be flipped successively along the lower part.  Finally $f_n$ can be flipped, and the matching on the boundary is indeed $\mu^1_{\partial H}$.

Next see that after the reduction move, each square in the new induced subgraph $H'$ must be flipped exactly once, $f_1,\ldots,f_n$ successively, to transfer the edges from $\mu^0_{\partial H'}$ to $\mu^1_{\partial H'}$.

Lastly observe that all of these square faces belong to the same interior graph $\Gamma_{i-1}$ but not to any smaller nested interior graphs $\Gamma_i,\Gamma_{i+1},\ldots$ so they are counted exactly once per cycle.


If there are several leaves, one of these reduction moves may be performed at a time.
\end{proof}

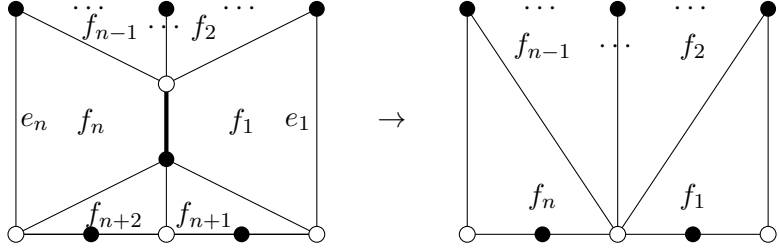
\begin{figure}[h]
\begin{center}
\begin{tikzpicture}

\draw[ultra thick] (2,1) -- (2,2);
\draw (0,0) -- (4,0) -- (4,3) -- (2,2) -- (0,3) -- (0,0) -- (2,1) -- (4,0) -- cycle;
\draw (2,0) -- (2,3);
\draw (5,1.5) node {$\rightarrow$};
\draw (6,0) -- (10,0) -- (10,3) -- (8,0) -- (6,3) -- cycle;
\draw (8,0) -- (8,3);
\foreach \x/ \y in {0/0, 2/0, 4/0, 2/2, 6/0, 8/0, 10/0}
	{			\fill[color=white] (\x,\y) circle (3pt);
				\draw (\x,\y) circle (3pt);											}
\foreach \x/ \y in {1/0, 3/0, 2/1, 0/3, 2/3, 4/3, 7/0, 9/0, 6/3, 8/3, 10/3}
	{			\fill[color=black] (\x,\y) circle (3pt);				}
\draw (1,3) node {$\cdots$};
\draw (3,3) node {$\cdots$};
\draw (7,3) node {$\cdots$};
\draw (9,3) node {$\cdots$};

\draw (3.75,1.5) node {$e_1$};
\draw (3,1.5) node {$f_1$};
\draw (2.5,2.75) node {$f_2$};
\draw (2,2.75) node {$\cdots$};
\draw (1.25,2.75) node {$f_{n-1}$};
\draw (1,1.5) node {$f_n$};
\draw (.25,1.5) node {$e_n$};
\draw (2.5,.25) node {$f_{n+1}$};
\draw (1.3,.25) node {$f_{n+2}$};

\draw (9,.5) node {$f_1$};
\draw (9,2.5) node {$f_2$};
\draw (8,2.5) node {$\cdots$};
\draw (7,2.5) node {$f_{n-1}$};
\draw (7,.5) node {$f_n$};

\end{tikzpicture}
	\caption{The Leaf Reduction Proposition \ref{prop:leafreduction} already has a simply connected region $H$.}
	\label{fig:leafreductionlemma}
\end{center}
\end{figure}


This completes the proof of the Main Theorem \ref{conj:sum}.
\end{proof}

\section{Examples}
\label{sec:examples}

\begin{example}
\label{ex:Abe}
Abe (in \cite{Abe}) considers a six crossing knot universe as a running example, displaying its clock lattice (with one edge missing) in Figure 7.  We translate this example to the graph of perfect matchings in Figure \ref{fig:ex:Abe} below.
\end{example}

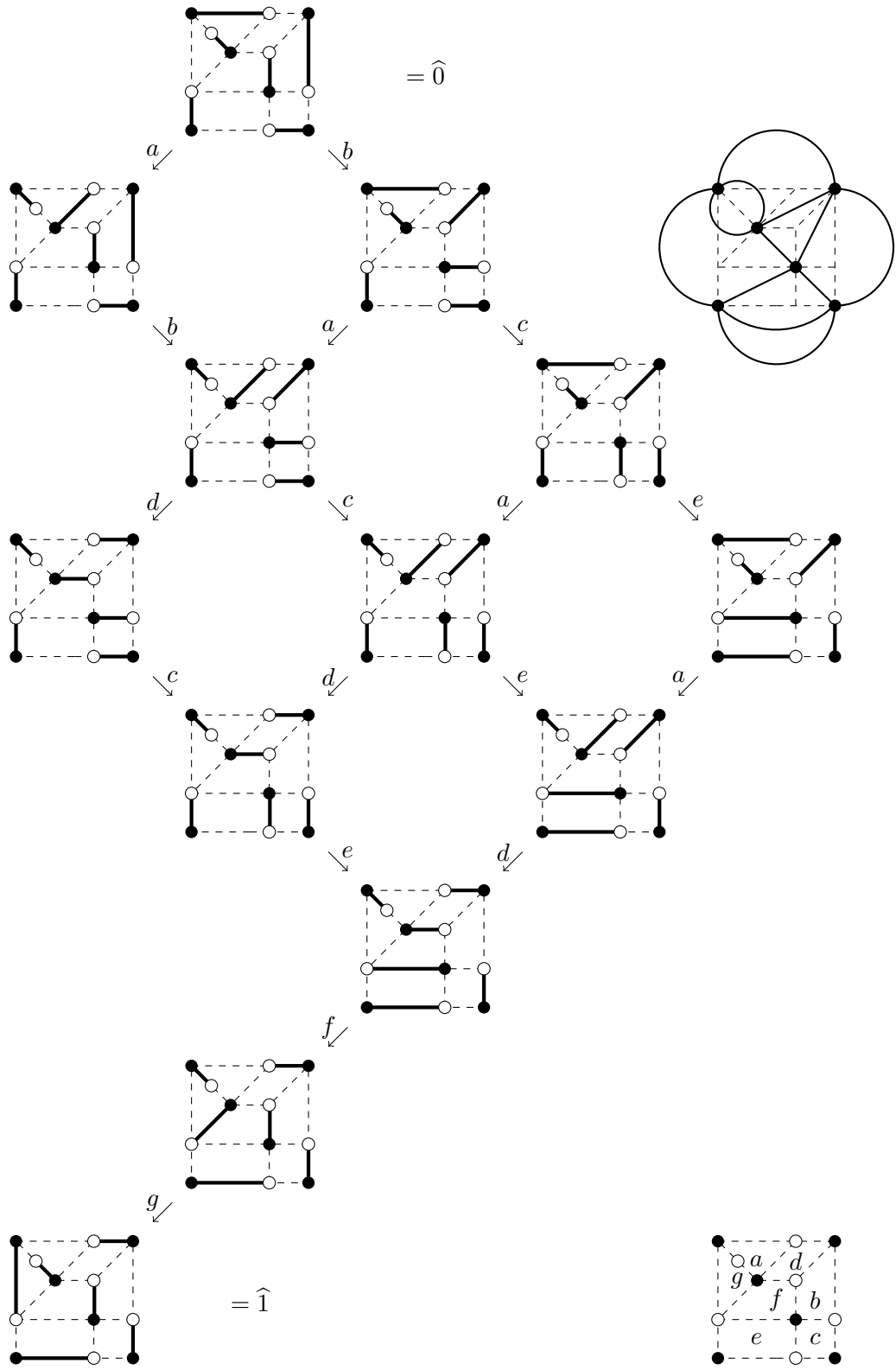
\begin{figure}
\begin{center}
\begin{tikzpicture}

\begin{scope}[scale=.9]


\draw[ultra thick] (-7,1) -- (-5.66,1);
\draw[shift={(3,3)}, ultra thick] (-7,1) -- (-5.66,1);
\draw[shift={(6,6)}, ultra thick] (-7,1) -- (-5.66,1);
\draw[shift={(9,9)}, ultra thick] (-7,1) -- (-5.66,1);
\draw[shift={(12,12)}, ultra thick] (-7,1) -- (-5.66,1);

\draw[shift={(0,12)}, ultra thick] (-5,1) -- (-5.66,1);
\draw[shift={(3,15)}, ultra thick] (-5,1) -- (-5.66,1);
\draw[shift={(6,18)}, ultra thick] (-5,1) -- (-5.66,1);
\draw[shift={(0,18)}, ultra thick] (-5,1) -- (-5.66,1);
\draw[shift={(3,21)}, ultra thick] (-5,1) -- (-5.66,1);

\draw[ultra thick] (-5,1) -- (-5,1.66);
\draw[shift={(3,3)}, ultra thick] (-5,1) -- (-5,1.66);
\draw[shift={(6,6)}, ultra thick] (-5,1) -- (-5,1.66);
\draw[shift={(9,9)}, ultra thick] (-5,1) -- (-5,1.66);
\draw[shift={(12,12)}, ultra thick] (-5,1) -- (-5,1.66);
\draw[shift={(3,9)}, ultra thick] (-5,1) -- (-5,1.66);
\draw[shift={(6,12)}, ultra thick] (-5,1) -- (-5,1.66);
\draw[shift={(9,15)}, ultra thick] (-5,1) -- (-5,1.66);

\draw[shift={(3,9)}, ultra thick] (-5.66,1) -- (-5.66,1.66);
\draw[shift={(6,12)}, ultra thick] (-5.66,1) -- (-5.66,1.66);
\draw[shift={(9,15)}, ultra thick] (-5.66,1) -- (-5.66,1.66);

\draw[shift={(3,9)}, ultra thick] (-7,1) -- (-7,1.66);
\draw[shift={(6,12)}, ultra thick] (-7,1) -- (-7,1.66);
\draw[shift={(9,15)}, ultra thick] (-7,1) -- (-7,1.66);
\draw[shift={(0,12)}, ultra thick] (-7,1) -- (-7,1.66);
\draw[shift={(3,15)}, ultra thick] (-7,1) -- (-7,1.66);
\draw[shift={(6,18)}, ultra thick] (-7,1) -- (-7,1.66);
\draw[shift={(0,18)}, ultra thick] (-7,1) -- (-7,1.66);
\draw[shift={(3,21)}, ultra thick] (-7,1) -- (-7,1.66);

\draw[shift={(6,6)}, ultra thick] (-7,1.66) -- (-5.66,1.66);
\draw[shift={(9,9)}, ultra thick] (-7,1.66) -- (-5.66,1.66);
\draw[shift={(12,12)}, ultra thick] (-7,1.66) -- (-5.66,1.66);

\draw[shift={(0,12)}, ultra thick] (-5,1.66) -- (-5.66,1.66);
\draw[shift={(3,15)}, ultra thick] (-5,1.66) -- (-5.66,1.66);
\draw[shift={(6,18)}, ultra thick] (-5,1.66) -- (-5.66,1.66);

\draw[ultra thick] (-7,1.66) -- (-7,3);

\draw[shift={(3,3)}, ultra thick] (-7,1.66) -- (-6.33,2.33);

\draw[ultra thick] (-5.66,1.66) -- (-5.66,2.33);
\draw[shift={(3,3)}, ultra thick] (-5.66,1.66) -- (-5.66,2.33);
\draw[shift={(0,18)}, ultra thick] (-5.66,1.66) -- (-5.66,2.33);
\draw[shift={(3,21)}, ultra thick] (-5.66,1.66) -- (-5.66,2.33);

\draw[shift={(0,18)}, ultra thick] (-5,1.66) -- (-5,3);
\draw[shift={(3,21)}, ultra thick] (-5,1.66) -- (-5,3);

\draw[shift={(6,6)}, ultra thick] (-6.33,2.33) -- (-5.66,2.33);
\draw[shift={(3,9)}, ultra thick] (-6.33,2.33) -- (-5.66,2.33);
\draw[shift={(0,12)}, ultra thick] (-6.33,2.33) -- (-5.66,2.33);

\draw[ultra thick] (-6.33,2.33) -- (-6.66,2.66);
\draw[shift={(12,12)}, ultra thick] (-6.33,2.33) -- (-6.66,2.66);
\draw[shift={(9,15)}, ultra thick] (-6.33,2.33) -- (-6.66,2.66);
\draw[shift={(6,18)}, ultra thick] (-6.33,2.33) -- (-6.66,2.66);
\draw[shift={(3,21)}, ultra thick] (-6.33,2.33) -- (-6.66,2.66);

\draw[shift={(3,3)}, ultra thick] (-7,3) -- (-6.66,2.66);
\draw[shift={(6,6)}, ultra thick] (-7,3) -- (-6.66,2.66);
\draw[shift={(9,9)}, ultra thick] (-7,3) -- (-6.66,2.66);
\draw[shift={(3,9)}, ultra thick] (-7,3) -- (-6.66,2.66);
\draw[shift={(6,12)}, ultra thick] (-7,3) -- (-6.66,2.66);
\draw[shift={(0,12)}, ultra thick] (-7,3) -- (-6.66,2.66);
\draw[shift={(3,15)}, ultra thick] (-7,3) -- (-6.66,2.66);
\draw[shift={(0,18)}, ultra thick] (-7,3) -- (-6.66,2.66);

\draw[shift={(9,9)}, ultra thick] (-5.66,3) -- (-6.33,2.33);
\draw[shift={(6,12)}, ultra thick] (-5.66,3) -- (-6.33,2.33);
\draw[shift={(3,15)}, ultra thick] (-5.66,3) -- (-6.33,2.33);
\draw[shift={(0,18)}, ultra thick] (-5.66,3) -- (-6.33,2.33);

\draw[shift={(9,9)}, ultra thick] (-5.66,2.33) -- (-5,3);
\draw[shift={(12,12)}, ultra thick] (-5.66,2.33) -- (-5,3);
\draw[shift={(6,12)}, ultra thick] (-5.66,2.33) -- (-5,3);
\draw[shift={(9,15)}, ultra thick] (-5.66,2.33) -- (-5,3);
\draw[shift={(3,15)}, ultra thick] (-5.66,2.33) -- (-5,3);
\draw[shift={(6,18)}, ultra thick] (-5.66,2.33) -- (-5,3);

\draw[shift={(12,12)}, ultra thick] (-7,3) -- (-5.66,3);
\draw[shift={(9,15)}, ultra thick] (-7,3) -- (-5.66,3);
\draw[shift={(6,18)}, ultra thick] (-7,3) -- (-5.66,3);
\draw[shift={(3,21)}, ultra thick] (-7,3) -- (-5.66,3);

\draw[ultra thick] (-5,3) -- (-5.66,3);
\draw[shift={(3,3)}, ultra thick] (-5,3) -- (-5.66,3);
\draw[shift={(6,6)}, ultra thick] (-5,3) -- (-5.66,3);
\draw[shift={(3,9)}, ultra thick] (-5,3) -- (-5.66,3);
\draw[shift={(0,12)}, ultra thick] (-5,3) -- (-5.66,3);

\draw[<-] (-4.66,3.33) -- (-4.33,3.66);
	\draw (-4.66,3.66) node {$g$};
\draw[<-] (-1.66,6.33) -- (-1.33,6.66);
	\draw (-1.66,6.66) node {$f$};
\draw[<-] (-1.33,9.33) -- (-1.66,9.66);
	\draw (-1.33,9.66) node {$e$};
\draw[<-] (1.33,9.33) -- (1.66,9.66);
	\draw (1.33,9.66) node {$d$};

\draw[<-] (-4.33,12.33) -- (-4.66,12.66);
	\draw (-4.33,12.66) node {$c$};
\draw[<-] (-1.66,12.33) -- (-1.33,12.66);
	\draw (-1.66,12.66) node {$d$};
\draw[<-] (1.66,12.33) -- (1.33,12.66);
	\draw (1.66,12.66) node {$e$};
\draw[<-] (4.33,12.33) -- (4.66,12.66);
	\draw (4.33,12.66) node {$a$};

\draw[<-] (-4.66,15.33) -- (-4.33,15.66);
	\draw (-4.66,15.66) node {$d$};
\draw[<-] (-1.33,15.33) -- (-1.66,15.66);
	\draw (-1.33,15.66) node {$c$};
\draw[<-] (1.33,15.33) -- (1.66,15.66);
	\draw (1.33,15.66) node {$a$};
\draw[<-] (4.66,15.33) -- (4.33,15.66);
	\draw (4.66,15.66) node {$e$};

\draw[<-] (-4.33,18.33) -- (-4.66,18.66);
	\draw (-4.33,18.66) node {$b$};
\draw[<-] (-1.66,18.33) -- (-1.33,18.66);
	\draw (-1.66,18.66) node {$a$};
\draw[<-] (1.66,18.33) -- (1.33,18.66);
	\draw (1.66,18.66) node {$c$};

\draw[<-] (-1.33,21.33) -- (-1.66,21.66);
	\draw (-1.33,21.66) node {$b$};
\draw[<-] (-4.66,21.33) -- (-4.33,21.66);
	\draw (-4.66,21.66) node {$a$};

\draw (-3,2) node {$=\widehat{1}$};
\draw (0,23) node {$=\widehat{0}$};

\draw (5.66,1.33) node {$e$};
\draw (6.66,1.33) node {$c$};
\draw (6,2) node {$f$};
\draw (6.66,2) node {$b$};
\draw (5.33,2.33) node {$g$};
\draw (5.66,2.66) node {$a$};
\draw (6.33,2.66) node {$d$};

\foreach \x/ \y in {-6/1, -3/4, 0/7, -3/10, 3/10, -6/13, 0/13, 6/13, -3/16, 3/16, 0/19, -6/19, -3/22, 6/1}
	{
	\draw[dashed] (\x,\y) -- (\x+1,\y) -- (\x+1,\y+2) -- (\x+.33,\y+1.33) -- (\x-.33,\y+1.33) -- (\x-1,\y+2) -- (\x-1,\y) -- cycle;
	\draw[dashed] (\x+.33,\y) -- (\x+.33,\y+1.33);
	\draw[dashed] (\x+1,\y+.66) -- (\x-1,\y+.66) -- (\x+.33,\y+2);
	\draw[dashed] (\x-1,\y+2) -- (\x+1,\y+2);

	\foreach \w/ \z in {\x+.33/\y, \x-1/\y+.66, \x+1/\y+.66, \x+.33/\y+1.33, \x-.66/\y+1.66, \x+.33/\y+2}
		{	\fill[color=white] (\w,\z) circle (3pt);
			\draw (\w,\z) circle (3pt);
		}
	\foreach \w/ \z in {\x-1/\y, \x+1/\y, \x+.33/\y+.66, \x-.33/\y+1.33, \x-1/\y+2, \x+1/\y+2}
		{	\fill[color=black] (\w,\z) circle (3pt);
		}
	}

\foreach \x/ \y in {6/19}
	{
	\draw[dashed] (\x,\y) -- (\x+1,\y) -- (\x+1,\y+2) -- (\x+.33,\y+1.33) -- (\x-.33,\y+1.33) -- (\x-1,\y+2) -- (\x-1,\y) -- cycle;
	\draw[dashed] (\x+.33,\y) -- (\x+.33,\y+1.33);
	\draw[dashed] (\x+1,\y+.66) -- (\x-1,\y+.66) -- (\x+.33,\y+2);
	\draw[dashed] (\x-1,\y+2) -- (\x+1,\y+2);

	\foreach \w/ \z in {\x-1/\y, \x+1/\y, \x+.33/\y+.66, \x-.33/\y+1.33, \x-1/\y+2, \x+1/\y+2}
		{	\fill[color=black] (\w,\z) circle (3pt);
		}
	}

\begin{scope}[shift={(0,14.5)}]

\draw[thick] (5,4.5)--(6.33,5.16)--(7,4.5);
\draw[thick] (7,6.5)--(6.33,5.16)--(5.66,5.83)--(7,6.5);


\draw[thick] (5,4.5) arc (225:315:1.4);
\draw[thick] (5,4.5) arc (180:360:1);
\draw[thick] (5,6.5) arc (180:0:1);
\draw[thick] (5,4.5) arc (270:90:1);
\draw[thick] (7,4.5) arc (270:450:1);
\draw[thick] (5,6.5) arc (135:495:.462);

\end{scope}

\end{scope}

\end{tikzpicture}
	\caption{An example from Abe.}
	\label{fig:ex:Abe}
\end{center}
\end{figure}

\begin{example}
\label{ex:split}
\textbf{Split links and other non-prime-like diagrams.}  Given such a diagram, one can use a trick similar to one in \cite{VIR}, pulling some strand from one side over or under a strand on the other side.  Repeat several times if necessary to produce a prime-like diagram like those considered above.
\end{example}


\begin{example}
\label{ex:10-44}
\textbf{The number of cycles is not a knot invariant.}  Consider for example the two projections of the knot $10_{44}$ found in Knot Info \cite{knotinfo} and the Knot Atlas \cite{knotatlas}.  Taking ``similar'' starred regions, the balanced overlaid Tait graph of the first has two cycles but that of the second has only one.  The graph of perfect matchings of the first has height $11+1=12$ and that of the second has height $9$.
\end{example}

However, the number of cycles may be used to obtain an upper bound for the bridge number of the knot if the following conjecture is true.

\begin{conjecture}
\label{conj:bridge}
The number of cycles is related to the bridge number of the universe.
\end{conjecture}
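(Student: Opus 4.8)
The plan is first to make the conjecture precise as a one-sided bound: I would aim to show that the balanced overlaid Tait graph $\Gamma$ of a fixed prime-like diagram $D$ determines a height (Morse) function on the underlying knot whose number of local maxima is controlled by the cycles $\{C_i\}$, so that $br(K)$ is bounded above by this count. Since the bridge index is the minimum over all diagrams of the number of local maxima, any single diagram-dependent height function produces an upper bound for $br(K)$; the content of the conjecture is therefore to identify which feature of $\{C_i\}$ realizes such a function economically.

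The starting point is the nested structure from Theorem \ref{prop:concentriccircles}: the interior graphs form a chain $\Gamma_1 \supseteq \Gamma_2 \supseteq \cdots$, branching in a tree-like fashion exactly where the deletion of a periphery disconnects (accordions, Proposition \ref{prop:accordion}) or where a cutvertex breaks (party hats, Proposition \ref{prop:2b2break}). I would use this concentric foliation to define a radial height function on the plane of the diagram, declaring height to decrease as one passes inward from $C_{i-1}$ to $C_i$, and then lift it to the knot via the reconstruction of the universe in Proposition \ref{prop:UniqueUniverse}. Under this function the turning points of strands are forced to occur at the two black two-valent vertices guaranteed on each cycle by Theorem \ref{prop:concentriccircles}, so the local extrema are pinned to these distinguished crossings. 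The key structural input is then Theorem \ref{thm:blackvertices}: a two-valent vertex on $C_i$ either is inherited by stacking from $C_{i-1}$, in which case a single strand merely passes through without creating a new maximum, or is created in a pair by an accordion or party hat, in which case a genuine new turning-around of the knot occurs. Counting only the second kind of event should produce the number of local maxima, giving $br(K) \le (\#\text{branchings}) + 1$.

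To pin down the exact count I would proceed by induction along the nesting, using the reduction moves already proved to preserve the relevant data: the Accordion Reduction Proposition \ref{prop:accordionreduction} and the Party Hat Reduction Proposition \ref{prop:partyhatreduction} each remove one branch of the cycle tree, and I would check that each such move decreases the local-maximum count by exactly one, mirroring the way Lemmas \ref{lem:mainaccordionreduction} and \ref{lem:mainpartyhatreduction} were used to control the height $h$ in Theorem \ref{conj:sum}. The base case is a single cycle: here $\Gamma$ is ``thin,'' the two two-valent vertices are the two turning points of a single dive, and the diagram should realize a two-bridge presentation, consistent with Abe's equality $p(K)=c(K)$ for two-bridge knots in \cite{Abe}. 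Where a discrete-Morse formulation is cleaner, Proposition \ref{prop:DMT} lets me replace the plane height function by a discrete Morse function on the $2$-complex whose critical $2$-cells play the role of maxima, so the combinatorial count can be carried out entirely within $\Gamma$.

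The hard part will be the bookkeeping that converts the local picture at a single two-valent vertex into a \emph{global} count of local maxima: a strand inherited by stacking must be shown never to create an extraneous maximum, and the branchings from distinct accordions and party hats must be shown to contribute independently rather than merging into shared turning points. A second genuine obstacle is that $br(K)$ is an infimum over all diagrams while $\{C_i\}$ depends on both $D$ and the choice of adjacent stars; consequently the best one can hope to \emph{prove} in this direction is an upper bound valid for each diagram, and establishing that some diagram attains equality (so that the invariant $br(K)$ is actually computed, not merely bounded) would require an independent minimization argument, presumably building the optimal diagram from a minimal-bridge Morse presentation and reading its cycles back off through Proposition \ref{prop:UniqueUniverse}.
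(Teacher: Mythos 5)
There is no proof in the paper to compare against: the statement you are proving is labelled a \emph{conjecture}, stated deliberately vaguely (``is related to''), and the authors offer only circumstantial support for it --- Abe's results on the clock number \cite{Abe}, and the grid-graph computation of Subsection \ref{subsec:harmonic} together with the Koseleff--Pecker result behind Theorem \ref{thm:cheb}, which ties Chebyshev projections to bridge number. Your submission is likewise not a proof but a research program, and you concede as much: the two steps you flag as ``the hard part'' (the global bookkeeping converting local two-valent-vertex pictures into a count of local maxima, and the diagram-dependence of $\{C_i\}$ versus the infimum defining $br(K)$) are precisely the content of the conjecture, not peripheral details. Note also that your claim that turning points are ``pinned'' at the two-valent black vertices is not automatic: local extrema of your radial function occur where a strand is tangent to the concentric foliation, which generically happens along edges of the universe, not at crossings, so an isotopy/monotonicity argument between consecutive cycles would be needed before any counting can start.

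Beyond incompleteness, one step of your plan is refuted by the paper's own evidence. You propose that a two-valent vertex inherited by stacking (Proposition \ref{prop:2to2}) ``merely passes through without creating a new maximum,'' and that only accordion and party-hat branchings contribute, yielding $br(K)\le(\#\text{branchings})+1$. But for the harmonic knots of Subsection \ref{subsec:harmonic} the graph $\Gamma$ is a grid graph $\Gamma_{m,n}$, whose cycles $\{C_i\}$ are purely nested rectangles produced entirely by stacking --- there are no accordions or party hats at all --- so your count would bound the bridge number of every such knot by a constant. This is false: by \cite{KosPec3} a knot of bridge index $N$ has a Chebyshev projection with $x_1=2N-1$, hence $\Gamma=\Gamma_{2N-3,\,n}$ with $N-1$ nested cycles, and the bridge number grows linearly with the number of nested cycles (indeed this instance, $br=\#\{C_i\}+1$, is exactly the evidence the authors cite for Conjecture \ref{conj:bridge}). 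So the correct quantity, if the conjecture holds, must count \emph{all} cycles, and your dichotomy from Theorem \ref{thm:blackvertices} --- stacking contributes nothing, branching contributes one --- is the wrong reading of that theorem for this purpose; your own base case (one cycle gives a two-bridge presentation) is already inconsistent with the stacking-is-free claim, since two nested cycles would then still give $br\le 2$.
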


This is supported by notions in the next subsection.


\subsection{Application to grid graphs and harmonic knots}
\label{subsec:harmonic}

Let $\Gamma_{m,n}$ be the grid graph of $m\times n$ squares with $m$, $n$ both odd.  This ensures that the graph has equal-sized vertex sets.  Let us say that $m\leq n$, so that $\min=\min\{m,n\}=m$.  Then $(1/2)(\min-1)=\frac{m-1}{2}$, $(1/2)(\min-1)+1=\frac{m+1}{2}$, and $2(1/2)(\min-1)+1=m$.

\begin{corollary}
The height of the clock lattice for the square grid graph $\Gamma_{m,m}$ is the $m$-th tetrahedral number $(1/6)m(m+1)(m+2)$.
\end{corollary}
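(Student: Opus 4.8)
The plan is to invoke the Main Theorem \ref{conj:sum}, which reduces the height $h$ to the sum $\sum_i s(C_i)$ of the numbers of square faces inside the concentric interior graphs $\Gamma_i$ furnished by Theorem \ref{prop:concentriccircles}. The whole task then splits into three steps: (i) identify the cycles $C_i$ for the square grid, (ii) count the square faces inside each $\Gamma_i$, and (iii) perform the summation.

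First I would show that for $\Gamma_{m,m}$ with $m$ odd the cycles $C_i$ are exactly the concentric square boundaries. Write $m=2k+1$; the grid has $(m+1)\times(m+1)$ vertices, and its bipartite two-colouring places black vertices at two opposite corners (the two two-valent black vertices demanded by the Periphery Proposition \ref{prop:PropertyK}) and white vertices at the other two. The outermost cycle $C_1$ is the boundary of the full grid, and deleting all edges incident to the vertices of $C_1$ — as in the Construction of Theorem \ref{prop:concentriccircles} — strips off the outer ring of vertices and leaves precisely an $(m-2)\times(m-2)$ grid. By the symmetry of the grid this smaller graph has no leaves and no cutvertices, hence no accordions or party hats, so the construction does not branch and produces a single next cycle $C_2$ equal to its boundary. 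I would package this as an induction on $k$: each interior graph $\Gamma_i$ is itself an odd square grid $\Gamma_{m-2(i-1),\,m-2(i-1)}$, the construction never splits, and the process terminates at $i=k+1=(m+1)/2$ with the single central square $\Gamma_{(m+1)/2}$.

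Next I would count faces. Since $\Gamma_i$ is the $(m-2(i-1))\times(m-2(i-1))$ grid, it contains $s(C_i)=(m-2(i-1))^2$ square faces, consistent with the remark that $s(C_1)$ equals the original number of square faces of $\Gamma$. Summing via Theorem \ref{conj:sum} gives
\[
h=\sum_{i=1}^{(m+1)/2}\bigl(m-2(i-1)\bigr)^2
= m^2+(m-2)^2+\cdots+3^2+1^2
=\sum_{\substack{1\le j\le m\\ j\text{ odd}}} j^2.
\]
The standard closed form for the sum of squares of consecutive odd integers, $\sum_{n=1}^{(m+1)/2}(2n-1)^2=\tfrac{1}{3}N(2N-1)(2N+1)$ with $N=(m+1)/2$, evaluates to $\tfrac{1}{6}m(m+1)(m+2)$, the $m$-th tetrahedral number, as claimed.

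The main obstacle is step (i): verifying rigorously that on the grid the Construction of Theorem \ref{prop:concentriccircles} never introduces leaves, cutvertices, accordions, or party hats, so that the cycles really are the naive concentric squares and each interior graph is again a smaller odd grid that closes the induction. I expect this to follow cleanly from the grid's reflective symmetry together with the Periphery Proposition \ref{prop:PropertyK} — each concentric boundary inherits exactly its two opposite two-valent black corners — but it is where the genuine content resides; the square-count and the odd-square summation identity are then routine.
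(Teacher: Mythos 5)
Your proposal is correct and follows essentially the same route as the paper: apply Theorem \ref{conj:sum} to the concentric square cycles of the grid, count $(m-2(i-1))^2$ faces at each level, and sum the squares of the odd integers up to $m$ to obtain $\tfrac{1}{6}m(m+1)(m+2)$. The only cosmetic difference is that the paper evaluates the sum for a general $m\times n$ grid and then specializes to $n=m$, whereas you work with the square case directly (and you are more explicit than the paper about verifying that the construction of Theorem \ref{prop:concentriccircles} produces exactly the concentric squares with no leaves, accordions, or party hats, which the paper takes as read).
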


\begin{proof}
Let $\min=\min\{m,n\}$.  By Theorem \ref{conj:sum} above, the height of the clock lattice is:
\begin{align*}
  &= \sum_{i=0}^{(1/2)(\min-1)} (m-2i)(n-2i) = \sum_{i=0}^{(1/2)(\min-1)} [mn -2i(m+n)+4i^2] \\
	&= \frac{mn(\min+1)}{2}-\frac{(m+n)(\min-1)(\min+1)}{4}+\frac{\min(\min-1)(\min+1)}{6} \\
  &= \frac{(3m^2n+6mn-m^3+m+3n)}{12}\text{ setting $\min=m$} \\
  &= \frac{(m+1)[3n(m+1)-m(m-1)]}{12} \\
  &= \frac{(m)(m+1)(m+2)}{6}\text{ in the case where $n=m$.}
\end{align*}

Observe that this is the $m$-th tetrahedral number, which is [A000292] on the On-Line Encyclopedia of Integer Sequences \cite{OEIS}.
\end{proof}

A \emph{harmonic curve} is one that admits a parametrization whose three coordinate functions $x=x_1,y=x_2,z=x_3$ are the classical Chebyshev polynomials $T_{x_i}(t)$ defined by $T_n(\cos t)=\cos (nt)$.  A \emph{Chebyshev curve} is one whose third coordinate function $T_{x_3}(t+\varphi)$ has a phase shift.  Identifying the ends of a non-singular harmonic curve, one obtains a \emph{harmonic knot} $H(x_1,x_2,x_3)$ if and only if the three parameters $x_i$ are pairwise coprime integers (Comstock 1897 \cite{Comstock} or see also \cite{KosPec3} or \cite{FF}).

Koseleff and Pecker found in \cite{KosPec1} that the trefoil could be parametrized in such a way, leading them to study harmonic knots in \cite{KosPec4}, \cite{KosPec3}, and \cite{KosPec}.  Harmonic knots are polynomial analogues of the famous Lissajous knots studied in \cite{BDHZ}, \cite{BHJS}, \cite{Crom}, \cite{HosZir}, \cite{JonPrz}, \cite{Lam}, \cite{Lam:dis}; however, the figure-eight knot is not a Lissajous knot but is the harmonic knot $H(3,5,7)$.

The interested reader may find many examples of Chebyshev knot diagrams on the website of Vincent-Pierre Koseleff:  \verb=http://www.math.jussieu.fr/~koseleff/knots/kindex1.html=

\begin{theorem}
\label{thm:cheb}
\cite[Theorem 3]{KosPec3} 
Every knot has a projection which is a Chebyshev plane curve.
\end{theorem}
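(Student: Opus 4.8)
The plan is to prove the slightly stronger statement that every knot $K$ admits a parametrization as a Chebyshev space curve $t\mapsto(T_a(t),T_b(t),T_c(t+\varphi))$; its image under the projection $(x,y,z)\mapsto(x,y)$ is then by definition the Chebyshev plane curve $C(a,b)\colon x=T_a(t),\,y=T_b(t)$, so the resulting diagram of $K$ has a Chebyshev projection. The first reduction is to pass from polynomials to trigonometry: the substitution $t=\cos\theta$ turns $T_n(t)$ into $\cos(n\theta)$, so $C(a,b)$ becomes the Lissajous (billiard) curve $\theta\mapsto(\cos a\theta,\cos b\theta)$, a trajectory inside the square $[-1,1]^2$ that reflects off the vertical and horizontal walls a number of times governed by $a$ and $b$. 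Thus it suffices to realize $K$ as a billiard-type knot whose shadow is such a trajectory, with the height $z=T_c(t+\varphi)$ — a phase-shifted oscillation of frequency $c$ in the trigonometric picture — recording the over/under data at each self-intersection.

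To produce such a realization I would start from a braid presentation. By Alexander's theorem $K=\widehat{\beta}$ is the closure of a braid $\beta$ on $a$ strands, written as a word in the generators $\sigma_i^{\pm1}$. I would lay the $a$ strands along the $a$ horizontal passes of the curve $C(a,b)$, choosing $b$ coprime to $a$ and larger than the length of the braid word, and absorb the braid closure into the reflections of the trajectory at the ends. Each letter $\sigma_i^{\pm1}$ is placed at one transverse double point of $C(a,b)$ between adjacent strands $i$ and $i+1$, and the sign of that crossing is dictated by the value of the third coordinate $z=T_c(t+\varphi)$ at the two parameters meeting there; choosing the phase $\varphi$ and the degree $c$ appropriately lets one prescribe, crossing by crossing, the correct over/under assignment so that the space curve closes up to exactly $K$ rather than to some other diagram sharing the same shadow.

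The technical heart, and the step I expect to be the main obstacle, is the combinatorial analysis of the self-intersections of $C(a,b)$: one must show that its double points occur in a pattern rich enough to host an arbitrary braid word, and that each prescribed crossing corresponds to exactly one transverse double point, with no triple points and no unwanted extra intersections. A double point occurs at parameters $\theta_1\neq\theta_2$ satisfying $\cos a\theta_1=\cos a\theta_2$ and $\cos b\theta_1=\cos b\theta_2$ simultaneously; solving these congruence-type conditions and reading off the cyclic order of the resulting intersection points is exactly what must be matched against the permutation and crossing data of $\beta$. Establishing genericity — transversality of every intersection, arranged by a small perturbation of $\varphi$ together with coprimality of $a$, $b$, and $c$ — and then proving that the freedom in the choice of $b$ and $\varphi$ always suffices to thread an arbitrary braid word through the available double points is where the real work lies; the billiard picture and Alexander's theorem only set the stage.
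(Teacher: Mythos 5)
First, a caveat: the paper does not prove this statement at all. Theorem \ref{thm:cheb} is quoted verbatim from Koseleff--Pecker \cite[Theorem 3]{KosPec3}, and the only commentary offered is that it ``is a consequence of another result in their paper involving bridge number.'' So the comparison here is between your sketch and the Koseleff--Pecker argument that the paper points to, which proceeds via bridge (plat) presentations rather than braid closures.

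That difference is not cosmetic, and it is where your proposal has a genuine gap. You reduce to Alexander's theorem, lay the $a$ braid strands along the $a$ horizontal passes of the billiard trajectory, and propose to ``absorb the braid closure into the reflections of the trajectory at the ends.'' But the Chebyshev curve $C(a,b)$ is a single embedded arc: under $t=\cos\theta$ the reflections at the vertical walls join pass $i$ to pass $i\pm1$ in a zigzag pattern that is completely determined by the billiard dynamics, and the two free endpoints of the arc are joined by one extra strand to close the knot. This pairing of pass-ends is exactly the pairing of a plat closure, not the trace closure $\widehat{\beta}$ that identifies strand $i$ on the right with strand $i$ on the left for every $i$. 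You cannot choose or rearrange this pairing, so the braid normal form is the wrong one to feed into the construction; this is precisely why the actual proof runs through the bridge number (every knot with $\mathrm{br}(K)=m$ has a $2m$-plat presentation, and the turnbacks of the billiard trajectory realize the plat pairing for suitable $a$). Beyond that, the step you yourself flag as ``the technical heart'' --- that the double points of $C(a,b)$, in their cyclic order along the curve, are rich enough to host an arbitrary word in the relevant normal form, with the freedom in $b$ (and $\varphi$, if you also want the space curve) sufficing --- is only named, not argued; as it stands the proposal is a plan whose central combinatorial lemma is missing and whose framing via Alexander's theorem would have to be replaced by a plat/bridge presentation before that lemma could even be stated correctly. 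Note also that for the statement actually asserted here only the projection must be Chebyshev, so the entire discussion of $T_c(t+\varphi)$ and genericity of crossings is extra load you do not need to carry.
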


This is a consequence of another result in their paper involving bridge number, further confirming the suspicions of Conjecture \ref{conj:bridge}.

\begin{remark}
The harmonic knot $H(x_1,x_2,x_3)$ has a balanced overlaid Tait graph $\Gamma$ that is the $(x_1-2)\times(x_2-2)$ grid graph $\Gamma_{x_1-2,x_2-2}$.  Thus every knot has a projection whose balanced overlaid Tait graph $\Gamma$ is a grid graph!
\end{remark}


We conclude with one other application.

\subsection{Relationship with discrete Morse theory}
\label{subsec:DMT}

This subject was introduced in 1995 by Forman (see for example \cite{For}) to apply the power of the classical version to combinatorially defined complexes.  The critical points found in the smooth version can be determined combinatorially by \emph{collapsing} pairs of $i$- and $i+1$-dimensional cells in the complex $C$, and these collapses are described (and ordered) using the following map.

A \emph{discrete Morse function} is a weakly increasing map $f:(C,\subseteq)\rightarrow(\mathbb{Z},\leq)$ such that $|f^{-1}(n)|\leq 2$ for all $n\in\mathbb{Z}$ and such that $f(\sigma)=f(\tau)$ implies that one of $\sigma$, $\tau$ is a face of the other.  A \emph{critical cell} for a discrete Morse function $f$ is face of $C$ at which $f$ is injective.

This subject has been studied in many contexts, including by Chari in \cite{Cha} for shellability, by Babson and Hersh in \cite{BabHer} for lexicographic orders, by Kozlov in \cite{Koz} for free chain complexes, by Welker in \cite{Wel} for free resolutions, by Engstr\"om in \cite{Eng} for Fourier transforms, by Ayala, Fernández, Fernández-Ternero, and Vilches in \cite{AFFTV} for graphs, by Benedetti in \cite{Ben} for homology, and by Salvetti, Gaiffi, and Mori in \cite{SalGai}, \cite{SalGaiMor} for line arrangements and in \cite{SalMor} for configuration spaces.  It was recently the topic of a summer school at the Institut Mittag-Leffler outside of Stockholm, Sweden organized by Benedetti and Engstr\"om.

Consider a 2-complex $\Delta$ of the 2-sphere, not necessarily simplicial, as we allow multiple 1-faces with the same two endpoints, 1-faces with the same single endpoint, and 2-faces of length less than three.  This can be realized as a(n unsigned) plane graph $G$.  Associate to this a knot universe $U$ described above.  Then the face poset $\mathcal{F}(\Delta)$ of the complex can be realized as $\widehat{\Gamma}$.  Following discrete Morse theory, a(n elementary) collapse on $\Delta$ can be realized as an edge in $\widehat{\Gamma}$.  The deletion of the two starred regions in $\widehat{\Gamma}$ to obtain $\Gamma$ corresponds with choosing the two critical cells, one of dimension two and the other of dimension one.

Then a shelling of $\Delta$ with chosen critical cells corresponds with a discrete Morse function $f$ on $\Delta$ with these chosen critical cells, which corresponds with a perfect matching of $\Gamma$, which corresponds with two rooted spanning trees $T$ of $G$ and $T^C$ of the dual graph $G^*$, which correspond with a series of collapses together with a series of \emph{endocollapses} (collapsing in the dual).




\begin{proposition}
\label{prop:DMT}
Perfect matchings on $\Gamma$ constructed from a knot diagram $D$ correspond to discrete Morse functions on the 2-complex $\Delta$ of the 2-sphere.
\end{proposition}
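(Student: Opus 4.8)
The plan is to present this as an instance of the standard correspondence in discrete Morse theory between discrete Morse functions and acyclic matchings on the Hasse diagram of the face poset, due to Forman \cite{For} and reformulated combinatorially by Chari \cite{Cha} (see also Kozlov \cite{Koz}). First I would make precise the identification, already sketched in the discussion above, of the overlaid Tait graph $\widehat{\Gamma}$ with the covering-relation Hasse diagram of the face poset $\mathcal{F}(\Delta)$. Under the construction of $\widehat{\Gamma}$, the black vertices are exactly the $1$-cells of $\Delta$ (the edges of $G$), while the white vertices split into the $0$-cells (the vertices of $G$) together with the $2$-cells (the faces of $G$, equivalently the vertices of $G^*$); each edge of $\widehat{\Gamma}$ is a half-edge of a Tait graph and so records an incidence between a cell and one of its codimension-one faces. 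Thus the edges of $\widehat{\Gamma}$ are precisely the covering relations between consecutive dimensions, and a matching of $\widehat{\Gamma}$ is the same datum as a discrete vector field on $\Delta$, with the unmatched cells playing the role of critical cells in the sense of the definition above.

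Next I would translate the perfect-matching condition. A perfect matching $\mu$ of $\Gamma$ is a matching of $\widehat{\Gamma}$ that saturates every vertex except the two deleted starred white vertices, so every cell of $\Delta$ is paired save for the two starred faces, which are exactly the critical cells. Because the two starred regions are adjacent, they have opposite checkerboard colour, so one is a vertex of $G$ and the other a vertex of $G^*$; hence the two critical cells have dimensions $0$ and $2$, the complementary pair forced by the Betti numbers of $S^2$. Consequently every perfect matching of $\Gamma$ determines a discrete vector field with precisely this prescribed pair of critical cells, and conversely any vector field on $\Delta$ pairing all cells except these two restricts to a perfect matching of $\Gamma$.

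The substantive step, and the one I expect to be the main obstacle, is \emph{acyclicity}: by Forman's theorem the vector field determined by $\mu$ arises from a genuine discrete Morse function precisely when it is a Morse matching, admitting no nontrivial closed $V$-path. Since $V$-paths respect dimension, a closed $V$-path is either of type $(0,1)$ or of type $(1,2)$. I would rule both out using the spanning-tree description of \cite[Proposition 4.8]{Co:jones} already invoked in the proof of Theorem \ref{thm:elementary}: the matching $\mu$ partitions the edges of $G$ into those paired downward to a $0$-cell and those paired upward to a $2$-cell, and this partition is exactly a rooted spanning tree $T$ of $G$ (rooted at the critical $0$-cell) together with the complementary rooted spanning tree $T^C$ of $G^*$ (rooted at the critical $2$-cell). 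A closed $V$-path of type $(0,1)$ would trace a cycle among the downward-paired edges, hence a cycle in $T$, while one of type $(1,2)$ would trace a cycle among the upward-paired edges, hence a cycle in $T^C$. As $T$ and $T^C$ are trees, neither cycle can occur, so $\mu$ is a Morse matching.

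Finally I would assemble the equivalence. Chari's theorem \cite{Cha} states that Morse matchings on $\mathcal{F}(\Delta)$ are in bijection with discrete Morse functions up to the standard equivalence preserving the induced matching and critical set. Combining this with the identification of the previous paragraphs yields the desired correspondence between perfect matchings of $\Gamma$ and discrete Morse functions on the $2$-complex $\Delta$ of the $2$-sphere carrying the two chosen critical cells, completing the proof.
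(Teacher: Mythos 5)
Your proof is correct and rests on the same basic identification as the paper's --- the $1$-skeleton of $\Delta$ is the Tait graph $G$, the overlaid Tait graph $\widehat{\Gamma}$ is the Hasse diagram of the face poset $\mathcal{F}(\Delta)$, and a perfect matching of $\Gamma$ is a discrete vector field saturating every cell except the two starred ones --- but you go considerably further than the paper does. The paper's proof is a single sentence asserting that the correspondence ``follows by construction,'' and it never addresses the one point that actually requires an argument: that the vector field induced by a perfect matching is \emph{acyclic}, so that it genuinely arises from a discrete Morse function rather than merely being a discrete vector field. You correctly isolate this as the substantive step and dispose of it by routing through the spanning-tree correspondence of \cite[Proposition 4.8]{Co:jones}: a closed $V$-path of type $(0,1)$ would be a non-backtracking closed walk in the rooted spanning tree $T$ of $G$, and one of type $(1,2)$ a non-backtracking closed walk in the complementary tree $T^C$ of $G^*$, neither of which can exist. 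That argument is sound (the no-backtracking condition follows because consecutive cells in a $V$-path are matched to distinct partners), and it is exactly what is needed to invoke Chari's reformulation. Two further points in your favor: you are explicit that the correspondence is with discrete Morse functions only up to the standard equivalence of inducing the same matching and critical set, which the paper leaves implicit; and you correctly identify the two critical cells as having dimensions $0$ and $2$ (forced by $c_i\geq b_i(S^2)$), whereas the paper's preceding discussion asserts dimensions two and one, which cannot occur for $S^2$. In short, your proof supplies the content that the paper's proof of Proposition \ref{prop:DMT} waves at.
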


\begin{proof}
This follows by construction with the Tait graph $G$ coming from the 1-skeleton of $\Delta$ and with $\widehat{\Gamma}$ as the face poset $\mathcal{F}(\Delta)$ of the complex.
\end{proof}


\bibliographystyle{amsalpha}
\bibliography{12OctBibliography}

\end{document}